\newcommand{\Names}{Fabio Durastante, Isabella Furci}
\newcommand{\Title}{Spectral Analysis of Saddle--point Matrices from Optimization problems with Elliptic PDE Constraints}
\newtheorem{Theorem}{Theorem}
\newtheorem{Lemma}{Lemma}
\newtheorem{Proposition}{Proposition}
\theoremstyle{definition}
\newtheorem{Definition}{Definition}
\newtheorem{Remark}{Remark}
\tikzstyle{none}=[inner sep=0pt]
\tikzstyle{wh}=[circle,fill=White,draw=Black,line width=0.8 pt]
\tikzstyle{rn}=[circle,fill=Red,draw=Black,line width=0.8 pt]
\tikzstyle{gn}=[circle,fill=Lime,draw=Black,line width=0.8 pt]
\tikzstyle{yn}=[circle,fill=Yellow,draw=Black,line width=0.8 pt]
\tikzstyle{simple}=[-,draw=Black,line width=2.000]
\tikzstyle{arrow}=[-,draw=Black,postaction={decorate},decoration={markings,mark=at position .5 with {\arrow{>}}},line width=2.000]
\tikzstyle{tick}=[-,draw=Black,postaction={decorate},decoration={markings,mark=at position .5 with {\draw (0,-0.1) -- (0,0.1);}},line width=2.000]
\newcommand{\bx}[1]{{\bf #1}}
\DeclareMathOperator*{\diag}{diag}
\DeclareMathOperator*{\esssup}{ess\,sup}
\DeclareMathOperator*{\essinf}{ess\,inf}
\begin{document}

\bibliographystyle{plain}

\setcounter{page}{1}

\thispagestyle{empty}

 \title{Spectral Analysis of Saddle--point Matrices from Optimization problems with Elliptic PDE Constraints\thanks{This work was partially supported by INdAM-GNCS
 project ``Tecniche innovative per problemi di algebra
 lineare'' (2018), and by the Tor Vergata University ``MISSION: SUSTAINABILITY'' project
 “NUMnoSIDS”, CUP E86C18000530005.}}

\author{
Fabio Durastante\thanks{Istituto per le Applicazioni del Calcolo ``Mauro Picone''. Consiglio Nazionale delle Ricerche, Napoli, Italy.
(f.durastante@na.iac.cnr.it).}
\and
Isabella Furci\thanks{Department of Mathematics and Informatics. University of Wuppertal, Wuppertal, Germany. (furci@uni-wuppertal.de).}}

\markboth{\Names}{\Title}

\maketitle

\begin{abstract}
The main focus of this paper is the characterization and exploitation of the asymptotic spectrum of the saddle--point matrix sequences arising from the discretization of optimization problems constrained by elliptic partial differential equations. We uncover the existence of an hidden structure in these matrix sequences, namely, we show that these are indeed an example of Generalized Locally Toeplitz (GLT) sequences. We show that this enables a sharper characterization of the spectral properties of such sequences than the one that is available by using only the fact that we deal with saddle--point matrices. Finally we exploit it to propose an optimal preconditioner strategy for the GMRES, and Flexible--GMRES methods.
\end{abstract}

\begin{keywords}
Saddle--point matrices, Optimal control, GLT theory, Preconditioning
\end{keywords}
\begin{AMS}
62M15, 65F08, 15B05 
\end{AMS}

\section{Introduction}

Linear systems with saddle--point matrices arises in a wide context of applications and have attracted a great deal of attention~\cite{benzi2005numerical,MR3564863}. In general form, they can be simply stated as the family of linear systems where the left--hand side is given by a block--matrices of the form
\begin{equation}\label{eq:the_saddle}
\mathcal{A}_N = \begin{bmatrix}
A & B_1^T \\
B_2 & -C
\end{bmatrix}, \qquad A \in \mathbb{R}^{q \times q}, B_1,B_2 \in \mathbb{R}^{p \times q}, \, C \in \mathbb{R}^{p \times p}. 
\end{equation}
We are interested here in the analysis of their spectral properties in the very specific context of the discretized version of optimal constraint problems~\cite{troltzsch2010optimal}
\begin{equation}\label{eq:thegeneralproblem}
\left\lbrace\begin{array}{rl}
\displaystyle \min_{y,u} J(y,u) = & \displaystyle \frac{1}{2}\|y - y_d\|_{L^2(\Omega)}^2  + \frac{\alpha}{2} \|u\|_{L^2(\Omega)}^2,\\
\text{ such that } & \begin{array}{ll}
e(y,u) = 0, & \text{ in } \Omega,\\
y = f, & \text{ on } \partial\Omega_D,\\
\frac{\partial y}{\partial \mathbf{n}} = g, & \text{ on } \partial\Omega_N,\\
\end{array}
\end{array}\right.
\end{equation}
where, $\alpha > 0$ is a fixed constant that acts as a Tikhonov regularization parameter, $J$ is a cost functional, $\Omega \subset \mathbb{R}^d$ is the domain of both the state $y$ and the control $u$, and $\partial\Omega_D$ and $\partial\Omega_N$ are two disjoint sets that represent the Dirichlet and Neumann boundary respectively and have the whole boundary as union. 

Spectral properties of the general case~\eqref{eq:the_saddle} have been indeed thoroughly  analyzed~\cite{MR1168084,murphy2000note,benzi2006eigenvalues,MR2220678,MR1802361,gould2009spectral,sesana2013spectral,bergamaschi2012eigenvalue} under several hypotheses on the blocks of $\mathcal{A}_N$, e.g., $B_1 = B_2 = B$, $C$ semipositive definite, $A$ symmetric and positive definite, and so on. {The goal of the latter works has been to provide a} sharp localization bounds for their spectrum, and exploit them to devise efficient iterative solvers for such problems. Here we focus on a less general objective, i.e., we intend to exploit finer information on the structure of the blocks of~\eqref{eq:the_saddle}, a knowledge coming from the coupling of the source problem~\eqref{eq:thegeneralproblem} and its discretization, to give an asymptotic description of the spectrum of the matrices~$\{\mathcal{A}_N\}_N$. Specifically, we show that the saddle--point form of $\mathcal{A}_N$ obtained from~\eqref{eq:the_saddle} hides inside another structure, namely, that the sequence of matrices $\{\mathcal{A}_N\}_N$ is a Generalized Locally Toeplitz (GLT) sequence~\cite{glt1,SerraLibro1}. This enables us to obtain a sharper localization of its asymptotic spectrum. Furthermore, we use this characterization to suggest an effective preconditioning strategy for such problems. We stress that an approach of this type has already been exploited for both the saddle--point matrices obtained from a two--dimensional linear elasticity--type problem in~\cite{MR3689933}, and partially explored in~\cite{MR3711990,cipolladurastante} for a constrained optimization problem where the constraints $e(y,u)$ were Fractional Differential Equations.

The paper is therefore divided as follows, in Section~\ref{sec:discretization} we describe the discrete form of~\eqref{eq:thegeneralproblem} fully specifying the sequence of matrices $\{\mathcal{A}_N\}_N$. In Section~\ref{sec:spectralanalysis} we recall the essential tools needed for working with GLT sequences and apply them to our problem, while in Section~\ref{sec:efficientsolution} we exploit them to devise an efficient preconditioning strategy. {In Section~\ref{sec:numerical_experiments} we substantiate our claims with some numerical examples, and give conclusions in Section~\ref{sec:conclusion_and_future_developments}.}

\section{From the Continuous Problem to the Saddle--point sequence $\{\mathcal{A}_N\}_N$}
\label{sec:discretization}

The first point we need to answer is how we obtain the sequence of saddle--point matrices from~\eqref{eq:thegeneralproblem}, indeed a way of doing so is going through its Langrangian formulation. Thus, we find the Lagrangian of~\eqref{eq:thegeneralproblem} as
\begin{equation}\label{eq:thelagrangian}
\mathcal{L}(y,u,p) = J(y,u) - \langle p, e(y,u) \rangle_{W^*,W},
\end{equation}
where $e(y,u)$ represents the PDE constraint as an operator between the Banach spaces $Y\times U$ and $W$, and $p$ is the Adjoint status between the space $W$ and its dual $W^{*}$ acting as Lagrange multiplier. Indeed, a solution for the original constrained optimization problem~\eqref{eq:thegeneralproblem} is a stationary point for the Lagrangian~\eqref{eq:thelagrangian}. To obtain such stationary point $(\hat{y},\hat{u},\hat{p}) \in Y\times U\times W^{*}$ we require that the G\^ateaux derivative with respect to each of the variables of~\eqref{eq:thelagrangian} is zero, i.e.,
\begin{equation*}
\begin{aligned}
\mathcal{L}'_y(\hat{y},\hat{u},\hat{p})\mathfrak{h} =&\, J'_y(\hat{y},\hat{u})\mathfrak{h}  - \langle \hat{p}, e'_y(\hat{y},\hat{u})\mathfrak{h}  \rangle_{W^*,W} = 0, &\qquad \forall \,\mathfrak{h}  \in Y,\\
\mathcal{L}'_u(\hat{y},\hat{u},\hat{p})\mathfrak{w}  =&\, J'_u(\hat{y},\hat{u})\mathfrak{w}  - \langle \hat{p}, e'_u(\hat{y},\hat{u})\mathfrak{w}  \rangle_{W^*,W} = 0, &\qquad \forall \,\mathfrak{w}  \in U,\\
\mathcal{L}'_p(\hat{y},\hat{u},\hat{p}) =&\, e(\hat{y},\hat{u}) = 0.
\end{aligned}
\end{equation*}
These are called, in general, the first order optimality conditions or the Karush-Kuhn-Tucker conditions (KKT-conditions) for Problem~\eqref{eq:thegeneralproblem}. Finally, for obtaining such characterization we have to fully specify the operator $e(y,u)$, and consequently all the functional spaces $Y,U$, and $W$. The  {prototypical elliptic} problem in this class is represented by the Poisson distributed control
\begin{equation}\label{eq:theproblem}
\left\lbrace\begin{array}{rl}
\displaystyle \min_{y,u} J(y,u) = & \displaystyle \frac{1}{2}\|y - y_d\|_{L^2(\Omega)}^2  + \frac{\alpha}{2} \|u\|_{L^2(\Omega)}^2,\\
\text{ such that } & \begin{array}{ll}
- \nabla^2 y = u + z, & \text{ in } \Omega,\\
y = f, & \text{ on } \partial\Omega_D,\\
\frac{\partial y}{\partial \mathbf{n}} = g, & \text{ on } \partial\Omega_N,\\
\end{array}
\end{array}\right.
\end{equation}
{where $z$ represents the forcing term.}

The KKT conditions for problem~\eqref{eq:theproblem} are expressed as
\begin{equation}\label{eq:kkt-conditions}
\begin{array}{ll}
\displaystyle\left\lbrace\begin{array}{ll}
- \nabla^2 y = u + z, & \text{ in } \Omega,\\
y = f, & \text{ on } \partial\Omega_D,\\
\frac{\partial y}{\partial \mathbf{n}} = g, & \text{ on } \partial\Omega_N.\\
\end{array}\right. & \qquad \text{(State equation)}\\\\
\displaystyle\left\lbrace\begin{array}{ll}
- \nabla^2 p = y-y_d, & \text{ in } \Omega,\\
y = 0, & \text{ on } \partial\Omega_D,\\
\frac{\partial y}{\partial \mathbf{n}} = 0, & \text{ on } \partial\Omega_N.\\
\end{array}\right. & \qquad \text{(Adjoint equation)}\\\\
\displaystyle\alpha u + p = 0. & \qquad \text{(Gradient condition)}
\end{array}
\end{equation}
By posing $\hat{p} = -p$ and choosing $v \in H^{1}_0(\Omega)$ we can rewrite conditions~\eqref{eq:kkt-conditions} in weak form~as:
\begin{align}
\int_{\Omega} \nabla u \cdot \nabla v \,dx = & \int_{\Omega} u v \,dx,+ \int_{\Omega} z v \,dx,\nonumber\\
\int_{\Omega} \nabla \hat{p} \cdot \nabla v \,dx = & \int_{\Omega} (y_d - y)v \,dx,\label{eq:integral_optimality_system}\\
\alpha \int_{\Omega} u v \,dx - \int_{\Omega} \hat{p} v \,dx = & 0.\nonumber		
\end{align}
Finally, the sequence $\{\mathcal{A}_N\}$ is obtained by fixing a Finite Element (FEM) approximation of the optimality system~\eqref{eq:integral_optimality_system}. This means fixing a space $V_{0,\mathbf{n}}(\Omega_\mathbf{n})$  with $V_{0,\mathbf{n}} = \operatorname{Span}\{\phi_1,\ldots,\phi_{N\mathbf{(n)}}\} \subset H^{1}_0(\Omega)$ over a mesh $\Omega_\mathbf{n}$ on the domain $\Omega$ thus obtaining the linear system
\begin{equation}\label{eq:the_linear_system}
\bar{\mathcal{A}}_N\mathbf{x} \equiv \left[\begin{array}{cc|c}
\bar{M} & O & \bar{K}^T \\
&    & \\
O & \alpha \bar{M} & - \bar{M} \\
&    & \\
[-0.1em]      \hline        &    & \\

\bar{K} & -\bar{M} & O
\end{array}\right]\begin{bmatrix}
\mathbf{y} \\ 
\\
\mathbf{u} \\ 
\\
\\
\mathbf{p}
\end{bmatrix} = \begin{bmatrix}
M \mathbf{y}_d \\ 
\\
\mathbf{0} \\
\\
\mathbf{z}
\end{bmatrix} \equiv \bar{\mathbf{b}},
\end{equation}
where
\begin{equation}\label{eq:thematrixblocks}
(\bar{M})_{i,j} = \int_{\tau_h} \phi_i \phi_j d\mathbf{x}, \qquad (\bar{K})_{i,j} = \int_{\tau_h} \nabla \phi_i \cdot \nabla \phi_j d\mathbf{x}, 
\end{equation}
are the usual (scaled)  {mass} and  {stiffness} matrices, and $O$ is the zero matrix of order $N(\textbf{n})= n_1n_2 \ldots n_d$.

\subsection{Triangular Lagrangian Elements}

To completely specify the linear system~\eqref{eq:the_linear_system} we need to precise both the mesh $\Omega_{N(\mathbf{n})}$ and the basis functions $\{\phi_j\}_{j=1}^{N(\mathbf{n})}$, i.e., chose the  {element} defining our discretization. We focus here on {nodal Lagrangian elements~\cite[Chapter~5]{MR2322235} of degree $p$}. These are built starting from $\mathbb{P}_p$, the vector space of polynomials $q(x_1,x_2)$ with scalar coefficients of $\mathbb{R}^2$ in $\mathbb{R}$ of degree less than or equal to $p$, 
\begin{equation*}
\mathbb{P}_p = \left\lbrace q(x_1,x_2) = \sum_{0 \leq i + j \leq p}c_{i,j} x_1^{i}x_2^{j}, \quad c_{i,j} \in \mathbb{R} \right\rbrace.
\end{equation*}
That is indeed a vector space of dimension $\dim \mathbb{P}_p = \frac{1}{2}(p+1)(p+2)$. Then an homogeneous triangulation $\Omega_{N(\mathbf{n})}$ of the unit square domain $\Omega = [0,1]^2$ is considered, i.e., a mesh consisting in 2D triangular cells $\tau_h$ with straight sides, and a lattice $\Sigma_p$ of nodes $\{\mathbf{N}_i\}_{i=1}^{\dim \mathbb{P}_p}$ on each triangle; see Figure~\ref{fig:triangular_elements}. 
\begin{figure}[htbp]
	\centering
	\subfloat[$p=1$]{\begin{tikzpicture}[scale = 0.6]
		\begin{pgfonlayer}{nodelayer}
		\node [style=wh] (0) at (-13, 5) {\small 3};
		\node [style=wh] (1) at (-13, -0) {\small 1};
		\node [style=wh] (2) at (-8, -0) {\small 2};
		\end{pgfonlayer}
		\begin{pgfonlayer}{edgelayer}
		\draw [style=simple] (0) to (1);
		\draw [style=simple] (1) to (2);
		\draw [style=simple] (2) to (0);
		\end{pgfonlayer}
		\end{tikzpicture}}\hfill
	\subfloat[$p=2$]{
		\begin{tikzpicture}[scale = 0.6]
		\begin{pgfonlayer}{nodelayer}
		\node [style=wh] (0) at (-13, 5) {\small 3};
		\node [style=wh] (1) at (-13, -0) {\small 1};
		\node [style=wh] (2) at (-8, -0) {\small 2};
		\node [style=wh] (3) at (-10.75, -0) {\small 4};
		\node [style=wh] (4) at (-13, 2.5) {\small 6};
		\node [style=wh] (5) at (-10.5, 2.5) {\small 5};
		\end{pgfonlayer}
		\begin{pgfonlayer}{edgelayer}
		\draw [style=simple] (0) to (1);
		\draw [style=simple] (1) to (2);
		\draw [style=simple] (2) to (0);
		\end{pgfonlayer}
		\end{tikzpicture}}\hfill
	\subfloat[$p=3$]{
		\begin{tikzpicture}[scale = 0.6]
		\begin{pgfonlayer}{nodelayer}
		\node [style=wh] (0) at (-13, 5) {\small 3};
		\node [style=wh] (1) at (-13, -0) {\small 1};
		\node [style=wh] (2) at (-8, -0) {\small 2};
		\node [style=wh] (3) at (-11.25, -0) {\small 4};
		\node [style=wh] (4) at (-13, 1.5) {\small 9};
		\node [style=wh] (5) at (-9.5, 1.5) {\small 6};
		\node [style=wh] (6) at (-9.5, -0) {\small 5};
		\node [style=wh] (7) at (-13, 3.25) {\small 8};
		\node [style=wh] (8) at (-11.25, 3.25) {\small 7};
		\node [style=wh] (9) at (-11.25, 1.5) {\small 10};
		\end{pgfonlayer}
		\begin{pgfonlayer}{edgelayer}
		\draw [style=simple] (0) to (1);
		\draw [style=simple] (1) to (2);
		\draw [style=simple] (2) to (0);
		\end{pgfonlayer}
		\end{tikzpicture}
	}
	\caption{Nodes $\mathbf{N}_i$ for the linear ($p=1$), quadratic $(p=2)$ and cubic ($p=3$) Lagrange polynomials on a  triangle}\label{fig:triangular_elements}
\end{figure}
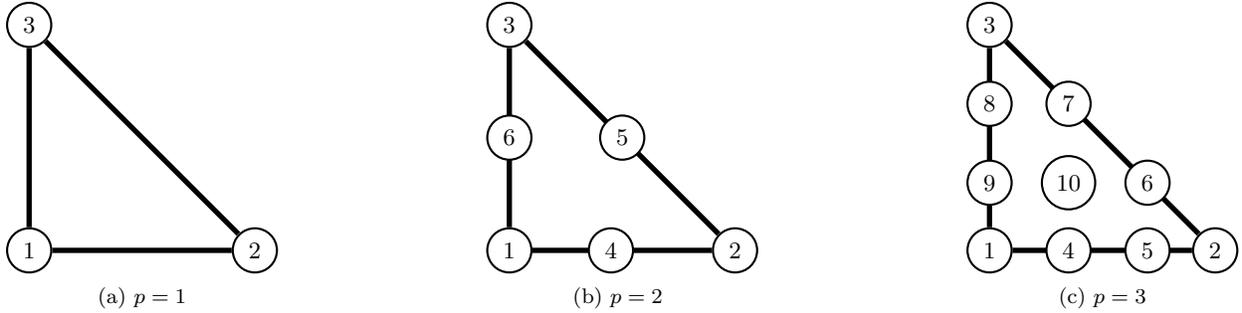

By this construction, every polynomial $q \in \mathbb{P}_p$ is uniquely determined by its values at the points $\{\mathbf{N}_i\}_{i=1}^{\dim \mathbb{P}_p}$. The finite element method
for triangular Lagrange $\mathbb{P}_p$ elements is then built on the discrete finite dimensional space
\begin{equation*}
V_\mathbf{n}^{p} = \{v \in \mathcal{C}^{0}(\Omega) \, v|_{\tau_h} \in \mathbb{P}_p, \quad \tau_h \in \Omega_{N(\mathbf{n})} \} \subset H^{1},
\end{equation*}
and its subspace
\begin{equation*}
V_{0,\mathbf{n}}^{p} = \{v \in V_\mathbf{n}^{p}, \quad v = 0\text{ on }\partial \Omega \} \subset H^{1}_0.
\end{equation*}	
We call  {degrees of freedom} of a function $v \in  V_\mathbf{n}^{p}$ the set of the values of $v$ at the nodes~$\mathbf{N}_j$ on the entire mesh, then the space $V_{0,\mathbf{n}}^{p}$ has exactly the dimension corresponding to the number of  {internal degrees of freedom}, i.e., excluding the nodes on $\partial \Omega$. For our model grid we find that the degrees of freedom are $N( \mathbf{n}) =n_1n_2= (p n_x+1)(p n_y + 1)$, where $n_x$ and $n_y$ are the number of elements in the $x$ and $y$ direction, respectively.  {{Thus} the dimension $N$ of the matrix in~\eqref{eq:thematrixblocks} will be equal to $3N(\mathbf{n})$}. The matrices~\eqref{eq:thematrixblocks} are then constructed by means of the opportune Gauss quadrature formulas, and in terms of the Lagrange basis functions $\{\phi_i\}_{i=1}^{N(\mathbf{n})}$. For all the discussion, and computation in the paper we deal with the matrices generated for such elements by the FEniCS library (v.2018.1.0)~\cite{AlnaesBlechta2015a,LoggOlgaardEtAl2012a}.

\section{Spectral analysis of the resulting sequence of saddle point matrices}
\label{sec:spectralanalysis}

This section is devoted to the attainment of a characterization of the spectra of a suitable scaling $\{{\mathcal{A}}_N\}_N$ of the sequence of matrices $\{\bar{\mathcal{A}}_N\}_N$ in~\eqref{eq:the_linear_system}. Specifically, we are going to answer to the following questions,
\begin{enumerate}
	\item[\emph{Q1}] can we individuate some (possibly sharp) intervals containing the spectrum with respect to $N$?
	\item[\emph{Q2}] For a given $N$ how many eigenvalues are in each interval?
	\item[\emph{Q3}] What is the relation between the condition number of a suitably preconditioned matrix sequence and the value of the regularization parameter $\alpha$?
\end{enumerate}
As we mentioned in the introduction, there exist classical localization results for the eigenvalues of a symmetric saddle--point matrix, like the $\mathcal{A}_N$ in~\eqref{eq:the_saddle}.
\begin{Theorem}[Rusten and Winther~\cite{MR1168084}]\label{thm:rusten_winther} 
	Given~$\mathcal{A}_N$ in~\eqref{eq:the_saddle}, assume $A$  is symmetric and positive definite, $B_1 = B_2 = B$ has full rank, and $C=0$. Let $\mu_1$ and $\mu_n$ denote the largest and smallest eigenvalues of $A$, and let $\sigma_1$ and $\sigma_m$ denote the largest and smallest singular values of $B$. Then the spectrum of $\mathcal{A}_N$ is contained in 
	\begin{equation*}
	I^{-} \cup I^{+},
	\end{equation*}
	where 
	\begin{align*}
	I^{-} =  \left[ \frac{1}{2}\left(\mu_n - \sqrt{\mu_n^2 + 4\sigma_1^2}\right); \frac{1}{2}\left(\mu_1 - \sqrt{\mu_1^2 + 4\sigma_m^2}\right) \right],
	I^{+} =  \left[\mu_n ; \frac{1}{2} \left(\mu_1 + \sqrt{\mu_1^2 + 4\sigma_1^2}\right)\right].
	\end{align*}
\end{Theorem}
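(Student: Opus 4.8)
This is a classical localization result; the plan below reconstructs its proof directly from the eigenvalue equation. Since $B_1=B_2=B$, $C=0$ and $A=A^T$, the matrix $\mathcal{A}_N$ is symmetric, so its spectrum is real and it suffices to show that every positive eigenvalue lies in $I^+$ and every negative eigenvalue lies in $I^-$. Let $\lambda$ be an eigenvalue with eigenvector $(x^T,y^T)^T\neq 0$, so that $Ax+B^Ty=\lambda x$ and $Bx=\lambda y$. Two preliminary reductions are recorded first: $x\neq 0$, because $x=0$ would force $B^Ty=0$, hence $y=0$ since $B$ has full row rank (so $B^T$ is injective); and $\lambda\neq 0$, because $\lambda=0$ gives $Bx=0$ together with $Ax=-B^Ty$, whence $x^TAx=-(Bx)^Ty=0$ and then $x=0$ by positive definiteness of $A$ --- a contradiction. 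Thus $\lambda\neq 0$ and $x\neq 0$, and one may normalize $\|x\|_2=1$.

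For a positive eigenvalue I would eliminate $y$. From $y=\lambda^{-1}Bx$, substitution into the first equation and testing with $x^T$ yield the scalar identity
\begin{equation*}
\lambda^2-a\lambda-b=0,\qquad a:=x^TAx\in[\mu_n,\mu_1],\qquad b:=\|Bx\|_2^2\in[0,\sigma_1^2],
\end{equation*}
the ranges of $a$ and $b$ being the Courant--Fischer bounds for the extreme eigenvalues of $A$ and for the extreme singular values of $B$. A positive $\lambda$ must be the root $\lambda=\tfrac12\big(a+\sqrt{a^2+4b}\big)$, an expression that is nondecreasing in $a\ge 0$ and in $b\ge 0$; hence $\lambda\ge\tfrac12(a+a)=a\ge\mu_n$ and $\lambda\le\tfrac12\big(\mu_1+\sqrt{\mu_1^2+4\sigma_1^2}\big)$, i.e.\ $\lambda\in I^+$.

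For a negative eigenvalue the efficient route is a Schur-complement-type reduction onto the $y$-block. If $\lambda<0$ then $\lambda\notin\operatorname{spec}(A)$ and $A-\lambda I\succ 0$; the first equation gives $x=-(A-\lambda I)^{-1}B^Ty$, and substitution into $Bx=\lambda y$ produces $-B(A-\lambda I)^{-1}B^Ty=\lambda y$ with $y\neq 0$. Normalizing $\|y\|_2=1$ and setting $w:=B^Ty$ gives $\lambda=-w^T(A-\lambda I)^{-1}w$. The eigenvalues of $A-\lambda I$ lie in $[\mu_n-\lambda,\mu_1-\lambda]\subset(0,\infty)$, so those of $(A-\lambda I)^{-1}$ lie in $\big[(\mu_1-\lambda)^{-1},(\mu_n-\lambda)^{-1}\big]$, while $\|w\|_2^2=y^TBB^Ty\in[\sigma_m^2,\sigma_1^2]$. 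Combining these leads to
\begin{equation*}
-\frac{\sigma_1^2}{\mu_n-\lambda}\ \le\ \lambda\ \le\ -\frac{\sigma_m^2}{\mu_1-\lambda},
\end{equation*}
and clearing the (positive) denominators $\mu_n-\lambda$ and $\mu_1-\lambda$ and rearranging gives $\lambda^2-\mu_n\lambda-\sigma_1^2\le 0$ and $\lambda^2-\mu_1\lambda-\sigma_m^2\ge 0$. Since $\lambda<0$, the first forces $\lambda\ge\tfrac12\big(\mu_n-\sqrt{\mu_n^2+4\sigma_1^2}\big)$ and the second forces $\lambda\le\tfrac12\big(\mu_1-\sqrt{\mu_1^2+4\sigma_m^2}\big)$, i.e.\ $\lambda\in I^-$. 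Together with the positive case this gives the stated inclusion.

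The step I expect to be the crux is the negative-eigenvalue bound, specifically the upper bound involving the smallest singular value $\sigma_m$. Reusing the scalar quadratic $\lambda^2-a\lambda-b=0$ does not work there, because $b=\|Bx\|_2^2$ has no useful lower bound --- it vanishes as soon as $x$ has a component in $\ker B$ --- so the full-rank hypothesis on $B$ has to be brought in through the reduced operator $-B(A-\lambda I)^{-1}B^T$ acting on $y$ (where it appears as $BB^T\succeq\sigma_m^2 I$) rather than through $x$. The remaining difficulty is bookkeeping: keeping the direction of every inequality correct when multiplying through by $\mu_i-\lambda$ and by $-1$, and checking that among $|\lambda-\mu_n|$ and $|\lambda-\mu_1|$ the right one is used as the extremal eigenvalue of $A-\lambda I$ in each of the two bounds.
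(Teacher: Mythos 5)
Your proof is correct, and the argument is essentially the classical Rusten--Winther one: for positive $\lambda$ eliminate $y$ to obtain the scalar quadratic $\lambda^2 - a\lambda - b = 0$ with $a = x^TAx$ and $b = \|Bx\|^2$, then bound $a$ and $b$ by Courant--Fischer; for negative $\lambda$ invert $A - \lambda I$, reduce to $-B(A-\lambda I)^{-1}B^T y = \lambda y$, and use the Rayleigh-quotient bounds on $(A-\lambda I)^{-1}$ together with $BB^T \succeq \sigma_m^2 I$. The preliminary reductions ($x \neq 0$, $\lambda \neq 0$) and the sign bookkeeping when clearing the positive denominators $\mu_n - \lambda$, $\mu_1 - \lambda$ and selecting the negative root of each quadratic are all handled correctly, including the observation you flag yourself: that the $\sigma_m^2$ lower bound can only enter through the $y$-block reduction, since $\|Bx\|^2$ has no useful lower bound once $x$ has a component in $\ker B$. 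Note that the paper simply cites this theorem from Rusten and Winther without reproducing a proof, so there is no in-paper argument to compare against; your reconstruction matches the standard one found in that reference and in the survey literature on saddle-point systems.
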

This bound is indeed very general and versatile, since it requires only information on the symmetry/defini\-teness of the diagonal blocks, and on the rank of the extradiagonal ones.  {It can be used to obtain an estimate of the condition number of $\mathcal{A}_N$ as function of $N$ in a straightforward way. To this end, an even sharper result can be obtained by means of \cite[Theorem~1(c)]{MR2220678} that permits to characterize exactly the eigenvalues with the largest and the smallest module.} Nevertheless, by exploiting further information on the blocks, we show that finer answers to our question are indeed possible. Specifically, we are going to individuate three disjoint intervals $I_0^{-}$, $I_1^{+}$, and $I_2^{+}$ containing the spectrum of the scaled version of $\bar{\mathcal{A}}_N$ {, we show that this choice is not arbitrary, and that it stems directly from the structure of the problem, and the selection of the discretization scheme.}

In Section~\ref{sec:background_and_definitions}, we start recalling the tools we use, and then we deploy them to achieve these results in Section~\ref{Study_BN_perm}.

\subsection{Background and definitions}
\label{sec:background_and_definitions}

Throughout this paper, we use the following notation. Let $\mathbb{C}^{s\times s}$ be the linear space of the complex $s\times s$ matrices and let $\textbf{f}:G\to\mathbb{C}^{s\times s}$, with $G\subseteq\mathbb R^\ell$, $\ell\ge 1$, measurable set. We say that $\textbf{f}$ belongs to $L^1(G)$ (resp. is measurable) if all its components $\textit{f}_{ij}:G\to\mathbb C,\ i,j=1,\ldots,s,$ belong to $L^1(G)$ (resp. are measurable).
We denote by {$\mathcal{I}_d$ the $d$-dimensional
cube $(-\pi,\pi)^d$} and define $L^1(d,s)$ as the linear space of $d$-variate functions $\textbf{f}:\mathcal{I}_d\to \mathbb{C}^{s\times s}$, $\textbf{f}\in
L^1(\mathcal{I}_d)$.

Moreover we indicate by $\{\mathcal{A}_{N}\}_{{\bf n}\in\mathbb{N}^d}$, or simply $\{\mathcal{A}_{N}\}_{{\bf n}}$, the matrix sequence whose elements are the matrices $\mathcal{A}_{N}$ of dimensions $N \times N = N(s,\textbf{n})\times N(s,\textbf{n})$, with $N(s,\textbf{n})=sN(\textbf{n})=s n_1n_2 \ldots n_d$,  $\textbf{n}=(n_1,n_2,\ldots,n_d)$.

\begin{Definition}\label{def-multilevel}
	Let the Fourier coefficients of a given function $\textbf{f}\in L^1(d,s)$ be defined as
	\begin{align}\label{fhat}
	\hat {\textbf{f}_{\bf j}}:=\frac1{(2\pi)^d}\int_{ {\mathcal{I}_d}}\textbf{f}(\boldsymbol{\theta})\, {\rm e}^{\, -{ {\iota}}\,\left\langle \,{\bf j}\,,\,\boldsymbol{\theta}\right\rangle}\ {\rm d}\boldsymbol{\theta}\in\mathbb{C}^{s\times s},
	\qquad {\bf j}=(j_1,\ldots,j_d)\in\mathbb Z^d,\ \ \  {{\iota}^2=-1},
	\end{align}
	where {$\left\langle { \bf j},\boldsymbol{\theta}\right\rangle=\sum_{t=1}^dj_t\theta_t$ } and the integrals in \eqref{fhat} are computed componentwise.
	
	Then, the ${\bf n}$th
	Toeplitz matrix associated with $\textbf{f}$ is the matrix of order $N(s,\textbf{n})$ given by
	\begin{equation}\label{tep_multi_block}
	T_{\bf n}(\textbf{f}) =\sum_{\bf j=-(\bf n-\bf e)}^{\bf n-\bf e} J_{n_1}^{j_1} \otimes \cdots\otimes J_{n_d}^{j_d}\otimes \hat{\textbf{f}_{\bf j}}.
	\end{equation}
	where $\bx{e}=(1,\ldots,1)\in\mathbb{N}^d, \,\bx{j}=(j_1,\ldots,j_d)\in\mathbb{N}^d$ and $ J^{j_{\xi}}_{n_\xi}$  is the $n_{\xi} \times n_{\xi}$ matrix whose $(i,l)$th entry equals 1 if $(i-l)=j_{\xi}$ and $0$ otherwise.
	
	The set $\{T_{\bf n}(\textbf{f})\}_{{\bf n}}$ (with ${\bf n}\in\mathbb N^d$) is
	called the family of $d$-level Toeplitz matrices generated by $\textbf{f}$, that in turn is referred to as the generating function or the symbol of 
	$\{T_{\bf n}(\textbf{f})\}_{{\bf n}}$.
	
	{Moreover from (\ref{fhat}) the symbol can be expressed via the Fourier series  
		\begin{equation}\label{eq:symbol}
		\textbf{f}(\boldsymbol{\theta})= \sum_{\textbf{j}=-\boldsymbol{\infty}}^{\boldsymbol{\infty}}\hat{\textbf {f}}_{\textbf{j}}e^{{\iota}\left\langle {\bf j},\boldsymbol{\theta}\right\rangle}.
		\end{equation}
	}
\end{Definition}

In order to deal with low--rank/small--norm perturbations and to show that they do not
affect the symbol of a Toeplitz sequence, we introduce the definition of spectral
distribution in the sense of the eigenvalues and of the singular values for a
generic matrix-sequence $\{\mathcal{A}_{N}\}_{{\bf n}\in\mathbb{N}^v}$, $v\ge 1$, and then the notion of GLT algebra.

\begin{Definition}\label{def-distribution}
	Let $\textbf{f}:G\to\mathbb{C}^{s\times s}$ be a measurable function, defined on a measurable set $G\subset\mathbb R^\ell$ with $\ell\ge 1$,
	$0<\mu_\ell(G)<\infty$. Let $\mathcal C_0(\mathbb K)$ be the set of continuous functions with compact support over $\mathbb
	K\in \{\mathbb C, \mathbb R_0^+\}$ and let $\{\mathcal{A}_{N}\}_{{\bf n}\in\mathbb{N}^v}$, $v\ge 1$, be a sequence of matrices with
	eigenvalues $\lambda_j(\mathcal{A}_{N})$, $j=1,\ldots,N$ and singular
	values $\sigma_j(\mathcal{A}_{N})$, $j=1,\ldots,N$.
	
	\begin{itemize}
		\item $\{\mathcal{A}_{N}\}_{{\bf n}\in\mathbb{N}^v}$ is {\em distributed as the pair
			$(\textbf{f},G)$ in the sense of the eigenvalues,} in symbols
		$$\{\mathcal{A}_{N}\}_{{\bf n}\in\mathbb N^v}\sim_\lambda(\textbf{f},G),$$ if the following limit relation holds for all $F\in\mathcal C_0(\mathbb C)$:
		\begin{align}\label{distribution:sv-eig}
		\lim_{{\bf n}\to\infty}\frac{1}{N}\sum_{j=1}^{N}F(\lambda_j(\mathcal{A}_{N}))=
		\frac1{\mu_\ell(G)}\int_G \frac{\displaystyle \sum_{i=1}^s F\bigg(\left(\lambda^{(i)}(\textbf{f})\right)(\boldsymbol{\theta})\bigg)}{s} {\rm d}\boldsymbol{\theta}.
		\end{align}
		\item $\{\mathcal{A}_{N}\}_{{\bf n}\in\mathbb N^v}$ is {\em distributed as the pair
			$(\textbf{f},G)$ in the sense of the singular values,} in symbols
		$$\{\mathcal{A}_{N}\}_{{\bf n}\in\mathbb N^v}\sim_\sigma(\textbf{f},G),$$ if the following
		limit relation holds for all $F\in\mathcal C_0(\mathbb R_0^+)$:
		\begin{align}\label{distribution:sv-eig-bis}
		\lim_{{\bf n}\to\infty}\frac{1}{N}\sum_{j=1}^{N}F(\sigma_j(\mathcal{A}_{N}))=
		\frac1{\mu_\ell(G)}\int_G \frac{\displaystyle \sum_{i=1}^s F\bigg(\left(\sigma^{(i)}(\textbf{f})\right)(\boldsymbol{\theta})\bigg)}{s} {\rm d}\boldsymbol{\theta}.
		\end{align}
	\end{itemize}
	In this setting the expression ${{\bf n}\to\infty}$ means that every component of the vector ${\bf n}$ tends to infinity, that is,
	$\displaystyle \min_{i=1,\ldots,v} n_i\to\infty$.
\end{Definition}
\begin{Remark}\label{rem_distr}
	We denote by $\lambda^{(1)}(\textbf{f}), \ldots,\lambda^{(s)}(\textbf{f})$ and by $\sigma^{(1)}(\textbf{f}),
	\ldots,\sigma^{(s)}(\textbf{f})$ the eigenvalues and the singular values of a
	$s\times s$ matrix-valued function $\textbf{f}$, respectively. If $\textbf{f}$ is
	smooth enough, an informal interpretation of the limit relation
	\eqref{distribution:sv-eig} (resp. \eqref{distribution:sv-eig-bis})
	is that when the matrix-size of $\mathcal{A}_{N}$ is sufficiently large,
	then $N/s$ eigenvalues (resp. singular values) of $\mathcal{A}_{N}$ can
	be approximated by a sampling of $\lambda^{(1)}(\textbf{f})$ (resp. $\sigma^{(1)}(\textbf{f})$)
	on a uniform equispaced grid of the domain~$G$. Analogously each following
	$N/s$ eigenvalues (resp. singular values) can be approximated by an equispaced sampling
	of the relative $\lambda^{(j)}(\textbf{f})$ (resp. $\sigma^{(j)}(\textbf{f})$), $j=2,\ldots,s$, in the domain.
\end{Remark}
\begin{Remark} \label{sec:computations_with_the_eigenvalue_functions}
	To perform the sampling in Remark~\ref{rem_distr} computing a closed analytical expression of any of the eigenvalue functions of $\textbf{f}$ is not the most effective procedure. It is costly and, essentially, useless since for $q=1,\ldots s$ we can provide an ``exact'' evaluation of $\lambda^{(q)}(\mathbf{f})$ at the grid points $\{\boldsymbol{\theta}_{\bf n}=(\theta_1^{(j)},\theta_2^{(k)})\}_{j,k=0}^{n-1}$ without actually computing the analytical expression. Indeed the ``exact'' evaluation for $d=2$ case is achieved by 
	\begin{enumerate}
		\item sampling $\mathbf{f}$ at $\boldsymbol{\theta}_{ \bf n-e}=(\theta_{n-1}^{(j)},\theta_{n-1}^{(k)})$, $j,k=0,\ldots, n-1$, and thus obtain $n^2$ $s\times s$ matrices, $A_{j,k},$ $j,k=0,\ldots,n-1$;
		\item for each $j,k=0,\ldots, n-1$, compute the $s$ eigenvalues of $A_{j,k}$, $\lambda_q(A_{j,k})$, $q=1,\ldots,s$; 
		\item for a fixed $q=1,\ldots,s $, the evaluation of $\lambda^{(q)}(\mathbf{f})$ at $\boldsymbol{\theta}_{\bf n-e}$, $j,k=0,\ldots, n-1,$ is given by $\lambda_q(A_{j,k})$, $j,k=0,\ldots,n-1$.
	\end{enumerate}
\end{Remark}

\subsubsection{Spectral analysis of Hermitian (block)  Toeplitz sequences: distribution results}

We collect here some classical results concerning the distribution of Hermitian (block) Toeplitz sequences from \cite{GSz,Tillinota}, that we will use extensively in the following.
\begin{Theorem}[Grenander and Szeg\H{o}~\cite{GSz}]\label{szego}
	Let $f\in L^1(d,1)$ be a real-valued function with $d\ge 1$. Then, $$\left\{T_{\bf n}(f)\right\}_{{\bf n}\in\mathbb N^d}\sim_\lambda(f,\mathcal{I}_d).$$
\end{Theorem}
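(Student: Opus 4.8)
The plan is to prove the eigenvalue distribution by the classical \emph{method of moments}, and then lift it from trigonometric polynomials to arbitrary $f\in L^1(d,1)$ by an approximating-classes-of-sequences argument. Two preliminary remarks set the stage. First, since $f$ is real-valued one has $\widehat f_{-\mathbf{j}}=\overline{\widehat f_{\mathbf{j}}}$, so every $T_{\mathbf{n}}(f)$ is Hermitian with real eigenvalues, and it suffices to test \eqref{distribution:sv-eig} against $F\in\mathcal C_0(\mathbb R)$; here $s=1$, so the right-hand side of \eqref{distribution:sv-eig} is simply $\frac1{(2\pi)^d}\int_{\mathcal I_d}F(f(\boldsymbol\theta))\,\mathrm d\boldsymbol\theta$. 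Second, two quantitative facts will be used repeatedly: for any real $g$, every eigenvalue of $T_{\mathbf{n}}(g)$ lies in the fixed interval $[\essinf g,\esssup g]$ (Rayleigh quotients of $T_{\mathbf{n}}(g)$ are weighted averages of $g$, by Parseval), and for any $g\in L^1(d,1)$ one has the trace-norm bound $\|T_{\mathbf{n}}(g)\|_1\le \frac{N(\mathbf{n})}{(2\pi)^d}\|g\|_{L^1(\mathcal I_d)}$, obtained by writing $T_{\mathbf{n}}(g)$ as an integral superposition of rank-one matrices of trace norm $N(\mathbf{n})$.

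Step~1 (trigonometric polynomials). Fix a real $d$-variate trigonometric polynomial $g$ of degree $\le M$, so $T_{\mathbf{n}}(g)$ is banded with entries $(T_{\mathbf{n}}(g))_{\mathbf{i},\mathbf{l}}=\widehat g_{\mathbf{i}-\mathbf{l}}$. For $k\in\mathbb N$ one expands the $k$-th power along index paths:
\[
\operatorname{tr}\big(T_{\mathbf{n}}(g)^k\big)=\sum_{\substack{\mathbf{j}^{(1)},\dots,\mathbf{j}^{(k)}\\ \mathbf{j}^{(1)}+\cdots+\mathbf{j}^{(k)}=\mathbf{0}}} c_{\mathbf{n}}\big(\mathbf{j}^{(1)},\dots,\mathbf{j}^{(k)}\big)\,\widehat g_{\mathbf{j}^{(1)}}\cdots\widehat g_{\mathbf{j}^{(k)}},
\]
where $c_{\mathbf{n}}$ counts the starting lattice points in the box $\prod_{\xi}\{1,\dots,n_\xi\}$ whose partial sums $\mathbf{i},\ \mathbf{i}-\mathbf{j}^{(1)},\ \mathbf{i}-\mathbf{j}^{(1)}-\mathbf{j}^{(2)},\dots$ all remain in the box. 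Since $|\mathbf{j}^{(r)}_\xi|\le M$, this count splits over the $d$ directions and satisfies $\prod_\xi(n_\xi-kM)\le c_{\mathbf{n}}\le N(\mathbf{n})$, so $c_{\mathbf{n}}=N(\mathbf{n})(1+o(1))$ as $\min_i n_i\to\infty$, the sum being finite. On the other side, the Fourier convolution identity together with $\frac1{(2\pi)^d}\int_{\mathcal I_d}\mathrm e^{\iota\langle\mathbf{m},\boldsymbol\theta\rangle}\,\mathrm d\boldsymbol\theta=\delta_{\mathbf{m},\mathbf{0}}$ gives $\frac1{(2\pi)^d}\int_{\mathcal I_d}g(\boldsymbol\theta)^k\,\mathrm d\boldsymbol\theta=\sum_{\mathbf{j}^{(1)}+\cdots+\mathbf{j}^{(k)}=\mathbf{0}}\widehat g_{\mathbf{j}^{(1)}}\cdots\widehat g_{\mathbf{j}^{(k)}}$. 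Dividing by $N(\mathbf{n})$ and letting $\mathbf{n}\to\infty$ yields $\frac1{N(\mathbf{n})}\sum_{j}\lambda_j(T_{\mathbf{n}}(g))^k\to\frac1{(2\pi)^d}\int_{\mathcal I_d}g^k$, that is, \eqref{distribution:sv-eig} for every monomial $F(x)=x^k$, hence for every polynomial $F$. Since all eigenvalues of all $T_{\mathbf{n}}(g)$ and all values of $g$ lie in the fixed compact interval $[\essinf g,\esssup g]$, the Weierstrass approximation theorem upgrades this to every $F\in\mathcal C_0(\mathbb R)$, so $\{T_{\mathbf{n}}(g)\}_{\mathbf{n}}\sim_\lambda(g,\mathcal I_d)$.

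Step~2 (lifting to $L^1$). For a general real $f\in L^1(d,1)$, let $f_m:=\sigma_m f$ be the $d$-variate Ces\`aro (Fej\'er) mean of $f$: each $f_m$ is a real trigonometric polynomial, $\|f-f_m\|_{L^1(\mathcal I_d)}\to0$ by Fej\'er's theorem, and hence $f_m\to f$ in measure on $\mathcal I_d$. Applying the trace-norm bound to $T_{\mathbf{n}}(f-f_m)=T_{\mathbf{n}}(f)-T_{\mathbf{n}}(f_m)$ gives $\frac1{N(\mathbf{n})}\|T_{\mathbf{n}}(f-f_m)\|_1\le\varepsilon_m:=\frac1{(2\pi)^d}\|f-f_m\|_{L^1}\to0$, uniformly in $\mathbf{n}$; peeling off the eigenvalues of the Hermitian matrix $T_{\mathbf{n}}(f-f_m)$ exceeding $\sqrt{\varepsilon_m}$ in modulus (at most $\sqrt{\varepsilon_m}\,N(\mathbf{n})$ of them) exhibits $\{T_{\mathbf{n}}(f_m)\}_{\mathbf{n}}$ as an approximating class of sequences for $\{T_{\mathbf{n}}(f)\}_{\mathbf{n}}$. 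Since Step~1 gives $\{T_{\mathbf{n}}(f_m)\}_{\mathbf{n}}\sim_\lambda(f_m,\mathcal I_d)$ for every $m$, and $f_m\to f$ in measure, the stability theorem for eigenvalue distributions of Hermitian matrix-sequences under approximating classes of sequences yields $\{T_{\mathbf{n}}(f)\}_{\mathbf{n}}\sim_\lambda(f,\mathcal I_d)$, which is the assertion.

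The principal obstacle is the multilevel counting estimate $c_{\mathbf{n}}=N(\mathbf{n})(1+o(1))$ in Step~1: one must control, uniformly over the finitely many admissible tuples $(\mathbf{j}^{(1)},\dots,\mathbf{j}^{(k)})$, the ``boundary'' starting points whose partial sums leave the box, and it is precisely here that the tensor structure in \eqref{tep_multi_block} — which makes the count factor as a product over the $d$ coordinate directions — is essential. A secondary delicate point is the $L^1$ passage in Step~2: the perturbation $T_{\mathbf{n}}(f-f_m)$ need not be small in spectral norm, only in normalized trace norm, so one cannot perturb eigenvalues by a crude norm estimate and must instead route the argument through the approximating-classes-of-sequences machinery (a vanishing-rank plus vanishing-norm splitting) together with convergence in measure of the symbols.
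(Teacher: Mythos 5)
The paper does not prove Theorem~\ref{szego}; it states it as a classical distribution result and cites Grenander--Szeg\H{o} for the background, so there is no internal proof to compare against. Your argument is nevertheless correct and is the standard modern route to the $L^1$, multilevel form of the theorem: moments for banded Toeplitz matrices with trigonometric-polynomial symbol, then lifting via Ces\`aro means, the normalized trace-norm estimate $\|T_{\mathbf{n}}(g)\|_1\le N(\mathbf{n})(2\pi)^{-d}\|g\|_{L^1}$, a singular-value Markov split into vanishing-rank plus vanishing-norm pieces (which is Hermitian, as needed for the eigenvalue closure theorem), and convergence in measure of the symbols. The original Grenander--Szeg\H{o} treatment was univariate and for bounded symbols; what you write is essentially Tyrtyshnikov's and Tilli's extension, which is the form the paper actually uses. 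Two small cosmetic points: in the counting estimate the lower bound should account for the partial sums wandering in both directions, so $\prod_\xi(n_\xi-2kM)$ rather than $\prod_\xi(n_\xi-kM)$, though this changes nothing asymptotically; and the localization $\lambda\in[\essinf g,\esssup g]$ should be stated as holding for $g\in L^\infty$, which is enough since you only invoke it for trigonometric polynomials. Neither affects the validity of the proof.
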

In the case where $\textbf{f}$ is a Hermitian matrix-valued function, according to Tilli~\cite{Tillinota}, the previous theorem can be extended as follows:
\begin{Theorem}[Tilli~\cite{Tillinota}]\label{szego-herm} 
	Let $\textbf{f}\in L^1(d,s)$ be a Hermitian matrix-valued function with $d\ge 1,s\ge 2$. Then, $$\{T_{\bf n}(\textbf{f})\}_{{\bf n}\in\mathbb N^d}\sim_\lambda~(\textbf{f},\mathcal{I}_d).$$
\end{Theorem}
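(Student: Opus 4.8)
The plan is to establish the eigenvalue distribution by the classical moment (``Szeg\H{o}'') method: prove it first for matrix-valued trigonometric polynomials, then transport it to a general $\textbf{f}\in L^1(d,s)$ by a two-stage approximation, exploiting at each stage that a perturbation which is small in trace norm relative to the matrix size cannot change the asymptotic spectral distribution of a sequence of Hermitian matrices. For $s=1$ this recovers Theorem~\ref{szego}.

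\textbf{The polynomial case.} First I would assume $\textbf{f}$ is a Hermitian matrix-valued trigonometric polynomial, so that $T_{\bf n}(\textbf{f})$ is Hermitian, banded in each of the $d$ levels, and $\|T_{\bf n}(\textbf{f})\|\le\|\textbf{f}\|_{L^\infty}$. The key ingredient is a trace-swap identity: for matrix-valued trigonometric polynomials $\textbf{f},\textbf{g}$,
\[
T_{\bf n}(\textbf{f})\,T_{\bf n}(\textbf{g})=T_{\bf n}(\textbf{f}\textbf{g})+R_{\bf n},
\]
obtained by applying slot by slot in~\eqref{tep_multi_block} the identity $J_{n_\xi}^{j_\xi}J_{n_\xi}^{k_\xi}=J_{n_\xi}^{j_\xi+k_\xi}+(\text{a correction of rank }O(1))$ and recognizing the leading part as $T_{\bf n}(\textbf{f}\textbf{g})$ via the convolution formula for the Fourier coefficients of $\textbf{f}\textbf{g}$. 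Since a remainder contribution localized in the $\xi$-th slot is a Kronecker product with one rank-$O(1)$ factor, $\operatorname{rank}(R_{\bf n})=O\!\left(N(\textbf{n})\sum_{\xi=1}^{d}1/n_\xi\right)=o(N)$ while $\|R_{\bf n}\|=O(1)$; iterating gives $T_{\bf n}(\textbf{f})^{k}=T_{\bf n}(\textbf{f}^{k})+R_{\bf n}^{(k)}$ with $\operatorname{rank}(R_{\bf n}^{(k)})=o(N)$ and $\|R_{\bf n}^{(k)}\|=O(1)$, hence $\operatorname{tr}(T_{\bf n}(\textbf{f})^{k})=\operatorname{tr}(T_{\bf n}(\textbf{f}^{k}))+o(N)$. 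Now $\operatorname{tr}(J_{n_1}^{j_1}\otimes\cdots\otimes J_{n_d}^{j_d})=N(\textbf{n})\,\delta_{{\bf j},{\bf 0}}$, so by~\eqref{tep_multi_block} and~\eqref{fhat} one has $\operatorname{tr}(T_{\bf n}(\textbf{f}^{k}))=N(\textbf{n})\operatorname{tr}(\widehat{(\textbf{f}^{k})}_{\bf 0})=\tfrac{N(\textbf{n})}{(2\pi)^{d}}\int_{\mathcal{I}_d}\operatorname{tr}(\textbf{f}(\boldsymbol\theta)^{k})\,{\rm d}\boldsymbol\theta$; dividing by $N=sN(\textbf{n})$ and using that $\textbf{f}(\boldsymbol\theta)$ is Hermitian, for every $k\ge0$,
\[
\lim_{{\bf n}\to\infty}\frac{1}{N}\sum_{j=1}^{N}\lambda_j\!\big(T_{\bf n}(\textbf{f})\big)^{k}=\frac{1}{(2\pi)^{d}}\int_{\mathcal{I}_d}\frac{\sum_{i=1}^{s}\big(\lambda^{(i)}(\textbf{f})(\boldsymbol\theta)\big)^{k}}{s}\,{\rm d}\boldsymbol\theta.
\]
By linearity this is~\eqref{distribution:sv-eig} for every polynomial $F$; since $\operatorname{spec}(T_{\bf n}(\textbf{f}))$ and the range of each $\lambda^{(i)}(\textbf{f})$ lie in the fixed compact interval $[-\|\textbf{f}\|_{L^\infty},\|\textbf{f}\|_{L^\infty}]$, the Weierstrass theorem upgrades it to every $F\in\mathcal{C}_0(\mathbb{C})$.

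\textbf{Removing the polynomial assumption.} Next I would argue in two steps, both based on Mirsky's theorem $\sum_j|\lambda_j(X)-\lambda_j(Y)|\le\|X-Y\|_1$ for Hermitian $X,Y$ (eigenvalues sorted), the bound $\|T_{\bf n}(\textbf{g})\|_1\le\tfrac{N(\textbf{n})}{(2\pi)^{d}}\|\textbf{g}\|_{L^1}$, and the Lipschitz stability $|\lambda^{(i)}(\textbf{g}_1)(\boldsymbol\theta)-\lambda^{(i)}(\textbf{g}_2)(\boldsymbol\theta)|\le\|\textbf{g}_1(\boldsymbol\theta)-\textbf{g}_2(\boldsymbol\theta)\|$ of sorted eigenvalues. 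For bounded Hermitian $\textbf{f}\in L^\infty(d,s)$, let $\textbf{p}_m$ be the multivariate Fej\'{e}r means of the Fourier series of $\textbf{f}$: being convolutions of $\textbf{f}$ against a nonnegative scalar kernel, they are Hermitian polynomials with $\|\textbf{p}_m\|_{L^\infty}\le\|\textbf{f}\|_{L^\infty}$ and $\textbf{p}_m\to\textbf{f}$ in $L^1$. For Lipschitz $F$ these yield
\[
\frac{1}{N}\Big|\sum_j F\big(\lambda_j(T_{\bf n}(\textbf{f}))\big)-\sum_j F\big(\lambda_j(T_{\bf n}(\textbf{p}_m))\big)\Big|\le\operatorname{Lip}(F)\,\frac{\|T_{\bf n}(\textbf{f}-\textbf{p}_m)\|_1}{N}\le C\operatorname{Lip}(F)\,\|\textbf{f}-\textbf{p}_m\|_{L^1}
\]
uniformly in ${\bf n}$, together with $\big|\int_{\mathcal{I}_d}\sum_i F(\lambda^{(i)}(\textbf{p}_m))-\int_{\mathcal{I}_d}\sum_i F(\lambda^{(i)}(\textbf{f}))\big|\le s\operatorname{Lip}(F)\,\|\textbf{f}-\textbf{p}_m\|_{L^1}$; a standard $3\varepsilon$ argument combining these with the polynomial case applied to each $\textbf{p}_m$ and letting $m\to\infty$ gives~\eqref{distribution:sv-eig} for Lipschitz, hence for all, $F\in\mathcal{C}_0(\mathbb{C})$. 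Finally, for a general Hermitian $\textbf{f}\in L^1(d,s)$, apply the same mechanism to the spectral truncations $\textbf{f}^{(R)}$ (same eigenvectors as $\textbf{f}$, eigenvalues clipped to $[-R,R]$): these are Hermitian, lie in $L^\infty(d,s)$, satisfy $\textbf{f}^{(R)}\to\textbf{f}$ in $L^1$ as $R\to\infty$, the result holds for each $\textbf{f}^{(R)}$, and the very same estimates transfer it to $\textbf{f}$, giving $\{T_{\bf n}(\textbf{f})\}_{{\bf n}\in\mathbb{N}^{d}}\sim_\lambda(\textbf{f},\mathcal{I}_d)$.

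\textbf{Main obstacle.} The delicate point is the polynomial step: bounding the rank of the remainder $R_{\bf n}$ in the genuinely multilevel block setting, where its support is an $O(N(\textbf{n})/n_\xi)$-dimensional ``boundary layer'' in each of the $d$ tensor slots, and checking this stays $o(N)$ after summing over slots and accounting for the fixed block size $s$; relative to the scalar case of Theorem~\ref{szego}, this is exactly where the multi-index bookkeeping genuinely enters. A secondary but necessary point, recurring in both approximation steps, is keeping every approximant ($\textbf{p}_m$, $\textbf{f}^{(R)}$) Hermitian, so that Mirsky's inequality and the real-valued eigenvalue functions $\lambda^{(i)}(\textbf{f})$ stay available.
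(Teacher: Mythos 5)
The paper states this result as a theorem of Tilli~\cite{Tillinota} and does not include a proof, so there is no internal argument to compare against; what follows is an assessment of your sketch on its own terms. Your reconstruction via the Szeg\H{o} moment method is correct and is essentially the classical route: the polynomial step (with the slot-by-slot $O(N(\mathbf{n})/n_\xi)$ boundary-layer accounting, which is exactly the multilevel bookkeeping you flag as the main obstacle) gives the moment identities, Weierstrass plus the uniform spectral bound $\|T_{\mathbf{n}}(\textbf{f})\|\le\|\textbf{f}\|_{L^\infty}$ upgrades them to all $F\in\mathcal{C}_0$, and the two-stage $L^1$-approximation by Fej\'er means and spectral truncation, controlled by Mirsky's inequality together with the trace-norm bound $\|T_{\mathbf{n}}(\textbf{g})\|_1\lesssim N(\mathbf{n})\|\textbf{g}\|_{L^1}$, transfers the conclusion from polynomials to a general Hermitian $L^1$ symbol. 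The one ingredient you use without justification is that trace-norm bound: it is true and worth one line, e.g.\ write
\begin{equation*}
T_{\mathbf{n}}(\textbf{g})=\frac{1}{(2\pi)^d}\int_{\mathcal{I}_d}\bigl(v_{n_1}(\theta_1)v_{n_1}(\theta_1)^*\bigr)\otimes\cdots\otimes\bigl(v_{n_d}(\theta_d)v_{n_d}(\theta_d)^*\bigr)\otimes\textbf{g}(\boldsymbol\theta)\,{\rm d}\boldsymbol\theta,\quad v_n(\theta)=\bigl(1,e^{\iota\theta},\ldots,e^{\iota(n-1)\theta}\bigr)^T,
\end{equation*}
and bound the trace norm of the integrand pointwise by $N(\mathbf{n})\,\|\textbf{g}(\boldsymbol\theta)\|_1$. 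The modern GLT literature the paper leans on (e.g., Serra-Capizzano and Garoni's monograph) typically packages your last two steps in the language of approximating classes of sequences (a.c.s.) rather than invoking Mirsky's inequality directly; your version is more elementary and entirely equivalent, but you lose the reusability of the a.c.s.\ machinery that the rest of the paper exploits.
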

\begin{Remark}\label{rem_simmetria}
	If $\{T_{\bf n}(\textbf{f})\}_{{\bf n}\in\mathbb N^d}$ is such that each $T_{\bf n}(\textbf{f})$ is symmetric with real symmetric blocks, then the symbol has the additional property that
	\[\textbf{f}(\pm\theta_1,\ldots,\pm\theta_d)\equiv \textbf{f}(\theta_1,\ldots,\theta_d),\quad\forall(\theta_1,\ldots,\theta_d)\in \mathcal{I}_d^+=
	[0,\pi]^d,\] and therefore Theorem \ref{szego-herm} can be restated as
	\[\{T_{\bf n}(\textbf{f})\}_{{\bf n}\in\mathbb N^d}\sim_\lambda~(\textbf{f},\mathcal{I}_d^+).\]
	
\end{Remark}

\subsubsection{GLT sequences: operative features}

We list here some properties and operative features from the theory of GLT sequences in their block form; refer to~\cite{glt2,axioms7030049,SerraLibro2} for a full account of the GLT theory. 
\begin{description}
	\item[GLT1] Each GLT sequence has a singular value symbol $\textbf{f}(\textbf{x},\boldsymbol{\theta})$ for $(\textbf{x},\boldsymbol{\theta})\in [0,1]^d\times [-\pi,\pi]^d$ according to the second Item in Definition \ref{def-distribution} with $\ell=2d$. 
	If the sequence is Hermitian, then
	the distribution also holds in the eigenvalue sense.
	If $\{\mathcal{A}_N\}_N$ has a GLT symbol $\textbf{f}(\textbf{x},\boldsymbol{\theta})$ we will write $\{\mathcal{A}_N\}_N\sim_{\textsc{glt}} \textbf{f}(\textbf{x},\boldsymbol{\theta})$.
	\item[GLT2] The set of GLT sequences form a $*$-algebra, i.e., it is closed under linear combinations, products,
	inversion (whenever the symbol is singular, at most, in a set of zero Lebesgue measure), and conjugation. Hence, the sequence
	obtained via algebraic operations on a finite set of given GLT sequences is still a GLT sequence and its symbol is
	obtained by performing the same algebraic manipulations on the corresponding symbols of the input GLT sequences.
	
	\item[GLT3] Every Toeplitz sequence generated by an $L^1(d,s)$ function $\textbf{f}=\textbf{f}(\boldsymbol{\theta})$ is a GLT sequence and its symbol
	is $\textbf{f}$, with the specifications reported in item {\bf GLT1}. We note that the function $\textbf{f}$ does not depend on the
	space variables $\textbf{x}\in [0,1]^d$.
	
	\item[GLT4] Every sequence which is distributed as the constant zero in the singular value sense is a GLT sequence with
	symbol~$0$. In particular:
	\begin{itemize}
		\item every sequence in which the rank divided by the size tends to zero, as the matrix size tends to infinity;
		\item every sequence in which the trace-norm (i.e., sum of the singular values) divided by the size tends to zero, as the matrix size tends to infinity.
	\end{itemize}	 
	\item[GLT5] If $\{\mathcal{A}_N\}_N \sim_{\rm GLT}\kappa$ and the matrices $\mathcal{A}_N$ are such that $\mathcal{A}_N=\mathcal{X}_N+\mathcal{Y}_n$, where
	\begin{itemize}
		\item every $\mathcal{X}_N$ is Hermitian,
		\item the spectral norms of $\mathcal{X}_N$ and $\mathcal{Y}_N$ are uniformly bounded with respect to~$N$,
		\item the trace-norm of $\mathcal{Y}_N$ divided by the matrix size $N$ converges to~0,
	\end{itemize}
	then the distribution holds in the eigenvalue sense.
\end{description}
We highlight that from the previous properties follows that a sequence of Toeplitz matrices is, up to low-rank corrections, a GLT sequence whose symbol is not affected by the low-rank perturbation.
\begin{Theorem}\cite[Section 8.4]{SerraLibro1}\label{th:esercizio_libro}
	Let $\{A_N\}_N$ be a sequence of Hermitian matrices such that $\{A_N\}_N\sim_{ GLT}\kappa$, and let $\{P_N\}_N$ be a sequence of Hermitian positive definite matrices such that $\{P_N\}_N\sim_{GLT}\xi$ and $\xi\ne0$ a.e. Then 
	\[ \{P_N^{-1}A_N\}_N\sim_{\rm GLT}\xi^{-1}\kappa,\qquad \{P_N^{-1}A_N\}_N\sim_{\sigma,\,\lambda}(\xi^{-1}\kappa,\mathcal{I}^d). \]
\end{Theorem}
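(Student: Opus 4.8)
The plan is to dispatch the GLT statement and the singular-value distribution by a direct appeal to the operative features, and to handle the eigenvalue distribution --- the only delicate point --- via a similarity argument. Since $\{P_N\}_N\sim_{\rm GLT}\xi$ with $\xi\neq0$ almost everywhere, closure of the GLT $*$-algebra under inversion (property \textbf{GLT2}) gives $\{P_N^{-1}\}_N\sim_{\rm GLT}\xi^{-1}$; combining this with $\{A_N\}_N\sim_{\rm GLT}\kappa$ and closure under products (again \textbf{GLT2}) yields $\{P_N^{-1}A_N\}_N\sim_{\rm GLT}\xi^{-1}\kappa$. The singular-value distribution $\{P_N^{-1}A_N\}_N\sim_\sigma(\xi^{-1}\kappa,\mathcal{I}^d)$ is then immediate from \textbf{GLT1}, which attaches a singular-value symbol to every GLT sequence.

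For the eigenvalue distribution \textbf{GLT1} does not apply directly, since $P_N^{-1}A_N$ is generally non-Hermitian even though $P_N$ and $A_N$ are. I would pass to $\hat{A}_N:=P_N^{-1/2}A_N P_N^{-1/2}$ (well defined, since $P_N$ is Hermitian positive definite), which is Hermitian and satisfies $P_N^{-1}A_N=P_N^{-1/2}\hat{A}_N P_N^{1/2}$; in particular $\hat{A}_N$ and $P_N^{-1}A_N$ have exactly the same eigenvalues, so the limit relation \eqref{distribution:sv-eig} for the two sequences coincides. It therefore suffices to show $\{\hat{A}_N\}_N\sim_{\rm GLT}\xi^{-1}\kappa$: being Hermitian, \textbf{GLT1} then forces $\{\hat{A}_N\}_N\sim_\lambda(\xi^{-1}\kappa,\mathcal{I}^d)$, and this transfers verbatim to $\{P_N^{-1}A_N\}_N$. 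To identify the symbol of $\{\hat{A}_N\}_N$ I would invoke $\{P_N^{\pm1/2}\}_N\sim_{\rm GLT}\xi^{\pm1/2}$ (with $\xi\ge0$ a.e., as the symbol of a Hermitian positive definite sequence), after which \textbf{GLT2} applied to the triple product gives $\{\hat{A}_N\}_N\sim_{\rm GLT}\xi^{-1/2}\kappa\,\xi^{-1/2}=\xi^{-1}\kappa$, the last equality because scalar-valued symbols commute.

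The main obstacle is establishing $\{P_N^{1/2}\}_N\sim_{\rm GLT}\xi^{1/2}$, which is not among the listed properties \textbf{GLT1}--\textbf{GLT5}. I would prove it by a Weierstrass-type approximation: approximate $t\mapsto\sqrt{t}$ uniformly by polynomials $p_k$ on an interval containing the relevant spectra, observe that $\{p_k(P_N)\}_N\sim_{\rm GLT}p_k(\xi)$ by the $*$-algebra property, and control the remainder $\{P_N^{1/2}-p_k(P_N)\}_N$ through the approximating-class (a.c.s.) machinery underpinning \textbf{GLT4}--\textbf{GLT5}, using $p_k(\xi)\to\xi^{1/2}$ in measure; letting $k\to\infty$ then yields the claim. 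Once this lemma is in place the remaining steps are routine bookkeeping; alternatively, this stability of the GLT class under continuous matrix functions of Hermitian positive definite sequences can be cited directly from \cite{SerraLibro1,SerraLibro2}.
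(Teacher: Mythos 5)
The paper does not prove this result; it cites it verbatim from \cite[Section~8.4]{SerraLibro1}, so there is no in-paper proof against which to compare your proposal. That said, your sketch does follow the standard route used in that reference: read off $\{P_N^{-1}A_N\}_N\sim_{\rm GLT}\xi^{-1}\kappa$ and the singular-value distribution from the $*$-algebra axioms, then reduce the eigenvalue distribution to the Hermitian case by the similarity $P_N^{-1}A_N\sim P_N^{-1/2}A_NP_N^{-1/2}$.

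Two points in your last paragraph need tightening. First, uniform polynomial approximation of $t\mapsto\sqrt{t}$ is available only on compact intervals, and nothing in the hypotheses bounds the spectra of the $P_N$ uniformly in $N$; the passage to the limit therefore requires either a truncation step or, equivalently, the a.c.s.\ closure of the GLT class, which is a separate GLT axiom (the closure of the class under continuous functions of Hermitian GLT sequences) that is \emph{not} among the properties \textbf{GLT1}--\textbf{GLT5} reproduced in the paper and should be invoked explicitly. Second, the same Weierstrass device cannot give $\{P_N^{-1/2}\}_N\sim_{\rm GLT}\xi^{-1/2}$ directly, since $t\mapsto t^{-1/2}$ is discontinuous and unbounded at $0$, precisely where $\xi$ may vanish on a null set; the correct route is to first establish $\{P_N^{1/2}\}_N\sim_{\rm GLT}\xi^{1/2}$ by the functional-calculus axiom, and then obtain $P_N^{-1/2}=P_N^{-1}\,P_N^{1/2}$ by \textbf{GLT2} together with $\{P_N^{-1}\}_N\sim_{\rm GLT}\xi^{-1}$. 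With those two repairs your outline coincides with the argument in \cite{SerraLibro1}.
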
	

\subsection{Spectral Analysis of the Sequence $\{\mathcal{A}_N\}_N$}
\label{Study_BN_perm}

We can now use the introduced tools to perform the spectral analysis of the matrix sequence $\{\bar{\mathcal{A}}_N\}_N$ {, assuming that $n=n_1=n_2$, $p=1$}. For studying it is easier to consider the equivalent distribution given by the following symmetric diagonal scaling
\begin{equation}
\mathcal{A}_N = \mathcal{D}^{(1)}_N \bar{\mathcal{A}}_N \mathcal{D}^{(2)}_N = \begin{bmatrix}
h^4  {M} & O &  {K}^T \\
O & \alpha  {M}  & - {M} \\
{K} & - {M} & O \\
\end{bmatrix}, \qquad h=\frac{1}{n+1},\label{eq:spectrally_analyzed_matrix}
\end{equation}
with
\begin{equation*}
\mathcal{D}^{(1)}_N  = \begin{bmatrix}
h^2 I_{n^2}  & O &  O \\
O & I_{n^2} & O \\
O & O & I_{n^2}\\
\end{bmatrix}, \qquad \mathcal{D}^{(2)}_N  = \begin{bmatrix}
I_{n^2}  & O &  O \\
O & \frac{1}{h^2}I_{n^2} & O \\
O & O & \frac{1}{h^2}I_{n^2}\\
\end{bmatrix}.
\end{equation*}
From the discretization of the Section \ref{sec:discretization}, the elements of the matrix $\bar{M}$ depend on $n$ as $1/(n+1)^2$. Hence, the effect of the proposed scaling permits to eliminate the dependence of $h^2$ of the elements in $\bar{M}$, which, for $n$ large, would make the matrix $\mathcal{A}_N$ ill-conditioned.

	In particular the matrices {${M} = \frac{1}{h^2}\bar{M}= T_\mathbf{n}(m)$, ${K} =\bar{K}= T_\mathbf{n}(\kappa)$} are $n^2\times n^2$ bi-level Toeplitz matrices with generating functions 
	\begin{equation}\label{eq:m-bi-toeplitz}
	m(\theta_1,\theta_2) = \frac{\cos \left(\theta_1\right)}{6}+\frac{\cos \left(\theta_2\right)}{6}+\frac{1}{6} \cos \left(\theta_1+\theta_2\right)+\frac{1}{2}, 
	\end{equation}
	and 
	\begin{equation}\label{eq:k-bi-toeplitz}
	\kappa(\theta_1,\theta_2) = -2 \cos \left({\theta} _1\right)-2 \cos \left(\theta_2\right)+4.
	\end{equation}
	{We stress that in this case the matrices $M$ and $K$ are real and symmetric. A property that we will exploit the theoretical analysis, nevertheless we keep the notation $K^T$ for the (1,3) block of the matrix $\mathcal{A}_N$ for two reasons. On one side, for being consistent with the continuous setting, in which the adjoint is usually explicitly expressed. On the other, to keep the analogy with Section~\ref{sec:Study_BN_perm_non_herm} in which we will discuss the usage of the advection-diffusion equation as constraint.}

\begin{Theorem}\label{thm:spectral_magic}
	The matrix sequence $\{\mathcal{A}_N\}_N$ in~\eqref{eq:spectrally_analyzed_matrix} is  distributed in the sense of the Eigenvalues as
	\begin{equation}\label{eqn.symbol.f_reduced}
	\textbf{f}(\theta_1 , \theta_2 ) =   \hat {\textbf{f}}_{(0,0)}+ 2 \hat {\textbf{f}}_{(0,-1)}\left(\cos\theta_1+\cos\theta_2 \right)+2  \hat {\textbf{f}}_{(-1,-1)}\left(\cos(\theta_1+\theta_2)\right),
	\end{equation}
	i.e., $\{\mathcal{A}_N\}_N\sim_\lambda(\textbf{f},[0,\pi]^2)$, where
	\begin{align}\nonumber
	\hat {\textbf{f}}_{(0,0)} =\begin{bmatrix}
	0 & 0 & 4\\
	0 & \frac{\alpha}{2}& -\frac{1}{2}\\
	4&-\frac{1}{2} & 0
	\end{bmatrix}, \quad 
	\hat {\textbf{f}}_{(1,1)} =\hat {\textbf{f}}_{(-1,-1)}=\begin{bmatrix}
	0 & 0 & 0\\
	0 & \frac{\alpha}{12}& -\frac{1}{12}\\
	0&-\frac{1}{12} & 0
	\end{bmatrix},\\ \label{eq:symbol_coefficients}
	\hat {\textbf{f}}_{(-1,0)} =\hat {\textbf{f}}_{(0,-1)}=\hat{ \textbf{f}}_{(0,1)} =\hat {\textbf{f}}_{(1,0)}=\begin{bmatrix}
	0 & 0 & -1\\
	0 & \frac{\alpha}{12}& -\frac{1}{12}\\
	-1&-\frac{1}{12} & 0 
	\end{bmatrix}.
	\end{align}
\end{Theorem}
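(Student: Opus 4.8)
The plan is to exploit the GLT machinery recalled in Section~\ref{sec:background_and_definitions} to recognize $\{\mathcal{A}_N\}_N$ as a $2$-level block Toeplitz sequence whose generating function is precisely the claimed $\mathbf{f}$, and then to invoke the Hermitian distribution theorem to conclude the eigenvalue distribution. First I would observe that, after the symmetric diagonal scaling~\eqref{eq:spectrally_analyzed_matrix}, the three diagonal-position nonzero blocks are, block by block, of the form $T_{\mathbf{n}}(g)$ for an explicit trigonometric polynomial $g$: we already have $M=T_{\mathbf{n}}(m)$ and $K=T_{\mathbf{n}}(\kappa)$ from~\eqref{eq:m-bi-toeplitz}--\eqref{eq:k-bi-toeplitz}, the block $h^4M=h^4 T_{\mathbf{n}}(m)$ has trace-norm over size tending to zero (since $h\to 0$), and $\alpha M$, $-M$ are scalar multiples of $T_{\mathbf{n}}(m)$. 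Assembling a $3\times 3$ block matrix each of whose entries is a bi-level Toeplitz matrix $T_{\mathbf{n}}(g_{rs})$ is again a bi-level Toeplitz matrix $T_{\mathbf{n}}(\mathbf{f})$ with the $\mathbb{C}^{3\times3}$-valued symbol $\mathbf{f}=(g_{rs})_{r,s=1}^{3}$, up to a reordering of indices (grouping the three copies of each grid node). Concretely, since $m(\theta_1,\theta_2)=\tfrac12+\tfrac16\cos\theta_1+\tfrac16\cos\theta_2+\tfrac16\cos(\theta_1+\theta_2)$ and $\kappa(\theta_1,\theta_2)=4-2\cos\theta_1-2\cos\theta_2$, I would simply read off the Fourier coefficients of the $3\times 3$ matrix-valued function whose $(1,1)$ entry is (asymptotically) $0$, $(2,2)$ entry is $\alpha\, m$, $(1,3)=(3,1)$ entry is $\kappa$, $(2,3)=(3,2)$ entry is $-m$, and $(3,3)=(1,2)=(2,1)$ entries are $0$; matching coefficients against the Fourier expansion~\eqref{eq:symbol} gives exactly the matrices $\hat{\mathbf{f}}_{(0,0)},\hat{\mathbf{f}}_{(\pm1,\pm1)},\hat{\mathbf{f}}_{(\pm1,0)},\hat{\mathbf{f}}_{(0,\pm1)}$ in~\eqref{eq:symbol_coefficients}, and the compact form~\eqref{eqn.symbol.f_reduced}.

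Next I would handle the $h^4M$ block rigorously. Write $\mathcal{A}_N=\mathcal{X}_N+\mathcal{Y}_N$, where $\mathcal{X}_N$ is the Hermitian matrix obtained by replacing the $(1,1)$ block with $0$ — i.e. $\mathcal{X}_N=T_{\mathbf{n}}(\mathbf{f})$ with $\mathbf{f}$ as in~\eqref{eqn.symbol.f_reduced} — and $\mathcal{Y}_N=\mathrm{diag}(h^4M,0,0)$. By Theorem~\ref{szego-herm} (Tilli) together with Remark~\ref{rem_simmetria}, $\{\mathcal{X}_N\}_N=\{T_{\mathbf{n}}(\mathbf{f})\}_N\sim_\lambda(\mathbf{f},[0,\pi]^2)$, and by $\textbf{GLT3}$ it is a GLT sequence with symbol $\mathbf{f}$. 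For $\mathcal{Y}_N$: $\|\mathcal{Y}_N\|=h^4\|M\|$ is uniformly bounded (indeed $\|M\|\le\|m\|_\infty<\infty$ and $h<1$), while its trace-norm is $h^4\,\mathrm{tr}(M)$ (as $M\succ0$), which is $O(h^4\cdot n^2)=O(h^2)\to0$, and dividing by the size $N=3n^2$ still tends to $0$. Hence $\{\mathcal{Y}_N\}_N\sim_\sigma 0$, so by $\textbf{GLT4}$ it is a zero-symbol GLT sequence and $\{\mathcal{A}_N\}_N\sim_{\textsc{glt}}\mathbf{f}$ by $\textbf{GLT2}$. Finally, the hypotheses of $\textbf{GLT5}$ are met — $\mathcal{X}_N$ Hermitian, $\|\mathcal{X}_N\|,\|\mathcal{Y}_N\|$ uniformly bounded, $\mathrm{tr\text{-}norm}(\mathcal{Y}_N)/N\to0$ — so the distribution holds in the eigenvalue sense: $\{\mathcal{A}_N\}_N\sim_\lambda(\mathbf{f},[0,\pi]^2)$, with the restriction to $[0,\pi]^2$ justified by the symmetry of the blocks via Remark~\ref{rem_simmetria}.

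The main obstacle I anticipate is purely bookkeeping rather than conceptual: one must be careful about the ordering of unknowns. The linear system~\eqref{eq:the_linear_system} groups all $\mathbf{y}$ components, then all $\mathbf{u}$, then all $\mathbf{p}$ — i.e. the natural ordering for a $3\times 3$ block matrix with bi-level Toeplitz blocks is $T_{\mathbf{n}}(g_{rs})$ assembled in "block-of-Toeplitz" form, whereas the symbol convention in Definition~\ref{def-multilevel}, equation~\eqref{tep_multi_block}, produces "Toeplitz-of-blocks" form $J_{n_1}^{j_1}\otimes J_{n_2}^{j_2}\otimes\hat{\mathbf{f}}_{\mathbf{j}}$. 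These two are related by a permutation similarity (a perfect shuffle), which does not affect eigenvalues, singular values, or GLT/distribution statements; I would note this explicitly and then work with whichever form is convenient. A secondary minor point is verifying the claimed Fourier-coefficient matrices are correct — this is a direct term-by-term comparison using~\eqref{eq:m-bi-toeplitz}--\eqref{eq:k-bi-toeplitz} (e.g. the $(2,2)$ entry of $\hat{\mathbf{f}}_{(0,0)}$ is $\alpha\cdot\tfrac12$, of $\hat{\mathbf{f}}_{(\pm1,\pm1)}$ is $\alpha\cdot\tfrac16\cdot\tfrac12=\tfrac{\alpha}{12}$ after the $2\cos$ normalization in~\eqref{eqn.symbol.f_reduced}, and the $(1,3)$ entry of $\hat{\mathbf{f}}_{(0,0)}$ is the constant term $4$ of $\kappa$), and requires no real ideas.
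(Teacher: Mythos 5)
Your proposal is correct and follows essentially the same route as the paper's proof: permute $\mathcal{A}_N$ to block--Toeplitz form $T_{\mathbf n}(\mathbf f)+E_{\mathbf n}$, read the Fourier coefficients of $\mathbf f$ directly from $m$ and $\kappa$, show the $h^4M$ block gives a zero--distributed Hermitian perturbation, and invoke Tilli's theorem together with \textbf{GLT2}--\textbf{GLT4} and Remark~\ref{rem_simmetria} to conclude. The only cosmetic differences are that you close the argument with \textbf{GLT5} whereas the paper uses \textbf{GLT1} on the Hermitian sum (both valid since $B_N$ is symmetric), and that you bound the trace norm by $h^4\operatorname{tr}(M)=O(h^2)$ rather than by $N\cdot\sigma_{\max}(E_{\mathbf n})$ as in the paper --- your estimate is in fact a bit cleaner, and correctly identifies the magnitudes involved.
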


\begin{proof}
	Let $\mathbf{e}_i$, $i=1,\ldots,N$ be the $i$th column of the identity matrix of size $N$, we can define a proper $N\times N$ permutation matrix,  $\Pi=[P_1|P_2|P_3]$, $P_l\in \mathbb{R}^{N\times n^2},\, l=1,2,3$,  such that the $k$th column of $P_l\, l=1,2,3$, is $e_{l+3(k-1)}$. The matrix $\Pi$ transforms $\mathcal{A}_N$ as 
	\begin{equation}\label{eq:bn_permuted_an}
	B_N=\Pi \mathcal{A}_N \Pi^T=T_{\textbf{n}}(\textbf{f})+E_{\textbf{n}},
	\end{equation}
	where
	\begin{itemize}
		\item $T_{\textbf{n}}(\textbf{f})$ is the bi-level $3\times 3$ block Toeplitz
		$
		T_{\bf{n}}(\textbf{f}) =\left[\hat{\textbf{f}}_{\textbf{i}-\textbf{j}}\right]_{\textbf{i},\textbf{j}=\textbf{e}}^{\bf n}\in\mathbb{C}^{N\times N}
		$
		generated by $\textbf{f}:[-\pi,\pi]^2\rightarrow \mathbb{C}^{3\times 3}$ as in~\eqref{eq:symbol},
		\item $E_\textbf{n}$ is a small-norm matrix, with $||E_\textbf{n} ||<C, $  $C$ constant depending on the bandwidths of $B_N$ and $N^{-1}\|E_\textbf{n} \|_1\to0$.
	\end{itemize}
	This is a congruence transformation, thus if we find the distribution of the sequence $\{B_N\}_N$ we found also the distribution for the sequence $\{\mathcal{A}_N\}_N$. Let us observe that the nonzero entries of $T_{\bf n}(\textbf{f})=[\hat{ \textbf{f}}_{\textbf{i}-\textbf{j}}]_{\textbf{i},\textbf{j}=\textbf{e}}^{\textbf{n}}$ correspond to the indexes $\textbf{i}=(i_1,i_2),\textbf{j}=(j_1,j_2)$ satisfying
	\[\{|i_1-j_1|+|i_2-j_2|\le1\} \cup \{i_1=i_2=j_1=j_2=1\}\cup\{i_1=i_2=j_1=j_2-1\},\] 
	as shown in equation~\eqref{toeplitz}, for $\textbf{n}=(3,3)$ we find $T_{\bf n}(\textbf{f})$
	\begin{equation}\label{toeplitz}
	{\arraycolsep=1pt\def\arraystretch{0.5}
		\left[\begin{array}{ccc|ccc|ccc}
		
		\hat{\textbf{f}}_{(0,0)} & \hat{\textbf{f}}_{(0,-1)} & 0 &\hat{\textbf{f}}_{(-1,0)} & \hat{\textbf{f}}_{(-1,-1)} & 0 & 0 & 0 & 0  \\
		&         &    &  &  &    &  & &\\ 
		\hat{\textbf{f}}_{(0,1)} & \hat{\textbf{f}}_{(0,0)} & \hat{\textbf{f}}_{(0,-1)}& 0 &\hat{\textbf{f}}_{(-1,0)} & \hat{\textbf{f}}_{(-1,-1)} & 0 & 0 & 0\\
		&         &    &  &  &    &  & &\\
		
		0 & \hat{\textbf{f}}_{(0,1)} & \hat{\textbf{f}}_{(0,0)} & 0 & 0 & \hat{\textbf{f}}_{(-1,0)}&0& 0 & 0\\
		&         &    &  &  &    &  & &\\
		\hline
		&         &    &  &  &    &  & &\\
		\hat{\textbf{f}}_{(1,0)} & 0 & 0 &\hat{\textbf{f}}_{(0,0)} & \hat{\textbf{f}}_{(0,-1)} & 0 & \hat{\textbf{f}}_{(-1,0)} & \hat{\textbf{f}}_{(-1,-1)}& 0  \\
		&         &    &  &  &    &  & &\\
		\hat{\textbf{f}}_{(1,1)} & \hat{\textbf{f}}_{(1,0)} & 0 &  \hat{\textbf{f}}_{(0,1)} & \hat{\textbf{f}}_{(0,0)} & \hat{\textbf{f}}_{(0,-1)} & 0 &\hat{\textbf{f}}_{(-1,0)}& \hat{\textbf{f}}_{(-1,-1)}\\
		&         &    &  &  &    &  & &\\ 
		0 & \hat{\textbf{f}}_{(1,1)} & \hat{\textbf{f}}_{(1,0)} &  0 &\hat{\textbf{f}}_{(0,1)} & \hat{\textbf{f}}_{(0,0)} & 0 & 0 & \hat{\textbf{f}}_{(-1,0)} \\
		&         &    &  &  &    &  & &\\   
		\hline
		&         &    &  &  &    &  & &\\
		0 & 0 & 0 & \hat{\textbf{f}}_{(1,0)} & 0 & 0 & \hat{\textbf{f}}_{(0,0)} &\hat{\textbf{f}}_{(0,-1)} & 0  \\
		&         &    &  &  &    &  & &\\
		0 & 0 & 0 &  \hat{\textbf{f}}_{(1,1)} & \hat{\textbf{f}}_{(1,0)} & 0 & \hat{\textbf{f}}_{(0,1)} &\hat{\textbf{f}}_{(0,0)} & \hat{\textbf{f}}_{(0,-1)}\\
		&         &    &  &  &    &  & &\\
		0 & 0 & 0 & 0 &\hat{\textbf{f}}_{(1,1)} & \hat{\textbf{f}}_{(1,0)}  &  0 &\hat{\textbf{f}}_{(0,1)} & \hat{\textbf{f}}_{(0,0)}\\
		\end{array}\right]}
	\end{equation}
	Therefore,  { from~\eqref{eq:symbol}}, the generating function $\textbf{f}$ is given by the finite sum
	\begin{equation}
	\begin{split}
	\textbf{f}(\theta_1 , \theta_2 ) &=  \hat {\textbf{f}}_{(0,0)}+  \hat {\textbf{f}}_{(-1,0)} e^{-\mathbf i \theta_1}+  \hat{ \textbf{f}}_{(0,-1)} e^{-\mathbf i \theta_2}+  \hat {\textbf{f}}_{(1,0)} e^{\mathbf i \theta_1}+  \hat{ \textbf{f}}_{(0,1)} e^{\mathbf i \theta_2}+\\&+ \hat{ \textbf{f}}_{(-1,-1)} e^{-\mathbf i( \theta_1+\theta_2)}+\hat{ \textbf{f}}_{(1,1)} e^{\mathbf i( \theta_1+\theta_2)},
	\end{split}
	\label{eqn.symbol.f}
	\end{equation}
	where $\hat {\textbf{f}}_{(0,0)}, \hat {\textbf{f}}_{(-1,0)},\hat {\textbf{f}}_{(0,-1)},\hat{ \textbf{f}}_{(1,0)},\hat{ \textbf{f}}_{(0,1)}, \hat{ \textbf{f}}_{(1,1)}, \hat{ \textbf{f}}_{(-1,-1)}\in\mathbb{R}^{3\times 3}$, that is $\textbf{f}$ is a linear trigonometric polynomial in the variables $\theta_1$ and $\theta_2$ with matrix coefficients from~\eqref{eq:symbol_coefficients}.
	{Moreover, using the equalities in (\ref{eq:symbol_coefficients}), the symbol in} 	\eqref{eqn.symbol.f} can be readily simplified as
	{
		\begin{equation*}
		\begin{split}
		\textbf{f}(\theta_1 , \theta_2 ) &=  \hat {\textbf{f}}_{(0,0)}+  \hat {\textbf{f}}_{(0,-1)} e^{-\mathbf i \theta_1}+  \hat{ \textbf{f}}_{(0,-1)} e^{-\mathbf i \theta_2}+  \hat {\textbf{f}}_{(0,-1)} e^{\mathbf i \theta_1}+  \hat{ \textbf{f}}_{(0,-1)} e^{\mathbf i \theta_2}+\\&+ \hat{ \textbf{f}}_{(-1,-1)} e^{-\mathbf i( \theta_1+\theta_2)}+\hat{ \textbf{f}}_{(-1,-1)} e^{\mathbf i( \theta_1+\theta_2)}\\
		&= \hat {\textbf{f}}_{(0,0)}+ \hat {\textbf{f}}_{(0,-1)} ( e^{-\mathbf i \theta_1} +   e^{\mathbf i \theta_1}     + e^{-\mathbf i \theta_2}+e^{\mathbf i \theta_2})+ \hat{ \textbf{f}}_{(-1,-1)}(e^{-\mathbf i( \theta_1+\theta_2)}+ e^{\mathbf i( \theta_1+\theta_2)})\\
		=&  \hat{ \textbf{f}}_{(0,0)}+ 2 \hat {\textbf{f}}_{(0,-1)}\left(\cos\theta_1+\cos\theta_2 \right)+2  \hat {\textbf{f}}_{(-1,-1)}\left(\cos(\theta_1+\theta_2)\right).
		\end{split}
		\end{equation*}
	}
	
	{Note, from the latter, that}
	\begin{equation*}
	\textbf{f}^T(\theta_1 , \theta_2 )=  \textbf{f}(\theta_1 , \theta_2 ),
	\end{equation*}
	thus $\textbf{f}$ is a symmetric matrix-valued function which implies that $T_{\bf n}(\textbf{f})$ is a symmetric matrix. By Theorem~\ref{szego-herm}, we conclude that
	\begin{equation}\label{distr_Tn}
	\{T_{\bf n}(\textbf{f})\}_{{\bf n}}\sim_\lambda(\textbf{f},[-\pi,\pi]^2).
	\end{equation}
	While, from \textbf{GLT3}, we know that $\{T_{\bf n}(\textbf{f})\}_{{\bf n}}$ is a GLT sequence with symbol~$\textbf{f}$.
	Moreover, let us observe that $\{E_{\bf n}\}$ is a zero--distributed sequence hence $\{E_{\bf n}\}_{{\bf n}}\sim_\sigma (\textbf{0},\mathcal{I}_2^+)$.
	Indeed,  { $E_{\bf n}$ is the permutation of a matrix that in block position (1,1) collects all the terms that contains the scaling $h^4$, deriving from the (1,1) block of $\mathcal{A}_N$, and $0$ anywhere else. Then it can be written as} $E_{\bf n}=h^4 \tilde{E}_{\bf n}.$
	
	Since the trace norm $\|\cdot\|_1$ of $ \tilde{E}_{\bf n}$ is equal to a constant $C$ independent on ${\bf n}$,  { we have}
	\begin{equation*}
	\lim_{{\bf n}\rightarrow\infty} N^{-1}||E_{\bf n}||_1=\lim_{{\bf n}\rightarrow\infty} N^{-1}\sum_{i=1}^N \sigma_i(E_{\bf n})\le \lim_{{\bf n}\rightarrow\infty} N^{-1} \sigma_{\rm max} (E_{\bf n})N=0,
	\end{equation*}
	and hence the zero--distribution follows from~\textbf{GLT4}. 
	In addition, from \textbf{GLT1} and the fact that $E_{\bf n}$ is Hermitian, $\{E_{\bf n}\}_{{\bf n}}\sim_\lambda (\textbf{0},\mathcal{I}_2^+)$.
	
	The conclusion of the Theorem is then achieved by applying \textbf{GLT2} and~\eqref{distr_Tn}, since this proves that $\{T_{\bf n}(\textbf{f})+E_{\bf n}\}_{{\bf n}\in\mathbb N^2}$ is a GLT sequence with symbol $\mathbf{f}$, i.e., $\{\mathcal{A}_N\}_N\sim_{\rm GLT}\textbf{f}$.
	Consequently, by recalling that $T_{\bf n}(\textbf{f})+E_{\bf n}$ is real symmetric for every $\bf n$ and using  \textbf{GLT1}, we
	deduce that the distribution result holds in the sense of the eigenvalues
	\begin{equation}\label{eig_distr}
	\{B_N\}_N\sim_\lambda(\textbf{f},[-\pi,\pi]^2).
	\end{equation}
	Furthermore, since each $B_N$ is symmetric and its blocks are symmetric and real, then $\textbf{f}$ is such that
	$\textbf{f}(\pm\theta_1,\pm\theta_2)\equiv \textbf{f}(\theta_1,\theta_2)$, $\forall(\theta_1,\theta_2)\in [0,\pi]^2$ and therefore (\ref{eig_distr}) can be rephrased~as
	\begin{equation}\label{eig_distr_piu}
	\{B_N\}_N\sim_\lambda(\textbf{f},\mathcal{I}_2^+).
	\end{equation}
\end{proof}

We can now find a first answer to the questions \emph{Q1} and \emph{Q2}. For $N$ sufficiently large,~let
\begin{equation*}
\lambda_1 (B_N) \le \lambda_2 (B_N)\le \ldots \le \lambda_N (B_N).
\end{equation*}
be the eigenvalues of $B_N$ from~\eqref{eq:bn_permuted_an}, i.e., of $\mathcal{A}_N$. By Remark~\ref{rem_distr}, with $s=3$, and equation~\eqref{eig_distr_piu}, we discover that $N/3=n^2$ eigenvalues of $B_N$, up to a number of outliers infinitesimal in the dimension, can be approximated by a sampling of $\lambda^{(1)}(\textbf{f})$ on an opportune grid (see the following discussion). The next $N/3$ on the second one and the last $n^2$ on the sampling of $\lambda^{(3)}(\textbf{f})$. Moreover, obtaining the following proposition, as a specialized version of Theorem~\ref{thm:rusten_winther}, is straightforward.
\begin{Proposition}\label{pro:eigenbound}
	{
		Let $m_i=\essinf_{\mathcal{I}^+_2} \lambda^{(i)}  (\textbf{f}(\boldsymbol{\theta} ))$ and $M_i=\esssup_{\mathcal{I}^+_2} \lambda^{(i)}  (\textbf{f}(\boldsymbol{\theta} ))$ be the essential infimum and essential supremum of $ \lambda^{(i)}  (\textbf{f}(\boldsymbol{\theta} ))$ respectively, for $i=1,2,3$. Then, for $N$ sufficiently large, } the spectrum $\lambda(\mathcal{A}_N)$ of the matrix sequence $\{\mathcal{A}_N\}_N$ is contained in three intervals
	\begin{equation*}
	\begin{split}
	\lambda(\mathcal{A}_N) \subset I_0^{-} \cup I_1^{+} \cup I_2^{+} = & (\essinf_{\mathcal{I}^+_2} \lambda^{(1)} (\textbf{f}(\boldsymbol{\theta} )),  \esssup_{\mathcal{I}^+_2} \lambda^{(1)}  (\textbf{f}(\boldsymbol{\theta} ))] \\& \cup(\essinf_{\mathcal{I}^+_2} \lambda^{(2)}  (\textbf{f}(\boldsymbol{\theta} )),  \esssup_{\mathcal{I}^+_2} \lambda^{(2)}  (\textbf{f}(\boldsymbol{\theta} ))]\\& \cup[\essinf_{\mathcal{I}^+_2} \lambda^{(3)}  (\textbf{f}(\boldsymbol{\theta} )),  \esssup_{\mathcal{I}^+_2} \lambda^{(3)}  (\textbf{f}(\boldsymbol{\theta} ))) \\
	= & (m_1,M_1] \cup (m_2,M_2] \cup [m_3,M_3),
	\end{split}
	\end{equation*}
	for $\mathcal{I}^+_2 = [0,\pi]^2$.
\end{Proposition}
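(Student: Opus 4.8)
The plan is to read the three intervals directly off the three eigenvalue functions $\lambda^{(1)}(\textbf{f})\le\lambda^{(2)}(\textbf{f})\le\lambda^{(3)}(\textbf{f})$ of the symbol, using Theorem~\ref{thm:spectral_magic} and Remark~\ref{rem_distr} to place the eigenvalues of $\mathcal{A}_N$ near the samples of these functions, and then pinning down the endpoints. Recall from the proof of Theorem~\ref{thm:spectral_magic} that $\mathcal{A}_N$ is orthogonally similar, through the permutation $\Pi$, to $B_N=T_{\textbf{n}}(\textbf{f})+E_{\textbf{n}}$ with $T_{\textbf{n}}(\textbf{f})$ Hermitian and $E_{\textbf{n}}=h^{4}\tilde E_{\textbf{n}}$, $\tilde E_{\textbf{n}}$ of uniformly bounded spectral norm, so that $\lambda(\mathcal{A}_N)=\lambda(B_N)$ and $\|E_{\textbf{n}}\|_{2}\to 0$.

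First I would analyse $\textbf{f}(\boldsymbol{\theta})$ pointwise: from~\eqref{eqn.symbol.f_reduced}--\eqref{eq:symbol_coefficients} the matrix $\textbf{f}(\boldsymbol{\theta})$ has trace $\alpha\,m(\boldsymbol{\theta})>0$ and determinant $-\alpha\,m(\boldsymbol{\theta})\,\kappa(\boldsymbol{\theta})^{2}\le 0$, with $m,\kappa$ as in~\eqref{eq:m-bi-toeplitz}--\eqref{eq:k-bi-toeplitz}; since $m>0$ and $\kappa(\boldsymbol{\theta})>0$ for $\boldsymbol{\theta}\neq\mathbf{0}$, the characteristic cubic has exactly one negative and two nonnegative roots for a.e.\ $\boldsymbol{\theta}$, whence $\lambda^{(1)}(\textbf{f})<0$ and $\lambda^{(2)}(\textbf{f}),\lambda^{(3)}(\textbf{f})\ge 0$ on $\mathcal{I}^+_2$. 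By continuity on the compact $\mathcal{I}^+_2$ this yields $m_1\le M_1<0$, $0\le m_2\le M_2$ and $0<m_3\le M_3$ (with $m_2=0$ attained only at the origin, where $\kappa$ vanishes), which fixes the sign pattern $I_0^{-}\subset\mathbb{R}_-$, $I_1^{+},I_2^{+}\subset\mathbb{R}_+$; should one also want the three intervals pairwise disjoint, it remains to check $M_2<m_3$, i.e.\ that $\lambda^{(2)}(\textbf{f})$ and $\lambda^{(3)}(\textbf{f})$ stay separated, which follows from the explicit cubic.

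Next I would localize $\lambda(T_{\textbf{n}}(\textbf{f}))$. The localization bound for Hermitian (block) Toeplitz sequences — the matrix-valued Grenander--Szeg\H{o} estimate underlying Theorem~\ref{szego-herm} — gives $\lambda(T_{\textbf{n}}(\textbf{f}))\subset[m_1,M_3]$, strict at the outer endpoints since $\textbf{f}$ is not essentially constant, so $\lambda(T_{\textbf{n}}(\textbf{f}))\subset(m_1,M_3)$; that the eigenvalues actually cluster on the three ranges $[m_i,M_i]$ and leave the gaps between them asymptotically empty follows from Theorem~\ref{thm:spectral_magic} with $s=3$ together with Remark~\ref{rem_distr} (testing~\eqref{eig_distr_piu} against an $F\in\mathcal{C}_0$ supported in a prospective gap forces the proportion of eigenvalues there to $0$), while the exact count — $n^2$ in $I_0^{-}$ and $2n^2$ in $I_1^{+}\cup I_2^{+}$ — is dictated by the inertia $(2n^2,n^2,0)$ of $\mathcal{A}_N$, valid because its $(1,1)$ block $\diag(h^{4}M,\alpha M)$ is symmetric positive definite, $C=0$, and $(K^{T},-M)$ has full rank. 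Transferring to $B_N=\mathcal{A}_N$ by Weyl's perturbation inequality and using $\|E_{\textbf{n}}\|_{2}\to 0$, for $N$ large $\lambda(\mathcal{A}_N)$ sits in the union of the three closed ranges, the half-open form $(m_1,M_1]\cup(m_2,M_2]\cup[m_3,M_3)$ recording the strict outer bounds and the safe inclusion of the internal cluster boundaries. As a cross-check, and in the spirit of a ``specialized version'' of Theorem~\ref{thm:rusten_winther}, applying that theorem directly to $\mathcal{A}_N$ already gives an exact negative/positive splitting $\lambda(\mathcal{A}_N)\subset I^{-}\cup I^{+}$ whose endpoints, through Theorem~\ref{th:esercizio_libro} applied to $M$ and to $K^{2}+M^{2}$, converge to quantities written via $\essinf/\esssup$ of $m$ and $\kappa$; the GLT analysis above is what further resolves $I^{+}$ into $I_1^{+}$ and $I_2^{+}$.

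I expect the delicate point to be the ``no leakage into the gaps'' claim at finite $N$: the distribution relation only forces an $o(N)$, not a zero, eigenvalue count in a fixed sub-gap, and the vanishing-but-nonzero $E_{\textbf{n}}$ can a priori nudge eigenvalues of $T_{\textbf{n}}(\textbf{f})$ lying near $M_1,M_2,m_2$ or $m_3$ slightly out of the ranges. This is exactly why the statement carries the ``for $N$ sufficiently large'' qualifier and half-open intervals; a fully rigorous version would combine the strict Toeplitz localization with the inertia bookkeeping of the saddle-point form — counting, for $\mu$ in a sub-gap, the inertia of $\mathcal{A}_N-\mu I$ — to rule out the finitely many potential outliers.
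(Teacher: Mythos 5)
Your overall route matches the paper's: analyse the ordering and signs of the eigenvalue functions $\lambda^{(1)}\le\lambda^{(2)}\le\lambda^{(3)}$ of the symbol $\textbf{f}$, localize $\lambda(T_{\bf n}(\textbf{f}))$ in the union of the three ranges $[m_i,M_i]$, and then carry this over to $\mathcal{A}_N=B_N$ (up to similarity) using the smallness of $E_{\bf n}$. Your trace/determinant computation for $\textbf{f}$ (trace $\alpha m>0$, determinant $-\alpha m\kappa^2\le0$, hence one negative and two nonnegative roots a.e.) is a nice, self-contained way to establish what the paper gets by ``matching'' with Theorem~\ref{thm:rusten_winther}; your Weyl-inequality handoff and the inertia $(2n^2,n^2,0)$ of $\mathcal{A}_N$ make the transfer and the negative/positive eigenvalue count more explicit than the paper's terse ``asymptotically ... also involving the small norm correction''.

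The genuine gap is in the middle step. The paper's proof hinges on a specific cited localization theorem for block Toeplitz matrices with Hermitian matrix-valued symbol, namely \cite[Theorem~2.3]{locspsb1}, which directly gives the three-interval inclusion $\lambda(T_{\bf n}(\textbf{f}))\subset(m_1,M_1]\cup(m_2,M_2]\cup[m_3,M_3)$ for every $\mathbf{n}$. Your replacement of this with ``the matrix-valued Grenander--Szeg\H{o} estimate underlying Theorem~\ref{szego-herm}'' does not do the same work: that kind of estimate yields only the global bound $\lambda(T_{\bf n}(\textbf{f}))\subset[m_1,M_3]$, and Theorem~\ref{szego-herm} itself is a distribution result, which (as you yourself note at the end) can at best force an $o(N)$ eigenvalue count in a fixed sub-gap, not exclusion of eigenvalues from the gap. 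Your inertia argument recovers the exact split of $n^2$ eigenvalues below zero and $2n^2$ at or above, but it says nothing about where $I_1^+$ ends and $I_2^+$ begins. So as written, the step from ``distribution + global bound + inertia'' to ``no eigenvalues in $(M_1,m_2)$ or $(M_2,m_3)$'' is not justified; you would need either to import a dedicated block-Toeplitz localization theorem (as the paper does) or to carry out the finite-$N$ inertia bookkeeping you sketch in your last paragraph — counting the inertia of $\mathcal{A}_N-\mu I$ for $\mu$ in a sub-gap — in full. It is also worth noting that this gap in your proof is shared, in a diluted form, by the paper: the paper's final sentence appeals to ``asymptotic'' validity, and the paper's own Table~\ref{tab:outliers_m2_M2} and Figure~\ref{autovalori_3blocchi} report $O(\sqrt{N})$ eigenvalues falling outside $(m_2,M_2]$, so the stated inclusion really holds only up to a $o(N)$ number of outliers, as also acknowledged in the conclusions.
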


\begin{proof}
	{From the definition of $\textbf{f}$ in~\eqref{eqn.symbol.f_reduced}}, $\forall\,  (\theta_1,\theta_2) \in [0,\pi]^2$,  { and matching with the classical analysis for saddle--point matrices in Theorem~\ref{thm:rusten_winther}, we find}
	\begin{equation}\label{lambda_1vsall}
	\left( \lambda^{(1)} (\textbf{f})\right)(\theta_1,\theta_2)<0 \le\left(\lambda^{(2)}   (\textbf{f})\right)(\theta_1,\theta_2)<\left(\lambda^{(3)}   (\textbf{f})\right)(\theta_1,\theta_2),
	\end{equation}
	i.e.,
	\begin{equation}\label{M_3}
	M_1<m_2, \qquad
	M_2<m_3.
	\end{equation}
	and
	\begin{equation}\label{eqn:sets}
	\begin{split}
	\esssup_{\mathcal{I}^+_2}\lambda^{(1)}  (\textbf{f}(\boldsymbol{\theta} ))&\le \essinf_{\mathcal{I}^+_2} \lambda^{(2)}  (\textbf{f}(\boldsymbol{\theta})),\\
	\esssup_{\mathcal{I}^+_2}\lambda^{(2)}  (\textbf{f}(\boldsymbol{\theta}))&\le \essinf_{\mathcal{I}^+_2} \lambda^{(3)}  (\textbf{f}(\boldsymbol{\theta})).
	\end{split}
	\end{equation}
	{From ~\cite[Theorem~2.3]{locspsb1}, we know that the thesis holds true for $T_{\bf n}(\textbf{f})$ and, from the relation $\{\mathcal{A}_N\}_N\sim_\lambda(\textbf{f},[0,\pi]^2)$ of Theorem \ref{thm:spectral_magic}, we have that asymptotically the inclusion in (\ref{eqn:sets}) is valid, also involving the small norm correction. }
\end{proof}

To deliver an actual numerical estimate for these bounds what we need is a reasonable approximation of the eigenvalue functions $\lambda^{(l)} ({\bf f})$, $l=1,2, 3$, following the procedure from Remark ~\ref{sec:computations_with_the_eigenvalue_functions} and exploiting Theorem~\ref{thm:spectral_magic}, we define the following equispaced grid on~$\mathcal{I}^+_2$
\begin{equation*}
\boldsymbol{\theta}_{\bf n-e}=\left\{(\theta_{n-1}^{(j)},\theta_{n-1}^{(k)})=\left(\frac{j\pi}{n},\frac{k\pi}{n}\right), \qquad j,k=0, \ldots, n-1\right\},
\end{equation*}
and consider the following $n^2$ Hermitian matrices of size $3\times3$
\begin{equation}
A_{j,k}:=\textbf{f}(\theta_{n-1}^{(j)},\theta_{n-1}^{(k)}), \quad j,k=0, \ldots, n-1.
\end{equation}
Ordering in ascending way the eigenvalues of $A_{j,k}$
\begin{equation*}
\lambda_1 (A_{j,k}) \le \lambda_2  (A_{j,k}) \le \lambda_3  (A_{j,k}), \quad j,k=0,\ldots,n-1,
\end{equation*}
for any $l=1,2,3$, an evaluation of $\lambda^{(l)}(\mathbf{f})$ at $(\theta_1^{(j)},\theta_2^{(k)})$ is given by $\lambda_l  (A_{j,k})$, $j,k=1,\ldots,n$.  For a fixed $l$, we denote the vector of all eigenvalues $\lambda_{l}  (A_{j,k})$, $j,k=0,\ldots,n-1$ as $\mathbf{P}^{(n)}_l$ , i.e.,
\begin{equation*}
\mathbf{P}^{(n)}_l:=\left[\lambda_l (A_{0,0}), \lambda_l  (A_{0,1}), \ldots ,\lambda_l  (A_{n-1,n-1})\right],
\end{equation*}
and by $\mathbf{P}^{(n)}$ the vector of all eigenvalues $\lambda_l  (A_{j,k})$, $j,k=0,\ldots,n-1$ varying $l$, i.e.,
\begin{equation*}
\mathbf{P}^{(n)}:=\left[\lambda_1 (A_{0,0}),\ldots, \lambda_1  (A_{n-1,n-1}),\ldots,\lambda_3 (A_{0,0}),\ldots,\lambda_3  (A_{n-1,n-1})\right].
\end{equation*}

Note that, refining the grid by increasing $n$, we can provide the evaluation of the eigenvalue functions of $\textbf{f}$ in a larger number of
grid points: numerical evidences of this fact are reported in Figure~\ref{n=5_n=10andn=6_n=12}, 
\begin{figure}[htbp]
	\subfloat[$n=5$]{\includegraphics[width=.5\columnwidth]{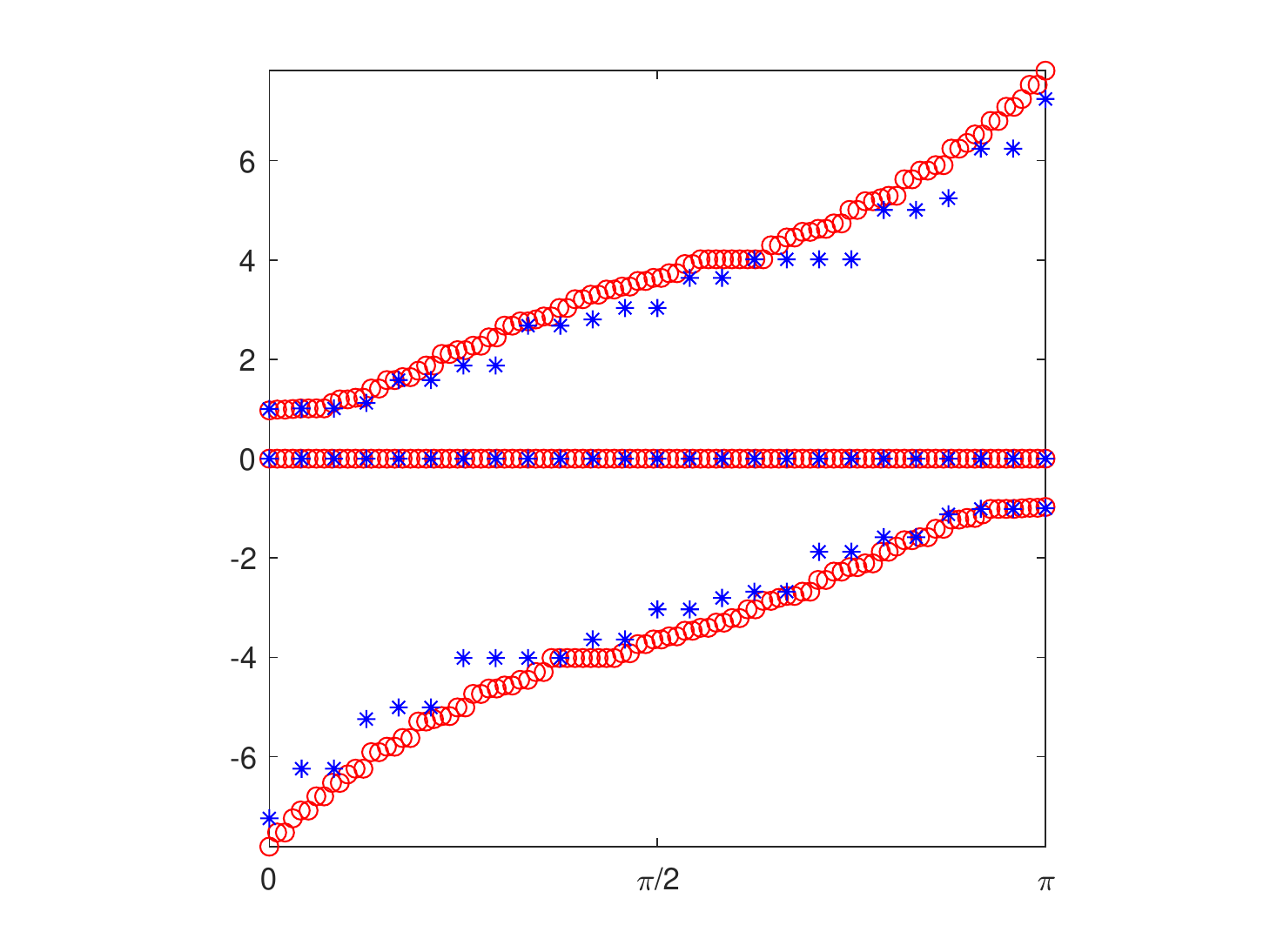}\label{n=5_n=10}}
	\subfloat[$n=6$]{\includegraphics[width=.5\columnwidth]{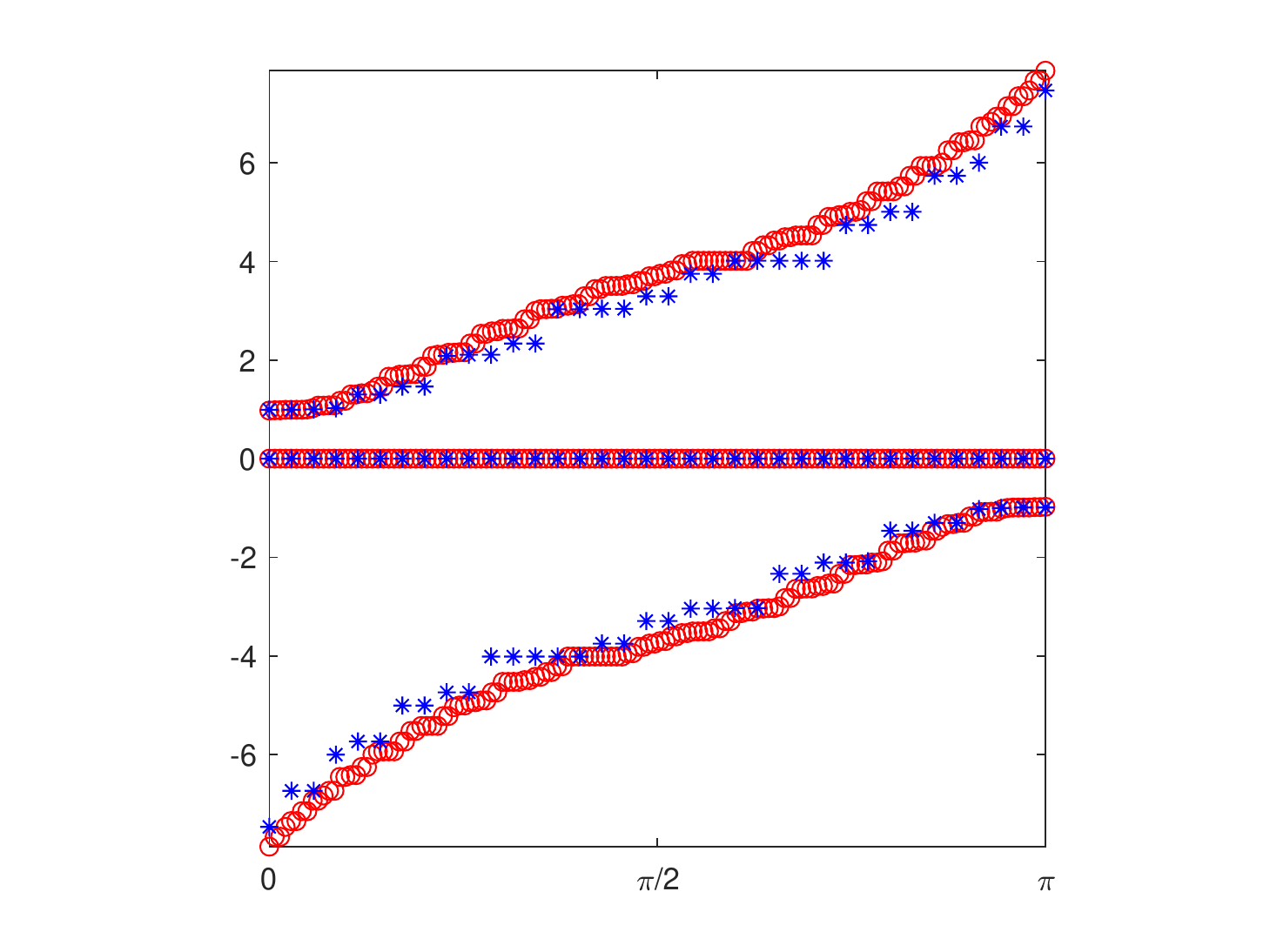}\label{n=6_n=12}}
	\caption{Comparison between the evaluation of the eigenvalue functions $\lambda^{(l)}(\mathbf{f})$, $l=1,2,3$, ordered in ascending way, on the grid $\boldsymbol{\theta}_{\bf n-e}$ contained in $P_l^{(n)}$ (${\color{red}\circ}$) and the corresponding evaluation on the grid twice as fine $\boldsymbol{\theta}_{\bf 2n-e}$ contained in $P_l^{(2n)}$ (${\color{blue}\ast}$). Each \lq curve' refers to a different value of $l$. The parameter $n$ equals $5$ and $6$ in subplots (a) and (b), respectively}\label{n=5_n=10andn=6_n=12}
\end{figure}
in which we compare the approximation of $\lambda^{(l)}(\textbf{f})$ on $\boldsymbol{\theta}_{\bf n}$, $n=5,6$ contained in $\mathbf{P}^{(n)}_l$ (ordered in ascending way) with the approximation of the same eigenvalue function on a grid that is twice as fine $\boldsymbol{\theta}_{\bf 2n-e}$, $n=5,6$ contained in $\mathbf{P}^{(2n)}_l$ (ordered in ascending way as well) for every $l=1,2,3$.

Then, for $n$  sufficiently large, if we order in ascending way $\mathbf{P}^{(n)}_l$, its extremes satisfy the following relations
\begin{equation*}
(\mathbf{P}^{(n)}_l)_1\approx m_l,\qquad (\mathbf{P}^{(n)}_l)_{n^2}\approx M_l, \quad l=1,2,3,
\end{equation*}
and we can can compute a satisfactory approximation of the $\{m_l,M_l\}_{l=1}^{3}$ from Proposition~\ref{pro:eigenbound}, e.g., by setting $n=3\cdot 10^3$, and $\alpha=\texttt{1.0e-04}$, we obtain the following approximations
\begin{align*}
\{m_1,M_1\}&\approx \{-8.006939205138657,-0.971179393341684\},\\
\{m_2,M_2\}&\approx \{0,0.00006086664699\},\\
\{m_3,M_3\}&\approx \{0.971268643759555,8.006939262908668\}.
\end{align*}
This clearly matches with the fact that the matrix--valued symbol is analytically singular in $(0,0)$, i.e.,
\begin{equation*}
\textbf{f}(0,0)=\begin{bmatrix}
0 & 0 & 0\\
0 & \alpha& -1\\
0&-1 & 0
\end{bmatrix},
\end{equation*}
hence $m_2=0$, nevertheless we stress again that this is not in contradiction with the fact that  $\mathcal{A}_N$ is non singular.

In conclusion, we can exploit Remark~\ref{rem_distr}, to provide an answer to \emph{Q2} determining how many eigenvalues are asymptotically contained in each of the three blocks. According to the relations~\eqref{eig_distr_piu}, \eqref{M_3} we expect the eigenvalues of $B_N$ to verify
\begin{equation} \label{rel_aut}
\begin{split}
\# \left\{ i \, : \, \lambda_i(B_N) \in (m1,M_1]\right\}&=\frac{3n^2}{3}+o(3n^2),\\
\# \left\{ i \, : \, \lambda_i(B_N) \in (m_2,M_2]\right\}&=\frac{3n^2}{3}+o(3n^2),\\
\# \left\{ i \, : \, \lambda_i(B_N) \in [m_3,M_3)\right\}&=\frac{3n^2}{3}+o(3n^2).
\end{split}
\end{equation}
and then to identify $3$ blocks
\begin{align*}
{\rm Bl}_1&=\left[\lambda_1 (B_N), \ldots,\lambda_{n^2}(B_N)\right],\\
{\rm Bl}_2&=\left[\lambda_{n^2+1} (B_N), \ldots,\lambda_{2n^2}(B_N)\right],\\
{\rm Bl}_3&=\left[\lambda_{2n^2+1} (B_N), \ldots,\lambda_{3n^2}(B_N)\right].
\end{align*}
Correspondingly, we can split the vector $\mathbf{P}^{(n)}$ containing the sampling of the eigenvalue functions on $\boldsymbol{\theta}_{\bf n-e}$ as follows
\begin{align*}
{\rm Eval}_1&= [(\mathbf{P}^{(n)})_{1}, \ldots, (\mathbf{P}^{(n)})_{n^2}],\\
{\rm Eval}_2&= [(\mathbf{P}^{(n)})_{n^2+1},\ldots, (\mathbf{P}^{(n)})_{2n^2}],\\
{\rm Eval}_3&= [(\mathbf{P}^{(n)})_{2n^2+1}, \ldots, (\mathbf{P}^{(n)})_{3n^2}].
\end{align*}
We stress again that~\eqref{rel_aut} allows for a number of outliers that is infinitesimal in the dimension $N$.

For example, for ${\bf n}=(n,n)=(40,40)$ ($N=4800 $), approximately $\frac{3n^2}{3}=1600$ eigenvalues should be in each block, by a straightforward numerical check one obtains
\begin{align}\label{rel_aut80}
\begin{split}
\# \left\{ i \, : \, \lambda_i(B_N) \in (m1_,M_1]\right\}=1600,\\
\# \left\{ i \, : \, \lambda_i(B_N) \in (m_2,M_2]\right\}=1421,\\
\# \left\{ i \, : \, \lambda_i(B_N) \in [m_3,M_3)\right\}=1600.
\end{split}
\end{align}
Therefore, we expect from that a certain number of eigenvalues of $B_N$ are in none of the blocks; in the example the effective $1421$ eigenvalues against the expected $1600$ in the second block. This is confirmed again by  Figure~\ref{autovalori_3blocchi} in which we highlight represent in blue the whole spectrum of $B_N$ and highlight in black the outliers not belonging to the blocks.
\begin{figure}[htb]
	\centering
	\includegraphics[width=0.65\columnwidth]{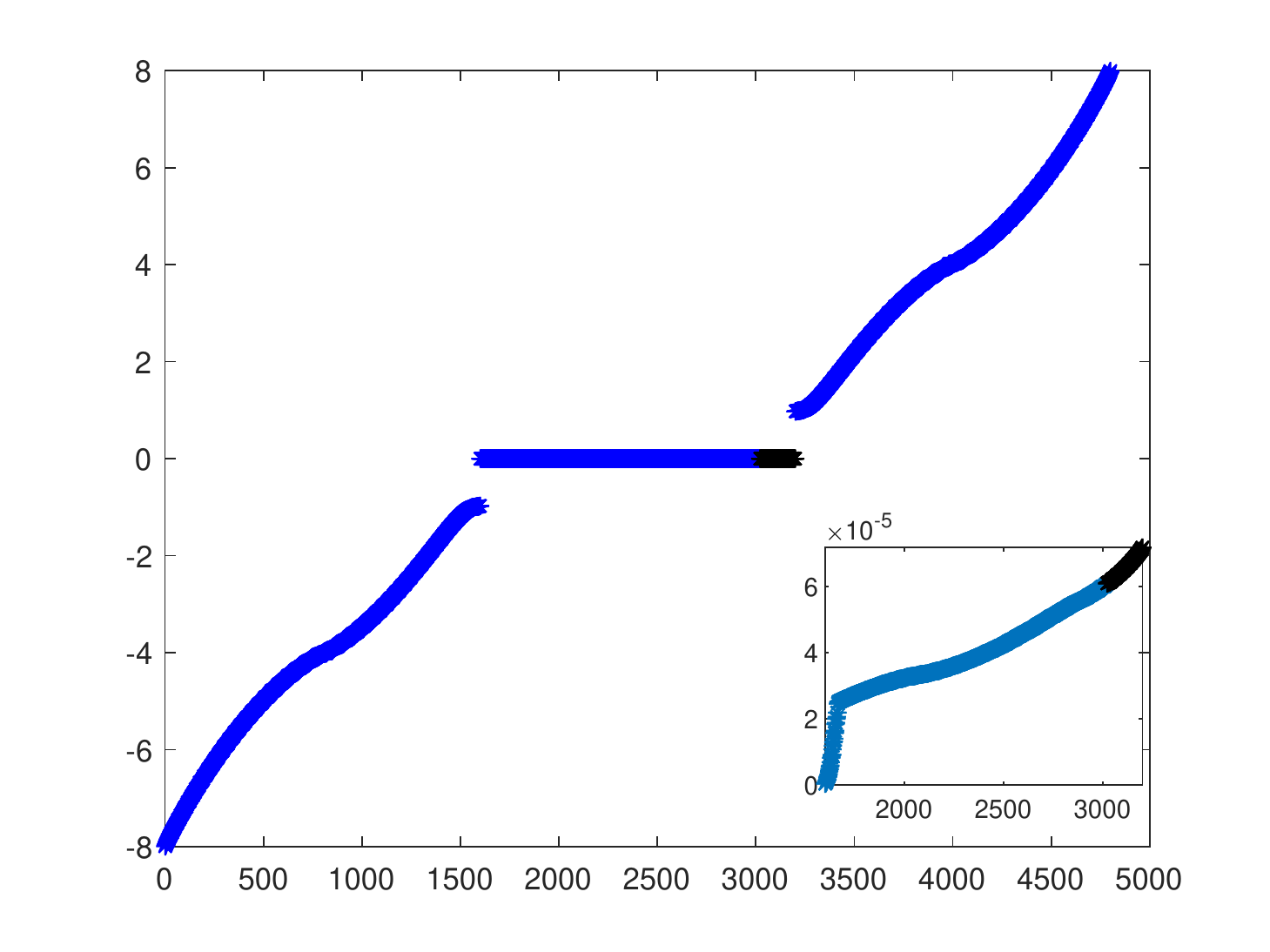}
	\caption{Eigenvalues of $B_N$ for ${\bf n}=(n,n)=(40,40)$ ({\color{black}$\ast$}) together with the eigenvalues of $B_N$ satisfying one of the relations \eqref{rel_aut} ({\color{blue}$\ast$}), for $\alpha=\texttt{1.0e-04}$}\label{autovalori_3blocchi}
\end{figure}
On the other hand, such a phenomenon is in line with \eqref{rel_aut}, since the order of what is missing/exceeding is infinitesimal in the dimension $N$. As an example, in Table~\ref{tab:outliers_m2_M2} we compare the actual number of eigenvalues of $B_N$ contained in the second interval $(m_2,M_2]$ with the expected number $n^2$. In such way, we succeed in counting the outliers of $B_N$ in $(m_2,M_2]$, whose cardinality behaves as $O(\sqrt{3n^2})$.
\begin{table}[htb]
	\centering
	\begin{tabular}{*5{c}}
		\toprule
		$n$&$\#\{\lambda \in (m_2,M_2]\}$& $n^2$& $\#\{\lambda \notin (m_2,M_2]\}$ &$\#\{\lambda \notin (m_2,M_2]\}/\sqrt{3n^2}$\\
		\midrule
		10 & 74&100&26&$0.086$\\
		20& 353 &400&47&$0.039$\\
		40& 1421&1600&179&$0.037$\\
		80& 5694&6400&706&$0.036$\\
		\bottomrule
	\end{tabular}
	\caption{Comparison of the effective number of eigenvalues of $B_N$ contained in the second interval $(m_2,M_2]$ with the expected number $n^2$}\label{tab:outliers_m2_M2}
\end{table}
{A further and more natural evidence of relation~\eqref{eig_distr_piu} can be obtained by comparing block by block the eigenvalues of $B_N$ with the sampling of the eigenvalue functions of $\textbf{f}$, that is comparing Bl$_1$, Bl$_2$, Bl$_3$,  with Eval$_1$, Eval$_2$, Eval$_3$, respectively. Indeed we want to compare the eigenvalues of $B_N$ (properly ordered) with the evaluation of $\lambda^{(l)}(\mathbf{f})$ $l=1,2,3$ at $\boldsymbol{\theta}_{\bf n-e}$, using the values that are present in the blocks of~$\mathbf{P}^{(n)}$.}

More precisely, we compare the elements of Eval$_t$  
with the elements of Bl$_t$ by means of the following  {matching algorithm}:
\begin{itemize}
	\item save the couples $(\theta_{n-1}^{(j_t)},\theta_{n-1}^{(k_t)})$ of $\boldsymbol{\theta}_{\bf n-e}$ to which the elements of Eval$_t$ are associated with;
	{
		\item for a fixed $\lambda\in {\rm Bl}_t$ find $\tilde \eta\in{\rm Eval}_t$ such that
		\begin{equation*}
		\tilde \eta= \arg\min_{\eta\in{\rm Eval}_t}\|\lambda-\eta\|;
		\end{equation*}
		\item associate $\lambda$ to the couple $(\theta_{n-1}^{(j_t)},\theta_{n-1}^{(k_t)})$ corresponding to $\tilde \eta$.
	}
\end{itemize}
Making use of the previous algorithm, in Figure~\ref{approx_2d}, we compare the eigenvalues of $B_N$ with $\lambda^{(l)}(\mathbf{f})$, $l=1,2,3$ displayed as a mesh on $\boldsymbol{\theta}_{\bf n-e}$, for $n=40$.
The eigenvalues of $B_N$ mimic, up to some outliers shown in the Figure~\ref{approssimazioni_lambda2_n=40}, the sampling of the eigenvalue functions,  {numerically confirming the result given in Theorem~\ref{thm:spectral_magic}.}

\begin{figure}[htb]
	\subfloat[$\lambda^{(1)}(\mathbf{f})$]{
		\includegraphics[width=.30\columnwidth]{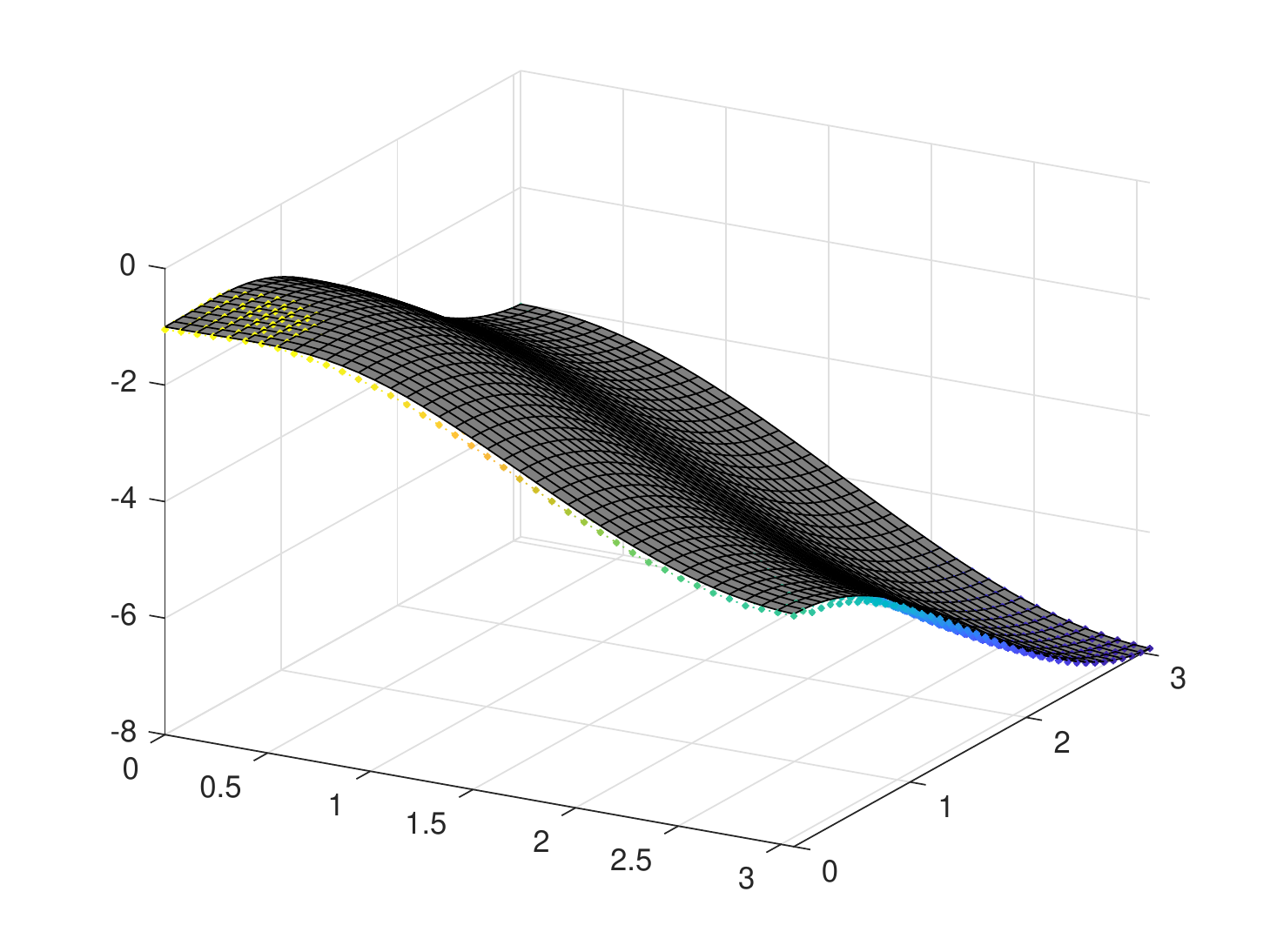}
		\label{approssimazioni_lambda1_n=40}}
	\hfill
	\subfloat[$\lambda^{(2)}(\mathbf{f})$]{
		\includegraphics[width=.30\columnwidth]{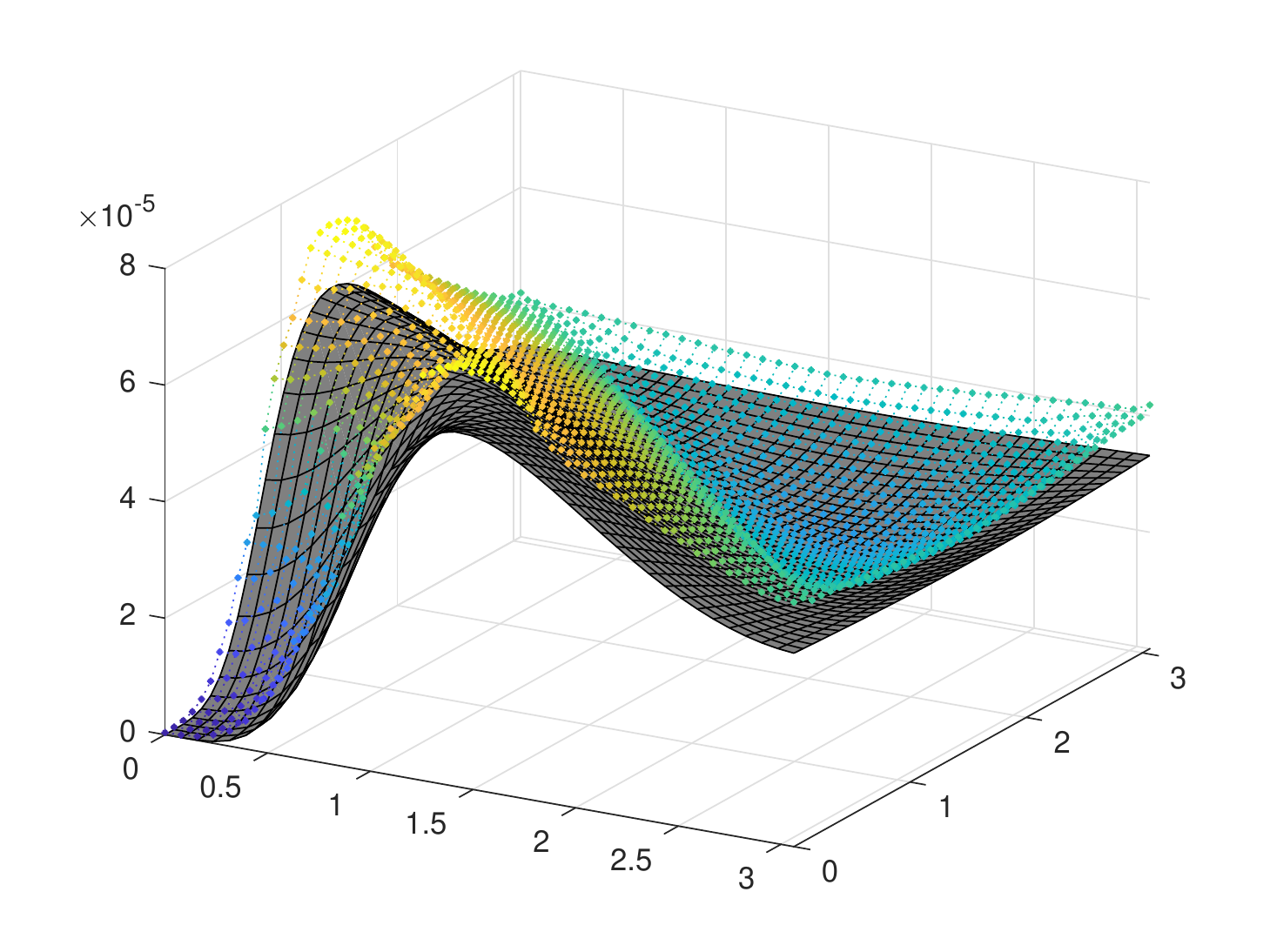}
		\label{approssimazioni_lambda2_n=40}}
	\hfill
	\subfloat[$\lambda^{(3)}(\mathbf{f})$]{
		\includegraphics[width=.30\columnwidth]{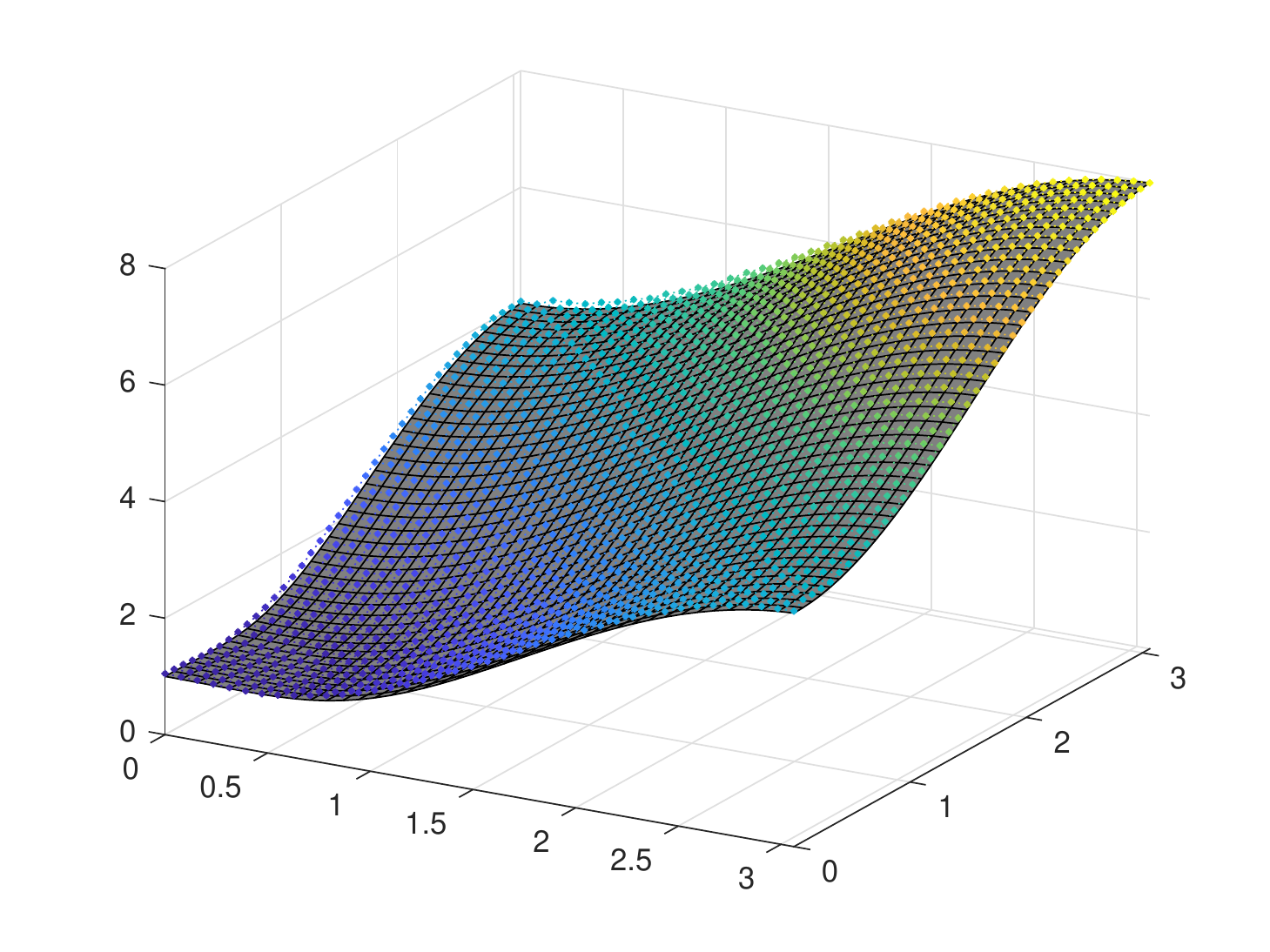}
		\label{approssimazioni_lambda3_n=40}}
	\caption{Comparison between the eigenvalues of $B_N$ and $\lambda^{(l)}(\textbf{f})$, $l=1,2,3$ displayed as a mesh on $\boldsymbol{\theta}_{\bf n-e}$, when $n=40$}\label{approx_2d}
\end{figure}

\subsection{From Poisson to advection-diffusion equations}
\label{sec:Study_BN_perm_non_herm}
We have built the whole construction using as constraint the Poisson differential equation, this is not restrictive since the analysis
can be transparently extended to encompass constraints given by a generic elliptic differential equations, i.e.,
\begin{equation}\label{eq:adevection_diffusion_reaction}
- \nabla^2 y +  \mathbf{c}\cdot\nabla y + r y = z.
\end{equation}
The matrix sequence~\eqref{eq:the_linear_system} maintains the same $3\times3$ block structure, but with a different (1,3) and (3,1) block $\bar{Z}$. The latter,
whenever $\mathbf{c} = (c_1,c_2) \neq 0$, is no more symmetric since the new constraint is no more self--adjoint. Specifically, the new block $\bar{Z}$ can be decomposed into the sum of three terms,
\begin{equation*}
\bar{Z} = \bar{K} + \bar{V} + \gamma \bar{M}, \qquad (\bar{V})_{i,j} = \int_{\tau_h} (\mathbf{c} \cdot \nabla \phi_i) \phi_j d\mathbf{x},
\end{equation*}
with  $V \neq V^T$. Therefore, the relative scaled version is given by
\begin{equation}
\mathcal{S}_N = \mathcal{D}^{(1)}_N \bar{\mathcal{S}}_N \mathcal{D}^{(2)}_N = \begin{bmatrix}
h^4  {M} & O &  {Z}^T \\
O & \alpha  {M}  & - {M} \\
{Z} & - {M} & O \\
\end{bmatrix}, \quad Z = K + h V + h^2 M.\label{eq:spectrally_analyzed_matrix_non_hermitian}
\end{equation}
By means of a GLT perturbation argument from Section~\ref{sec:background_and_definitions},  {and exploiting the analysis in \cite[Section~7.4]{SerraLibro2} for the presence of lower order differential terms}, we can obtain again a characterization {of the eigenvalues of $\mathcal{S}_N$ in~\eqref{eq:spectrally_analyzed_matrix_non_hermitian}} that is analogous to the one we gave in Theorem~\ref{thm:spectral_magic}. 

\begin{Proposition}\label{pro:non-hermitian-case}
	The matrix sequence $\{\mathcal{S}_N\}_N$ from~\eqref{eq:spectrally_analyzed_matrix_non_hermitian} is distributed in the eigenvalue sense as the matrix--valued function $\mathbf{f}$ from Theorem~\ref{thm:spectral_magic}.
\end{Proposition}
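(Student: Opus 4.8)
The plan is to reduce the non-Hermitian matrix sequence $\{\mathcal{S}_N\}_N$ to the already-analyzed Hermitian sequence $\{\mathcal{A}_N\}_N$ via a GLT perturbation argument, and then to recover the eigenvalue distribution using the Hermitian correction machinery of \textbf{GLT5}. First I would apply the same permutation $\Pi$ used in the proof of Theorem~\ref{thm:spectral_magic} to obtain $\Pi\mathcal{S}_N\Pi^T$, whose $(1,1)$, $(1,2)$, $(2,1)$, $(2,2)$ blocks are unchanged, and whose $(1,3)$ and $(3,1)$ blocks now involve $Z = K + hV + h^2 M$ (and its transpose). Since $K = T_{\bf n}(\kappa)$ and $M = T_{\bf n}(m)$ are Toeplitz, $\{K\}_{\bf n}$ and $\{h^2 M\}_{\bf n}$ are GLT sequences with symbols $\kappa$ and $0$ respectively. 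The term $hV$ comes from the first-order differential operator $\mathbf{c}\cdot\nabla$; by the analysis of lower-order terms in \cite[Section~7.4]{SerraLibro2}, $\{V\}_{\bf n}$ is a GLT sequence, and because of the extra factor $h$ — reflecting that a first derivative scales one power of $h$ lower than the Laplacian — the sequence $\{hV\}_{\bf n}$ has GLT symbol $0$. Hence $\{Z\}_{\bf n}\sim_{\rm GLT}\kappa$, exactly as $\{K\}_{\bf n}$.

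Next I would assemble the block sequence. By \textbf{GLT2} (the $*$-algebra property, applied blockwise after permutation) together with the zero-distributed correction $E_{\bf n}$ from Theorem~\ref{thm:spectral_magic}, the sequence $\{\Pi\mathcal{S}_N\Pi^T\}_N$ is a GLT sequence whose symbol is obtained by replacing, in $\textbf{f}(\theta_1,\theta_2)$, the $(1,3)$ and $(3,1)$ entries built from $\kappa$ by the same entries built from the symbol of $Z$ — but since $Z$ has the same GLT symbol $\kappa$ as $K$, the resulting block symbol is exactly $\textbf{f}$ from \eqref{eqn.symbol.f_reduced}. Therefore $\{\mathcal{S}_N\}_N\sim_{\rm GLT}\textbf{f}$, and in particular (by \textbf{GLT1}) $\{\mathcal{S}_N\}_N\sim_\sigma(\textbf{f},\mathcal{I}_2)$.

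The delicate point is passing from the singular-value distribution to the eigenvalue distribution, because $\mathcal{S}_N$ is not Hermitian (indeed $V\neq V^T$), so \textbf{GLT1} does not directly give the eigenvalue distribution. Here I would invoke \textbf{GLT5}: write $\mathcal{S}_N = \mathcal{X}_N + \mathcal{Y}_N$, where $\mathcal{X}_N$ is the Hermitian part — essentially $\mathcal{A}_N$ together with the symmetric contributions $h^2 M$ in the off-diagonal blocks — and $\mathcal{Y}_N$ collects the skew contribution $h(V - V^T)/2$ (in the $(1,3)$/$(3,1)$ blocks). One checks that the spectral norms of $\mathcal{X}_N$ and $\mathcal{Y}_N$ are uniformly bounded in $N$ (the entries of $V$ scale like the mesh size, and after the $h$-scaling $hV$ has bounded spectral norm, by a standard finite-element estimate), while $\|\mathcal{Y}_N\|_1 / N \to 0$ because $\mathcal{Y}_N = h\,\widetilde{\mathcal{Y}}_N$ with $\widetilde{\mathcal{Y}}_N$ of uniformly bounded spectral norm, so $\|\mathcal{Y}_N\|_1 \le h\cdot N\cdot\|\widetilde{\mathcal{Y}}_N\| = O(hN) = o(N)$. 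Since $\{\mathcal{X}_N\}_N$ and $\{\mathcal{Y}_N\}_N$ are GLT sequences and $\{\mathcal{X}_N\}_N$ is Hermitian with GLT symbol $\textbf{f}$ (the skew part being zero-distributed does not change the symbol, by \textbf{GLT4}), \textbf{GLT5} yields $\{\mathcal{S}_N\}_N\sim_\lambda(\textbf{f},\mathcal{I}_2)$, and by the even symmetry of $\textbf{f}$ this may be rewritten over $\mathcal{I}_2^+ = [0,\pi]^2$, which is the claim.

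The main obstacle I anticipate is the careful verification that the first-order term $hV$ is genuinely zero-distributed as a GLT sequence and that $\mathcal{Y}_N$ has trace-norm $o(N)$: this requires the finite-element scaling estimate $\|V\| = O(1)$ (so $\|hV\| = O(h)$) together with the GLT characterization of discretized advection terms from \cite[Section~7.4]{SerraLibro2}. Everything else is a bookkeeping application of \textbf{GLT2}, \textbf{GLT4}, and \textbf{GLT5} to the block structure already set up in Theorem~\ref{thm:spectral_magic}.
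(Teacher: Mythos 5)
You have a conceptual error in the third paragraph: the matrix $\mathcal{S}_N$ in~\eqref{eq:spectrally_analyzed_matrix_non_hermitian} \emph{is} symmetric, even though $Z = K + hV + h^2M$ is not. Indeed, the $(1,3)$ block of $\mathcal{S}_N$ is $Z^T$, the $(3,1)$ block is $Z$, and all remaining blocks are symmetric, so $\mathcal{S}_N^T = \mathcal{S}_N$ regardless of whether $V = V^T$. Your assertion that ``$\mathcal{S}_N$ is not Hermitian (indeed $V\neq V^T$)'' is therefore false, and the Hermitian/skew-Hermitian decomposition you propose is degenerate: the skew part of $\mathcal{S}_N$ is zero. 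Consequently, the ``delicate point'' you identify does not actually arise; once $\{\mathcal{S}_N\}_N \sim_{\rm GLT} \mathbf{f}$ is established via \textbf{GLT2}--\textbf{GLT4}, the eigenvalue distribution already follows from \textbf{GLT1} simply because each $\mathcal{S}_N$ is Hermitian.

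The paper also invokes \textbf{GLT5}, but with a different decomposition than yours: it writes $\mathcal{S}_N = \mathcal{A}_N + \mathcal{Y}_N$, where $\mathcal{A}_N$ is the Hermitian matrix from Theorem~\ref{thm:spectral_magic} and $\mathcal{Y}_N$ is the (itself symmetric) small-norm correction with $(1,3)$ block $hV^T + h^2M$ and $(3,1)$ block $hV + h^2M$, applying \textbf{GLT5} with $\mathcal{X}_N = \mathcal{A}_N$. This is a ``main part plus zero-distributed correction'' split, not a Hermitian/skew split, and it simultaneously justifies that the GLT symbol is unchanged and that the eigenvalue distribution holds. Your first two paragraphs — establishing $\{Z\}_{\bf n}\sim_{\rm GLT}\kappa$ via the $h$-scaling of $V$, the trace-norm estimate $O(hN)=o(N)$, and the appeal to \cite[Section~7.4]{SerraLibro2} for the discretized advection term — are correct and in line with what the paper needs; what requires fixing is the identification of $\mathcal{X}_N$, $\mathcal{Y}_N$ and the stated reason for invoking \textbf{GLT5}.
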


\begin{proof}
	Follows from Theorem~\ref{thm:spectral_magic}, the techniques adopted in its proof, and from \textbf{GLT5} applied to $\mathcal{S}_N = \mathcal{A}_N + \mathcal{Y}_N$, where
	\begin{equation*}
	\mathcal{Y}_N = \begin{bmatrix}
	O & O & hV^T + h^2M \\
	O & O & O \\
	hV + h^2M & O & O 
		\end{bmatrix}.
	\end{equation*}
\end{proof}

\section{An optimal preconditioning strategy}
\label{sec:efficientsolution}
In this section we analyze an effective procedure to precondition the GMRES method for the solution of the systems~\eqref{eq:spectrally_analyzed_matrix}, and~\eqref{eq:spectrally_analyzed_matrix_non_hermitian}. There exist indeed many preconditioners for the linear systems of saddle--point type exploiting their block structure, see, e.g, the review~\cite{benzi2005numerical} the comparisons in~\cite{MR3564863}, and, more specifically, the approaches described in~\cite{MR2785775,MR3187670,MR2759604,MR3787487}. What we present here belongs to this class, and is built with the objective of obtaining  {algorithmic scalability}, i.e.,
independence of the number of iteration from $h$, and optimality with respect to the parameter $\alpha$, i.e., independence {of} the number of iteration also with
respect to it. To achieve this kind of results the classical techniques can be broadly divided into three classes, the case of definite Hermitian preconditioners for which it is possible to retrieve a cluster of the eigenvalue sense from a cluster of the singular values~\cite{MR2176808,MR3187670,MR2785775}, that allows also for the use of the MINRES method; the case of the indefinite Hermitian preconditioners, and non Hermitian preconditioner~\cite{MR2759604,MR3787487}. We focus here on the last approach, while benefiting both from the spectral distribution of the sequence~$\{T_{\bf n}(m)\}_{\bf n}$ and $\{T_{\bf n}(\kappa)\}_{\bf n}$ of the Sections~\ref{Study_BN_perm}, \ref{sec:Study_BN_perm_non_herm}, and from the block form of the matrices~$\mathcal{A}_N$ and $\mathcal{S}_N$. Specifically, we propose the following preconditioner
\begin{equation}\label{eq:finally_permutation_for_the_preconditioner}
\mathcal{P}_N \begin{bmatrix}
{\mathbf{z}}_1\\
{\mathbf{z}}_2\\
{\mathbf{z}}_3\\
\end{bmatrix} = \begin{bmatrix}
O & \alpha K^T & O \\
O & \alpha M & -M \\
K & -M & O
\end{bmatrix}\begin{bmatrix}
{\mathbf{z}}_1\\
{\mathbf{z}}_2\\
{\mathbf{z}}_3\\
\end{bmatrix} = 
\begin{bmatrix}
{\mathbf{r}}_1\\
{\mathbf{r}}_2\\
{\mathbf{r}}_3\\
\end{bmatrix}.
\end{equation}
This is clearly an indefinite, and non Hermitian matrix, nevertheless, the linear systems involving it can be easily solved by the following back--substitution procedure:
\begin{enumerate}
	\item Solve $\alpha K^T{\mathbf{z}}_2=\mathbf{r}_1$;
	\item Solve $M {\mathbf{z}}_3 =\alpha M{\mathbf{z}}_2-{\mathbf{r}}_2$;
	\item Solve $K{\mathbf{z}}_1={\mathbf{r}}_3+M{\mathbf{z}}_2$.
\end{enumerate}
We stress that this does not require the approximation of any of the possible Schur complements of $\mathcal{A}_N$ ($\mathcal{S}_N$), thus greatly simplifying the construction of the preconditioner. Moreover, we are going to prove now that this choice provides a strong cluster at 1 for the eigenvalues of the preconditioned linear system while obtaining also the independence {of} $\alpha$. We obtain this result in two steps by means of the GLT theory showing that the matrix sequence $\{\mathcal{P}_N^{-1}\mathcal{A}_N\}_N$ is distributed in the sense of the eigenvalues as $\textbf{1}$. First, in Proposition~\ref{prop:from_preconditioned_to_eigenvalue_problem}, we show that the eigenvalues of the preconditioned matrix $\mathcal{P}_N {^{-1}}\mathcal{A}_N$ are either $1$, or the generalized eigenvalues of an auxiliary problem, then, in Lemma~\ref{lemma:disribuzione_eig_problem}, we prove that the matrix sequence associated to the latter is indeed distributed in the eigenvalue sense as the function $\textbf{1}$, thus obtaining that the eigenvalues of the preconditioned system are strictly clustered at $1$.
\begin{Proposition}\label{prop:from_preconditioned_to_eigenvalue_problem}
	Let $\mathcal{A}_N$ ($\mathcal{S}_N$) be the coefficient matrix in~\eqref{eq:spectrally_analyzed_matrix} (respectively in~\eqref{eq:spectrally_analyzed_matrix_non_hermitian}), and let $\mathcal{P}_N$ be the associated preconditioner from~\eqref{eq:finally_permutation_for_the_preconditioner}.
	Then, the eigenvalues of the preconditioned matrix $\mathcal{P}_N^{-1}\mathcal{A}_N$ are 
	\begin{itemize}
		\item $\lambda_j = 1$ for $j=1,\ldots,2N(\mathbf{n})$,
		\item $\lambda_j$ for $j=2N(\mathbf{n})+1,\ldots,N(3,\mathbf{n})$ given by the solution of the generalized eigenvalue problem \[\left(\frac{h^4}{\alpha}M+K^{T}M^{-1}K\right)\textbf{x}_1={\lambda}K^{T}M^{-1}K\textbf{x}_1,\]
		with $\textbf{x}_1\neq\textbf{0}\in  {\mathbb{R}^{N(\mathbf{n})}}.  $	
	\end{itemize}
\end{Proposition}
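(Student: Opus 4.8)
The plan is to analyze the generalized eigenvalue problem $\mathcal{A}_N \mathbf{x} = \lambda \mathcal{P}_N \mathbf{x}$ directly by exploiting the block-triangular-like structure of $\mathcal{P}_N$. First I would write the eigenvalue equation componentwise. Partitioning $\mathbf{x} = (\mathbf{x}_1,\mathbf{x}_2,\mathbf{x}_3)^T$ into three blocks of size $N(\mathbf{n})$ each, and using the explicit forms
\[
\mathcal{A}_N = \begin{bmatrix} h^4 M & O & K^T \\ O & \alpha M & -M \\ K & -M & O \end{bmatrix}, \qquad
\mathcal{P}_N = \begin{bmatrix} O & \alpha K^T & O \\ O & \alpha M & -M \\ K & -M & O \end{bmatrix},
\]
the system $\mathcal{A}_N\mathbf{x} = \lambda\mathcal{P}_N\mathbf{x}$ becomes the three block equations
\begin{align*}
h^4 M\mathbf{x}_1 + K^T\mathbf{x}_3 &= \lambda\,\alpha K^T\mathbf{x}_2,\\
\alpha M\mathbf{x}_2 - M\mathbf{x}_3 &= \lambda(\alpha M\mathbf{x}_2 - M\mathbf{x}_3),\\
K\mathbf{x}_1 - M\mathbf{x}_2 &= \lambda(K\mathbf{x}_1 - M\mathbf{x}_2).
\end{align*}
The second and third equations immediately give the dichotomy: either $\lambda = 1$, or both $\alpha M\mathbf{x}_2 - M\mathbf{x}_3 = 0$ and $K\mathbf{x}_1 - M\mathbf{x}_2 = 0$ must hold.

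Next I would count the multiplicity of the eigenvalue $1$. In the case $\lambda \ne 1$, the two constraints force $\mathbf{x}_2 = M^{-1}K\mathbf{x}_1$ (using that $M = T_{\mathbf n}(m)$ is invertible, $m$ being positive a.e.) and $\mathbf{x}_3 = \alpha\mathbf{x}_2 = \alpha M^{-1}K\mathbf{x}_1$. Substituting into the first block equation yields
\[
h^4 M\mathbf{x}_1 + \alpha K^T M^{-1}K\mathbf{x}_1 = \lambda\,\alpha K^T M^{-1}K\mathbf{x}_1,
\]
and dividing by $\alpha$ gives exactly the stated generalized eigenvalue problem
\[
\left(\tfrac{h^4}{\alpha}M + K^T M^{-1}K\right)\mathbf{x}_1 = \lambda\, K^T M^{-1}K\mathbf{x}_1,
\]
with $\mathbf{x}_1 \ne \mathbf{0}$ (if $\mathbf{x}_1 = \mathbf{0}$ then $\mathbf{x}_2 = \mathbf{x}_3 = \mathbf{0}$ and $\mathbf{x}$ is trivial). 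This accounts for $N(\mathbf{n})$ eigenvalues (counting the possibility that some of these equal $1$ as well, which only increases the multiplicity of $1$). The remaining $2N(\mathbf{n})$ eigenvalues must therefore be $1$; more concretely, the eigenspace for $\lambda=1$ contains all vectors with $\mathbf{x}_1$ chosen so that the first block equation is automatically consistent, and a dimension count via the rank-nullity structure of the map $\mathbf{x}\mapsto(\alpha M\mathbf{x}_2-M\mathbf{x}_3,\,K\mathbf{x}_1-M\mathbf{x}_2)$ shows the $\lambda=1$ eigenspace has dimension $2N(\mathbf{n})$.

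The main subtlety — not really an obstacle but the point requiring care — is the bookkeeping to ensure that exactly $2N(\mathbf{n})$ eigenvalues equal $1$ and the auxiliary problem contributes the remaining $N(\mathbf{n})$, since the preconditioned matrix $\mathcal{P}_N^{-1}\mathcal{A}_N$ is non-normal and one cannot argue purely by dimension of eigenspaces if $\mathcal{P}_N^{-1}\mathcal{A}_N$ is defective. The clean way is to observe that $\mathcal{P}_N^{-1}\mathcal{A}_N = I + \mathcal{P}_N^{-1}(\mathcal{A}_N - \mathcal{P}_N)$, where $\mathcal{A}_N - \mathcal{P}_N$ has the single nonzero block $(h^4 M - \alpha K^T)$ sitting in the $(1,2)$ position (for the Poisson case; analogously with $Z$ replacing $K$ in the advection--diffusion case), hence has rank at most $N(\mathbf{n})$. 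Therefore $\mathcal{P}_N^{-1}\mathcal{A}_N - I$ has rank at most $N(\mathbf{n})$, so $1$ is an eigenvalue of algebraic multiplicity at least $2N(\mathbf{n})$; the remaining $N(\mathbf{n})$ eigenvalues are precisely the roots of the characteristic polynomial restricted to the complementary invariant subspace, which are exactly the solutions of the reduced generalized eigenvalue problem derived above. This rank argument is robust to defectiveness and closes the count. The $\mathcal{S}_N$ case is identical after replacing $K$ by $Z = K + hV + h^2M$ throughout, noting $\mathcal{P}_N$ for that case is the one built from $K$ (or $Z$) accordingly, so no new ideas are needed.
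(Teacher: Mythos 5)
Your proposal follows essentially the same route as the paper: write the generalized eigenvalue problem $\mathcal{A}_N\mathbf{x}=\lambda\mathcal{P}_N\mathbf{x}$ block by block, read off the dichotomy from the second and third block rows (which $\mathcal{A}_N$ and $\mathcal{P}_N$ share), and eliminate $\mathbf{x}_2,\mathbf{x}_3$ to obtain the reduced pencil for $\lambda\neq1$. The only genuine variation is in the multiplicity bookkeeping for $\lambda=1$: the paper exhibits the $2N(\mathbf{n})$-dimensional eigenspace $\{\mathbf{x}:\mathbf{x}_3=\alpha\mathbf{x}_2-h^4K^{-T}M\mathbf{x}_1\}$ directly, whereas you invoke a rank bound on $\mathcal{P}_N^{-1}\mathcal{A}_N-I$, which is an equally clean alternative; note only that $\mathcal{A}_N-\mathcal{P}_N$ actually has three nonzero blocks $\left(h^4M,\ -\alpha K^T,\ K^T\right)$ filling the entire first block row rather than a single block in position $(1,2)$, but your rank-at-most-$N(\mathbf{n})$ conclusion still holds since only that first block row is nonzero.
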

\begin{proof}
	For each $n$,  $\lambda$ is an eigenvalue of the matrix 
	$\mathcal{P}^{-1}_N\mathcal{A}_N$ if $(\lambda,\bf{x})$ is an eigenpair of the eigenvalue  problem  \[\mathcal{A}_N{\bf x}=\lambda \mathcal{P}_N {\bf x},\]
	with\begin{equation*}
	{\bf x}=\begin{bmatrix}
	\textbf{x}_1\\
	\textbf{x}_2\\
	\textbf{x}_3
	\end{bmatrix}\neq{\bf 0}\in  {\mathbb{R}^{N(3,\mathbf{n})}}.
	\end{equation*}
	That is $(\lambda,\bf{x})$ is solution of
	\begin{equation*}
	\begin{bmatrix}
	h^4M & O & K^T \\
	O & \alpha M & -M \\
	K & -M & O
	\end{bmatrix}\begin{bmatrix}
	\textbf{x}_1\\
	\textbf{x}_2\\
	\textbf{x}_3
	\end{bmatrix}=\lambda \begin{bmatrix}
	O & \alpha K^T & O \\
	O & \alpha M & -M \\
	K & -M & O
	\end{bmatrix} \begin{bmatrix}
	\textbf{x}_1\\
	\textbf{x}_2\\
	\textbf{x}_3
	\end{bmatrix}.
	\end{equation*}
	It is clear from the second and the third ``block'' equations that $(1,{\bf x})$ is an eigenpair for the latter problem for all  {the vectors in the $N(2,\mathbf{n})$ subspace of $\mathbb{R}^{N(3,\mathbf{n})}$} 
	\[\left\lbrace{\bf x}=\begin{bmatrix}
	\textbf{x}_1\\
	\textbf{x}_2\\
	\textbf{x}_3
	\end{bmatrix} {\rm s.t. }\;\; \textbf{x}_3= \alpha\textbf{x}_2-h^4K^{-T}M\textbf{x}_1, \quad \forall\,\textbf{x}_1,\textbf{x}_2\in  {\mathbb{R}^{N(\mathbf{n})}}\right\rbrace.\]
	Otherwise, if $\lambda\neq 1$, from the third ``block'' equation
	\[(1-\lambda)K \textbf{x}_1=(1-\lambda)M \textbf{x}_2,\]
	follows
	\[\textbf{x}_2=M^{-1}K\textbf{x}_1.\]
	And thus, by substitution, we easily find
	\[\textbf{x}_3=\alpha M^{-1}K\textbf{x}_1\]
	and thus the remaining eigenpairs are given by the solution of
	\[\left(\frac{h^4}{\alpha}M+K^{T}M^{-1}K\right)\textbf{x}_1={\lambda}K^{T}M^{-1}K\textbf{x}_1.\]
\end{proof}
\begin{Lemma}\label{lemma:disribuzione_eig_problem}
	The matrix sequence  \[\Biggl\{\left(K^TM^{-1}K\right)^{-1}\left(\frac{h^4}{\alpha}M+K^T{M}^{-1}K\right)\Biggr\}_{\bf n},\]  associated to the generalized eigenvalue problem
	\[\left(\frac{h^4}{\alpha}M+K^TM^{-1}K\right)\textbf{x}_1=\lambda K^TM^{-1}K\textbf{x}_1,  \]	
	is distributed in the eigenvalue sense as ${\bf 1}$ over  $\mathcal{I}_2^+$.
\end{Lemma}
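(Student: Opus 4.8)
The plan is to recognize the matrix as a preconditioned sequence and to apply Theorem~\ref{th:esercizio_libro}. Writing
\[
\left(K^TM^{-1}K\right)^{-1}\left(\tfrac{h^4}{\alpha}M+K^TM^{-1}K\right)=I+\left(K^TM^{-1}K\right)^{-1}\tfrac{h^4}{\alpha}M,
\]
I would take as ``preconditioner'' $P_N:=K^TM^{-1}K$ and as ``numerator'' $A_N:=\tfrac{h^4}{\alpha}M+K^TM^{-1}K$, and show that $\{A_N\}_{\mathbf n}$ and $\{P_N\}_{\mathbf n}$ carry the \emph{same} GLT symbol, so that the quotient symbol is the constant $\mathbf 1$. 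It is worth stressing already here that, although the perturbing term $\tfrac{h^4}{\alpha}M$ is annihilated \emph{distributionally} by the vanishing factor $h^4=(n+1)^{-4}$, it is \emph{not} small in spectral norm, because $\|(K^TM^{-1}K)^{-1}\|$ grows like $h^{-4}$; hence the eigenvalues approach $1$ only asymptotically, in the distributional sense, which is precisely what the statement asserts.

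First I would record the GLT symbols of the ingredients. From $M=T_{\mathbf n}(m)$ and $K=T_{\mathbf n}(\kappa)$ (recalled before Theorem~\ref{thm:spectral_magic}) together with \textbf{GLT3}, one has $\{M\}_{\mathbf n}\sim_{\rm GLT}m$ and $\{K\}_{\mathbf n}\sim_{\rm GLT}\kappa$ with $m,\kappa$ as in~\eqref{eq:m-bi-toeplitz}--\eqref{eq:k-bi-toeplitz}. Since $m>0$ on all of $\mathcal{I}_2$, $M$ is symmetric positive definite and $\{M^{-1}\}_{\mathbf n}\sim_{\rm GLT}1/m$ by \textbf{GLT2}; as $K$ is the (symmetric positive definite) Dirichlet stiffness matrix, $K^TM^{-1}K$ is symmetric positive definite for every $\mathbf n$, and by \textbf{GLT2} again $\{K^TM^{-1}K\}_{\mathbf n}\sim_{\rm GLT}\kappa^2/m$; moreover $\kappa(\theta_1,\theta_2)=4\sin^2(\theta_1/2)+4\sin^2(\theta_2/2)$ vanishes only at the origin, so $\kappa^2/m\neq0$ a.e. On the other hand $\{\tfrac{h^4}{\alpha}M\}_{\mathbf n}$ is Hermitian and zero-distributed, because $\|M\|_1\le n^2\|m\|_\infty$ and $h\to0$ give
\[
\frac{1}{n^2}\Bigl\|\tfrac{h^4}{\alpha}M\Bigr\|_1\le\frac{h^4}{\alpha}\,\|m\|_\infty\longrightarrow 0,
\]
so $\{\tfrac{h^4}{\alpha}M\}_{\mathbf n}\sim_{\rm GLT}0$ by \textbf{GLT4}; hence, by closure under sums in \textbf{GLT2}, $\{A_N\}_{\mathbf n}$ is a sequence of symmetric positive definite matrices with $\{A_N\}_{\mathbf n}\sim_{\rm GLT}\kappa^2/m$.

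It then remains to invoke Theorem~\ref{th:esercizio_libro} with $P_N=K^TM^{-1}K$ (Hermitian positive definite, $\{P_N\}_{\mathbf n}\sim_{\rm GLT}\xi:=\kappa^2/m\neq0$ a.e.) and $A_N=\tfrac{h^4}{\alpha}M+K^TM^{-1}K$ (Hermitian, $\{A_N\}_{\mathbf n}\sim_{\rm GLT}\kappa^2/m$): it yields $\{P_N^{-1}A_N\}_{\mathbf n}\sim_{\rm GLT}\xi^{-1}(\kappa^2/m)=\mathbf 1$ and, in particular, $\{P_N^{-1}A_N\}_{\mathbf n}\sim_{\sigma,\lambda}(\mathbf 1,\mathcal{I}_2)$; since the symbol is constant the limit in Definition~\ref{def-distribution} is independent of the domain, so $\{P_N^{-1}A_N\}_{\mathbf n}\sim_\lambda(\mathbf 1,\mathcal{I}_2^+)$, which is the claim. (Equivalently, the same theorem applied to $A_N=\tfrac{h^4}{\alpha}M$ gives $\{(K^TM^{-1}K)^{-1}\tfrac{h^4}{\alpha}M\}_{\mathbf n}\sim_{\sigma,\lambda}(\mathbf 0,\mathcal{I}_2)$, and one concludes by observing that the eigenvalues of $P_N^{-1}A_N$ are $1$ plus those of $(K^TM^{-1}K)^{-1}\tfrac{h^4}{\alpha}M$, or by applying \textbf{GLT5} with $\mathcal{X}_N=I$ and $\mathcal{Y}_N=(K^TM^{-1}K)^{-1}\tfrac{h^4}{\alpha}M$, whose trace-norm divided by $n^2$ vanishes since $\{\mathcal{Y}_N\}_{\mathbf n}\sim_\sigma\mathbf 0$.) The only genuinely delicate point I foresee is justifying the applicability of Theorem~\ref{th:esercizio_libro} in spite of $K^TM^{-1}K$ and its symbol $\kappa^2/m$ being singular at $(0,0)$: one must check that $K^TM^{-1}K$ is invertible --- indeed positive definite --- for each finite $\mathbf n$ (clear, since $K$ and $M$ are SPD) and that the singular set of $\xi$ is Lebesgue-null, so that the inversion and product rules of \textbf{GLT2} still hold; the trace-norm estimate above and the non-Hermiticity of $P_N^{-1}A_N$ --- which is why the eigenvalue distribution is extracted from Theorem~\ref{th:esercizio_libro} (or \textbf{GLT5}) rather than directly from \textbf{GLT1} --- are the remaining, routine matters.
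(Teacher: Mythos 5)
Your proof is correct and follows essentially the same route as the paper: establish the GLT symbols $m$, $\kappa$, $1/m$, $\kappa^2/m$ via \textbf{GLT2}--\textbf{GLT3}, show $\{\tfrac{h^4}{\alpha}M\}_{\mathbf n}\sim_{\rm GLT}0$ via \textbf{GLT4}, and then invoke Theorem~\ref{th:esercizio_libro} with $P_N=K^TM^{-1}K$ and $A_N=\tfrac{h^4}{\alpha}M+K^TM^{-1}K$ to conclude $\{P_N^{-1}A_N\}_{\mathbf n}\sim_\lambda(\mathbf 1,\mathcal{I}_2^+)$. Your additional remarks on the trace-norm bound, the positivity of $K^TM^{-1}K$, and the a.e.\ non-vanishing of $\kappa^2/m$ at $(0,0)$ simply spell out details the paper leaves implicit, and your observation that the perturbation is small only distributionally --- not in spectral norm --- is a useful clarification already echoed by Remark~\ref{remark:asymptotic_regime_of_generalized_eigenvalues}.
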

\begin{proof}
	The statement is equivalent to \[\Biggl\{(T_{\bf n}(\kappa)T^{-1}_{\bf n}(m)T_{\bf n}(\kappa))^{-1}\left(\frac{h^4}{\alpha}T_{\bf n}(m)+T_{\bf n}(\kappa)T_{\bf n}^{-1}(m)T_{\bf n}(\kappa)\right) \Biggr\}_{\bf n}\sim_{\lambda}({\bf 1},\mathcal{I}_2^+),\]  since, from  {\eqref{eq:m-bi-toeplitz} and~\eqref{eq:k-bi-toeplitz}}, we have that $M$ and $K$ are the symmetric and positive definite matrices $T_{\bf n}(m)$ and $T_{\bf n}(\kappa)$, respectively. 
	
	Moreover the sequence $\biggl\{\frac{h^4}{\alpha}  T_{\bf n}(m)\biggr\}_{\bf n}$ is 
	distribuited in the singular value sense as $0$ over  $\mathcal{I}_2^+$. Hence from property {\bf GLT4} plus properties {\bf GLT2-GLT3} we have that the following GLT results hold:
	\begin{equation*}
	\biggl\{\frac{h^4}{\alpha}  T_{\bf n}(m)\biggr\}_{\bf n}\sim_{GLT}{\bf 0}, 
	\end{equation*}
	and
	\begin{equation*}
	\begin{split}
	&\{T_{\bf n}(m)\}_{\bf n}\sim_{GLT}m, \quad
	\{T_{\bf n}(\kappa)\}_{\bf n}\sim_{GLT}\kappa,\\
	&\{T_{\bf n}^{-1}(m)\}_{\bf n}\sim_{GLT}\frac{1}{m}, \quad \{T_{\bf n}^{-1}(\kappa)\}_{\bf n}\sim_{GLT}\frac{1}{\kappa}.
	\end{split}
	\end{equation*}
	Exploiting again {\bf GLT 2--GLT4} we obtain that 
	\begin{equation*}
	\{T_{\bf n}(\kappa)T_{\bf n}^{-1}(m)T_{\bf n}(\kappa)\}_{\bf n}\sim_{GLT}\frac{m}{\kappa^2}
	\end{equation*}
	and
	\begin{equation*}
	\biggl\{\frac{h^4}{\alpha} T_{\bf n}(m)+T_{\bf n}(\kappa)T_{\bf n}^{-1}(m)T_{\bf n}(\kappa)\biggr\}_{\bf n}\sim_{GLT}\frac{\kappa^2}{m}.
	\end{equation*}
	Since the matrix $T_{\bf n}(\kappa)T_{\bf n}^{-1}(m)T_{\bf n}(\kappa)$ is positive definite, then Theorem~\ref{th:esercizio_libro} implies 
	\[\biggl\{(T_{\bf n}(\kappa)T^{-1}_{\bf n}(m)T_{\bf n}(\kappa))^{-1}\left(\frac{h^4}{\alpha} T_{\bf n}(m)+T_{\bf n}(\kappa)T_{\bf n}^{-1}(m)T_{\bf n}(\kappa)\right)\biggr\}_{\bf n}\sim_{GLT, \sigma,\lambda}({\bf 1},\mathcal{I}_2^+)\]
	and, hence, the thesis.
\end{proof}

{
	\begin{Remark}\label{remark:asymptotic_regime_of_generalized_eigenvalues}
		Let us stress that the conclusion in Lemma~\ref{lemma:disribuzione_eig_problem} is again an asymptotic result for $h \rightarrow 0$ that is then valid for a fixed value of the parameter $\alpha$. Furthermore, it permits also an answer to \emph{Q3} characterizing the condition number of the preconditioned matrix sequence. Specifically, if we let $X$ be the matrix of the generalized eigenvectors for the pencil $(K,M)$, i.e., if $X$ is an invertible matrix such that 
		\begin{equation*}
		KX = MXD, \text{ with } \begin{array}{l} X^T K X = \diag(d^{(K)}_\mathbf{1},\ldots,d^{(K)}_\mathbf{n}) \equiv D^{(K)}, \\ X^T M X = \diag(d^{(M)}_\mathbf{1},\ldots,d^{(M)}_\mathbf{n}) \equiv D^{(M)}, \\ D = \diag\left(\frac{d^{(K)}_\mathbf{1}}{d^{(M)}_\mathbf{1}},\ldots,\frac{d^{(K)}_\mathbf{n}}{d^{(M)}_\mathbf{n}}\right),
		\end{array}
		\end{equation*}
		then we find 
		\begin{equation*}
		\begin{split}
		\left(K^TM^{-1}K\right)^{-1} & \left(\frac{h^4}{\alpha}M+K^T{M}^{-1}K\right) =   \left( X^{-T} D^{(K)} X^{-1} X {D^{(M)}}^{-1} X^{T} X^{-T} D^{(K)} X^{-1} \right)^{-1}\\& 
		\left( \frac{h^4}{\alpha} X^{-T} D^{(M)} X^{-1} + X^{-T} D^{(K)} X^{-1} X {D^{(M)}}^{-1} X^{T} X^{-T} D^{(K)} X^{-1} \right) \\
		= &\; X D^{(M)} (D^{(K)})^{-2} \left( \frac{h^4}{\alpha} D^{(M)} + (D^{(K)})^2 (D^{(M)})^{-1} \right) X^{-1}.
		\end{split}
		\end{equation*}
		It is then straightforward to use~\eqref{eq:m-bi-toeplitz} and~\eqref{eq:k-bi-toeplitz} to estimate the maximum eigenvalues of the generalized eigenvalue problem in Proposition~\ref{prop:from_preconditioned_to_eigenvalue_problem} as an $O(\alpha^{-1})$. This means that the asymptotic regime described in Lemma~\ref{lemma:disribuzione_eig_problem} is evident whenever $h^4$ becomes smaller than the fixed value of $\alpha$ of the given problem.  
	\end{Remark}
}

{
	We can now answer to question \emph{Q3} for both the matrix sequences $\{\mathcal{P}_N^{-1}\mathcal{A}_N\}_N $, and $\{\mathcal{P}_N^{-1}S_N\}_N$ of the Subsection \ref{sec:Study_BN_perm_non_herm}, where in the definition of the preconditioner~\eqref{eq:finally_permutation_for_the_preconditioner} $Z$ plays the same role of $K$.}

\begin{Theorem}\label{th:collect}
	The matrix sequences $\{\mathcal{P}_N^{-1}\mathcal{A}_N\}_N \sim_\lambda (\mathbf{1},\mathcal{I}_2^+)$, $\{\mathcal{P}_N^{-1}\mathcal{S}_N\}_N \sim_\lambda (\mathbf{1},\mathcal{I}_2^+)$ independently {of} $\alpha$.
\end{Theorem}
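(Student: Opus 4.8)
The plan is to obtain the statement by feeding the exact spectral description of the preconditioned matrices (Proposition~\ref{prop:from_preconditioned_to_eigenvalue_problem}) into the distributional result for their non-trivial part (Lemma~\ref{lemma:disribuzione_eig_problem}, and its obvious analogue for $\mathcal{S}_N$). First I would treat $\{\mathcal{P}_N^{-1}\mathcal{A}_N\}_N$. Fix $F\in\mathcal{C}_0(\mathbb{C})$; the matrix $\mathcal{P}_N^{-1}\mathcal{A}_N$ has order $N(3,\mathbf{n})=3N(\mathbf{n})$, and by Proposition~\ref{prop:from_preconditioned_to_eigenvalue_problem} its eigenvalues, counted with multiplicity, are $2N(\mathbf{n})$ copies of $1$ together with the $N(\mathbf{n})$ real eigenvalues $\mu_1,\ldots,\mu_{N(\mathbf{n})}$ of the matrix $G_{\mathbf{n}}:=\bigl(K^TM^{-1}K\bigr)^{-1}\bigl(\tfrac{h^4}{\alpha}M+K^TM^{-1}K\bigr)$ of Lemma~\ref{lemma:disribuzione_eig_problem}. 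Hence
\begin{equation*}
\frac{1}{3N(\mathbf{n})}\sum_{j=1}^{3N(\mathbf{n})}F\bigl(\lambda_j(\mathcal{P}_N^{-1}\mathcal{A}_N)\bigr)=\frac{2}{3}\,F(1)+\frac{1}{3}\cdot\frac{1}{N(\mathbf{n})}\sum_{j=1}^{N(\mathbf{n})}F(\mu_j).
\end{equation*}
Since Lemma~\ref{lemma:disribuzione_eig_problem} gives $\{G_{\mathbf{n}}\}_{\mathbf{n}}\sim_\lambda(\mathbf{1},\mathcal{I}_2^+)$ and the symbol there is the constant scalar $1$, the last average tends to $F(1)$ as $\mathbf{n}\to\infty$, so the right-hand side converges to $\tfrac23 F(1)+\tfrac13 F(1)=F(1)$, which is exactly $\{\mathcal{P}_N^{-1}\mathcal{A}_N\}_N\sim_\lambda(\mathbf{1},\mathcal{I}_2^+)$. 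Independence of $\alpha$ comes for free: the only place $\alpha$ enters Lemma~\ref{lemma:disribuzione_eig_problem} is through $\{\tfrac{h^4}{\alpha}T_{\mathbf{n}}(m)\}_{\mathbf{n}}$, which is zero-distributed, hence $\sim_{\rm GLT}\mathbf{0}$ by \textbf{GLT4}, for every fixed $\alpha>0$.

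For $\{\mathcal{P}_N^{-1}\mathcal{S}_N\}_N$ I would rerun the same two steps with $Z=K+hV+h^2M$ in place of $K$. Proposition~\ref{prop:from_preconditioned_to_eigenvalue_problem} applies (it is already stated for $\mathcal{S}_N$, with $Z$ playing the role of $K$ in the preconditioner), so the spectrum splits into $2N(\mathbf{n})$ copies of $1$ and the $N(\mathbf{n})$ eigenvalues of $\bigl(Z^TM^{-1}Z\bigr)^{-1}\bigl(\tfrac{h^4}{\alpha}M+Z^TM^{-1}Z\bigr)$, and it remains only to prove the analogue of Lemma~\ref{lemma:disribuzione_eig_problem} for $Z$. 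For that I would note that $\{hV\}_{\mathbf{n}}$ and $\{h^2M\}_{\mathbf{n}}$ are zero-distributed (the same trace-norm estimate used in the proof of Proposition~\ref{pro:non-hermitian-case}), so \textbf{GLT2}--\textbf{GLT4} give $\{Z\}_{\mathbf{n}}\sim_{\rm GLT}\kappa$ and $\{Z^TM^{-1}Z\}_{\mathbf{n}}\sim_{\rm GLT}\kappa^2/m$; since $Z$ is invertible the matrix $Z^TM^{-1}Z$ is symmetric positive definite, and since $\kappa^2/m$ vanishes only at the origin, Theorem~\ref{th:esercizio_libro} yields $\bigl\{(Z^TM^{-1}Z)^{-1}(\tfrac{h^4}{\alpha}M+Z^TM^{-1}Z)\bigr\}_{\mathbf{n}}\sim_{\rm GLT,\sigma,\lambda}(\mathbf{1},\mathcal{I}_2^+)$. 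The same averaging/splitting as above then gives $\{\mathcal{P}_N^{-1}\mathcal{S}_N\}_N\sim_\lambda(\mathbf{1},\mathcal{I}_2^+)$, again independently of $\alpha$.

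Since the two substantive ingredients are already established, the only point requiring care is the bookkeeping in the splitting: one must check that gluing a block of $2N(\mathbf{n})$ eigenvalues all equal to $1$ to a sequence of $N(\mathbf{n})$ eigenvalues distributed as $\mathbf{1}$ produces an order-$3N(\mathbf{n})$ sequence still distributed as $\mathbf{1}$. This is the elementary union-of-distributed-sequences fact, and it follows directly from Definition~\ref{def-distribution} exactly as in the displayed computation; it also silently absorbs the $o(N)$ outliers implicit in Lemma~\ref{lemma:disribuzione_eig_problem}, so I do not expect any genuine obstacle beyond this routine verification.
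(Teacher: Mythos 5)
Your proposal is correct and takes essentially the same route as the paper: Theorem~\ref{th:collect} is presented there without a standalone proof, as a direct consequence of Proposition~\ref{prop:from_preconditioned_to_eigenvalue_problem} combined with Lemma~\ref{lemma:disribuzione_eig_problem}, and your averaging/splitting computation plus the $Z$-analogue of the lemma (via zero-distributed corrections $hV$, $h^2M$ and Theorem~\ref{th:esercizio_libro}) is exactly the unstated glue the authors are relying on.
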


Moreover, an analogous spectral result to Theorem \ref{th:collect} can be given for the sequence $\{\mathcal{P}_{\text{BCT}}^{-1}\mathcal{A}_N\}_{N}$ (respectively, $\{\mathcal{P}_{\text{BCT}}^{-1}\mathcal{S}_N\}_{N}$), for
\begin{equation*}
\mathcal{P}_{\text{BCT}} = \begin{bmatrix}
O & O &  {K}^T \\
O & \alpha  {M}  & - {M} \\
{K} & - {M} & O \\
\end{bmatrix}.
\end{equation*}
\begin{Theorem}\label{thm:clusteringasymptotic}
	The matrix sequences $\{\mathcal{P}_{\text{BCT}}^{-1}\mathcal{A}_N\}_N \sim_\lambda (\mathbf{1},\mathcal{I}_2^+)$, $\{\mathcal{P}_{\text{BCT}}^{-1}\mathcal{S}_N\}_N \sim_\lambda (\mathbf{1},\mathcal{I}_2^+)$ independently {of}~$\alpha$.
\end{Theorem}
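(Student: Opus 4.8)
The plan is to mirror the argument that establishes Theorem~\ref{th:collect} but with $\mathcal{P}_{\text{BCT}}$ in place of $\mathcal{P}_N$. First I would carry out the analogue of Proposition~\ref{prop:from_preconditioned_to_eigenvalue_problem}: write the eigenvalue equation $\mathcal{A}_N\mathbf{x} = \lambda\,\mathcal{P}_{\text{BCT}}\mathbf{x}$ in block form with $\mathbf{x} = (\mathbf{x}_1,\mathbf{x}_2,\mathbf{x}_3)^T$. The only difference between $\mathcal{P}_{\text{BCT}}$ and $\mathcal{A}_N$ is the $(1,1)$ block (zero versus $h^4 M$) and, in the non-Hermitian case for $\mathcal{S}_N$, the fact that $Z$ replaces $K$ throughout. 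The second and third block rows of $\mathcal{A}_N$ and $\mathcal{P}_{\text{BCT}}$ coincide, so for $\lambda = 1$ one again gets an invariant subspace of dimension $2N(\mathbf{n})$ determined by the constraint coming from the first block row. For $\lambda \neq 1$, the third block equation $(1-\lambda)(K\mathbf{x}_1 - M\mathbf{x}_2) = 0$ (respectively with $Z$) forces $\mathbf{x}_2 = M^{-1}K\mathbf{x}_1$, and then the first block row $h^4 M\mathbf{x}_1 + K^T\mathbf{x}_3 = 0$ combined with the structure yields exactly the same auxiliary generalized eigenvalue problem $\left(\frac{h^4}{\alpha}M + K^T M^{-1}K\right)\mathbf{x}_1 = \lambda\, K^T M^{-1}K\,\mathbf{x}_1$ as in Proposition~\ref{prop:from_preconditioned_to_eigenvalue_problem}. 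Here I expect the details to go through essentially verbatim, since $\mathcal{P}_{\text{BCT}}$ differs from $\mathcal{P}_N$ only in the $(1,1)$ block, and that block is zero in both.

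Second, I would invoke Lemma~\ref{lemma:disribuzione_eig_problem} directly: it already states that the matrix sequence associated to exactly this auxiliary pencil is distributed as $\mathbf{1}$ over $\mathcal{I}_2^+$, with the distribution being independent of $\alpha$ (the $\alpha$-dependence sits entirely in the zero-distributed term $\tfrac{h^4}{\alpha}T_{\bf n}(m)$, which vanishes in the GLT sense regardless of the fixed value of $\alpha$). Combining the $2N(\mathbf{n})$ eigenvalues equal to $1$ with the remaining $N(\mathbf{n})$ eigenvalues that cluster at $1$ — a set of relative size $1/3$ that does not disturb the distribution — I conclude $\{\mathcal{P}_{\text{BCT}}^{-1}\mathcal{A}_N\}_N \sim_\lambda (\mathbf{1},\mathcal{I}_2^+)$. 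For the non-Hermitian case $\{\mathcal{P}_{\text{BCT}}^{-1}\mathcal{S}_N\}_N$, I would use, as in the proof of Proposition~\ref{pro:non-hermitian-case} and Theorem~\ref{th:collect}, that $Z = K + hV + h^2 M$ differs from $K$ by a term whose contribution is, in the relevant GLT sense, a lower-order perturbation (appealing to \textbf{GLT5} and the analysis of \cite[Section~7.4]{SerraLibro2}); the same auxiliary pencil analysis then applies with $Z$ in place of $K$, and Lemma~\ref{lemma:disribuzione_eig_problem} closes the argument because $\{T_{\bf n}(\kappa)\}$ and the perturbed sequence share the same symbol $\kappa$.

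The one point requiring a little care — and the step I expect to be the main (minor) obstacle — is verifying that the block-elimination in the analogue of Proposition~\ref{prop:from_preconditioned_to_eigenvalue_problem} still produces precisely the same $2N(\mathbf{n})$-dimensional eigenspace at $\lambda = 1$ when the $(1,1)$ block of $\mathcal{P}_{\text{BCT}}$ is $O$ rather than involving $K^T$ in the $(1,2)$ position as in $\mathcal{P}_N$. In $\mathcal{P}_N$ the $(1,2)$ block is $\alpha K^T$, whereas in $\mathcal{P}_{\text{BCT}}$ the first block row of the preconditioner is $[\,O \;\; O \;\; K^T\,]$. So the $\lambda=1$ equation from the first block row reads $h^4 M\mathbf{x}_1 + K^T\mathbf{x}_3 = K^T\mathbf{x}_3$, i.e. $h^4 M\mathbf{x}_1 = 0$, hence $\mathbf{x}_1 = 0$; together with the second and third rows (which give $\alpha M\mathbf{x}_2 - M\mathbf{x}_3 = \alpha M\mathbf{x}_2 - M\mathbf{x}_3$ and $K\mathbf{x}_1 - M\mathbf{x}_2 = K\mathbf{x}_1 - M\mathbf{x}_2$, both automatically satisfied) this still yields an invariant subspace of dimension $2N(\mathbf{n})$, now parametrised by $(\,0,\mathbf{x}_2,\mathbf{x}_3\,)$ with $\mathbf{x}_2,\mathbf{x}_3 \in \mathbb{R}^{N(\mathbf{n})}$ free. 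For $\lambda \neq 1$ one again obtains $\mathbf{x}_1 \neq 0$ is impossible unless $h^4 M\mathbf{x}_1 = \lambda K^T\mathbf{x}_3 - K^T\mathbf{x}_3 = (\lambda-1)K^T\mathbf{x}_3$, and substituting $\mathbf{x}_2 = M^{-1}K\mathbf{x}_1$ and $\mathbf{x}_3$ from the second row recovers the identical auxiliary pencil. Once this bookkeeping is confirmed, the remainder is a direct citation of Lemma~\ref{lemma:disribuzione_eig_problem} and properties \textbf{GLT1}--\textbf{GLT5}, exactly as in the proof of Theorem~\ref{th:collect}.
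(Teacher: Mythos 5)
Your proposal is correct and takes essentially the same route as the paper, which simply states that the proof follows that of Proposition~\ref{prop:from_preconditioned_to_eigenvalue_problem} and Lemma~\ref{lemma:disribuzione_eig_problem} with $\mathcal{P}_N$ replaced by $\mathcal{P}_{\text{BCT}}$; you have carried out that replacement explicitly and correctly identified that the $\lambda=1$ invariant subspace is now $\{(\mathbf{0},\mathbf{x}_2,\mathbf{x}_3)\}$ while the auxiliary pencil is unchanged. One small slip: in your second paragraph the first block equation should read $h^4 M\mathbf{x}_1 + K^T\mathbf{x}_3 = \lambda K^T\mathbf{x}_3$ rather than $h^4 M\mathbf{x}_1 + K^T\mathbf{x}_3 = 0$, which you do state correctly in your final paragraph.
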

\begin{proof}
	The proof follows the proofs of the Proposition \ref{prop:from_preconditioned_to_eigenvalue_problem} and Lemma \ref{lemma:disribuzione_eig_problem}, replacing the expression of $\mathcal{P}_N$ with that of $\mathcal{P}_{\text{BCT}}$.
\end{proof}
This is indeed an example of a block--counter--triangular preconditioner in the style of~\cite{MR2785775}. 

\begin{Remark}\label{rmk:alternative_preconditioners}
{The preconditioner proposed in~\cite{MR2785775} takes the lower anti--triangular part of a different permutation of the system matrix $\mathcal{A}_N$, and considers also a different scaling. By this approach, the term that is dropped out in the preconditioner is not a correction of ``small'' norm, and this makes a substantial difference in the performances of the two approaches. Specifically, }comparing the results of Proposition~\ref{prop:from_preconditioned_to_eigenvalue_problem}, with~\cite[Theorem~3.1]{MR2785775}, it is straightforward to observe that in the latter case it is not possible to infer a cluster of the eigenvalues of the preconditioned system, specifically, for the rearranged system
\begin{equation*}
\tilde{\mathcal{P}}_{\text{BCT}}^{-1} \tilde{\mathcal{A}}_N = \begin{bmatrix}
O & O & -M \\
O & h^4 M & K^T \\
-M & K & O
\end{bmatrix}^{-1} \begin{bmatrix}
\alpha M & O & -M \\
O & h^4 M & K^T \\
-M & K & O
\end{bmatrix}.
\end{equation*}
The non-unit eigenvalues are the one of the matrix sequence $\{I + \alpha h^{-4} M^{-1} K M^{-1} K^T\}_N$, for which the clustering at one cannot be concluded. Similar observation can be made also for the null--space based block anti--triangular preconditioners~\cite{MR3187670} arising from the block anti--triangular factorization of the saddle--point matrix. {Furthermore, one could consider the preconditioner which neglects the (3,2) block of $\mathcal{\bar{A}}_N$, avoiding the reordering and the scaling. This would bring to the case where the non-unit eigenvalues are the solution of the following generalized eigenvalue problem
 \begin{equation*}
\begin{bmatrix}
h^2 M & O & K^T \\
O & \alpha h^2 M & -h^2 M \\
K & - h^2 M & O
\end{bmatrix} \begin{bmatrix}
\mathbf{x}_1 \\ \mathbf{x}_2 \\ \mathbf{x}_3
\end{bmatrix} = \lambda \begin{bmatrix}
h^2 M & O & K^T \\
O & \alpha h^2 M & -h^2 M \\
K & O & O
\end{bmatrix} \begin{bmatrix}
\mathbf{x}_1 \\ \mathbf{x}_2 \\ \mathbf{x}_3
\end{bmatrix},
\end{equation*}
and, then, we have a behavior analogous to the case with preconditioner $\tilde{\mathcal{P}}_{\text{BCT}}$ (i.e., the absence of a provable cluster of the preconditioned sequence). Precisely, the non-unit eigenvalues are of the form $\lambda_i=1+\mu_i$, where, $\mu_i$ are the reciprocal of the  eigenvalues of the matrix sequence $\{\frac{\alpha}{h^4}  M^{-1} K M^{-1} K^T\}_N$.  }
\end{Remark}

\subsection{Approximate iterative solution of the auxiliary linear systems}
\label{sec:approximate_solution}

The application of the proposed preconditioners requires the solution of auxiliary linear systems with the matrices $K$ ,$K^T$, and $M$ or, respectively, $Z$, $Z^T$, and $M$  { obtained from~\eqref{eq:finally_permutation_for_the_preconditioner}}. In both cases we are dealing with very common linear systems for which there exist highly efficient and specific solvers, e.g., fast Poisson solvers, multigrid methods of geometric, and algebraic type, inner--outer Krylov solver with incomplete factorization preconditioner, and several combinations of all the previous. Potentially, any optimal preconditioner for these matrices could be included in the present framework without spoiling the overall construction, the actual choice is indeed a matter of computational framework; { see, e.g.,~\cite[Chapter~3.8]{MR3793630}. For the solution of the systems involving the mass matrix $M$ a straightforward solution is using the unpreconditioned CG method or its preconditioned version. In the latter case, we use either a modified incomplete Cholesky factorization with drop–tolerance \texttt{1e-2} or
a standard algebraic multigrid. We stress that the solution of the system involving the stiffness matrix
can be machine-dependent; see, e.g., Figure~\ref{fig:innersolver}.  We easily observe that the fastest solution
with the required accuracy for the system involving the $K = T_\mathbf{n}(k)$ is obtained by using the PCG with a standard AMG
preconditioner. On the other hand, for the non symmetric case we can use the BiCGstab method together with a modified incomplete LU factorization of Crout type.}
\begin{figure}[htbp]
	\centering
	\includegraphics[width=\columnwidth]{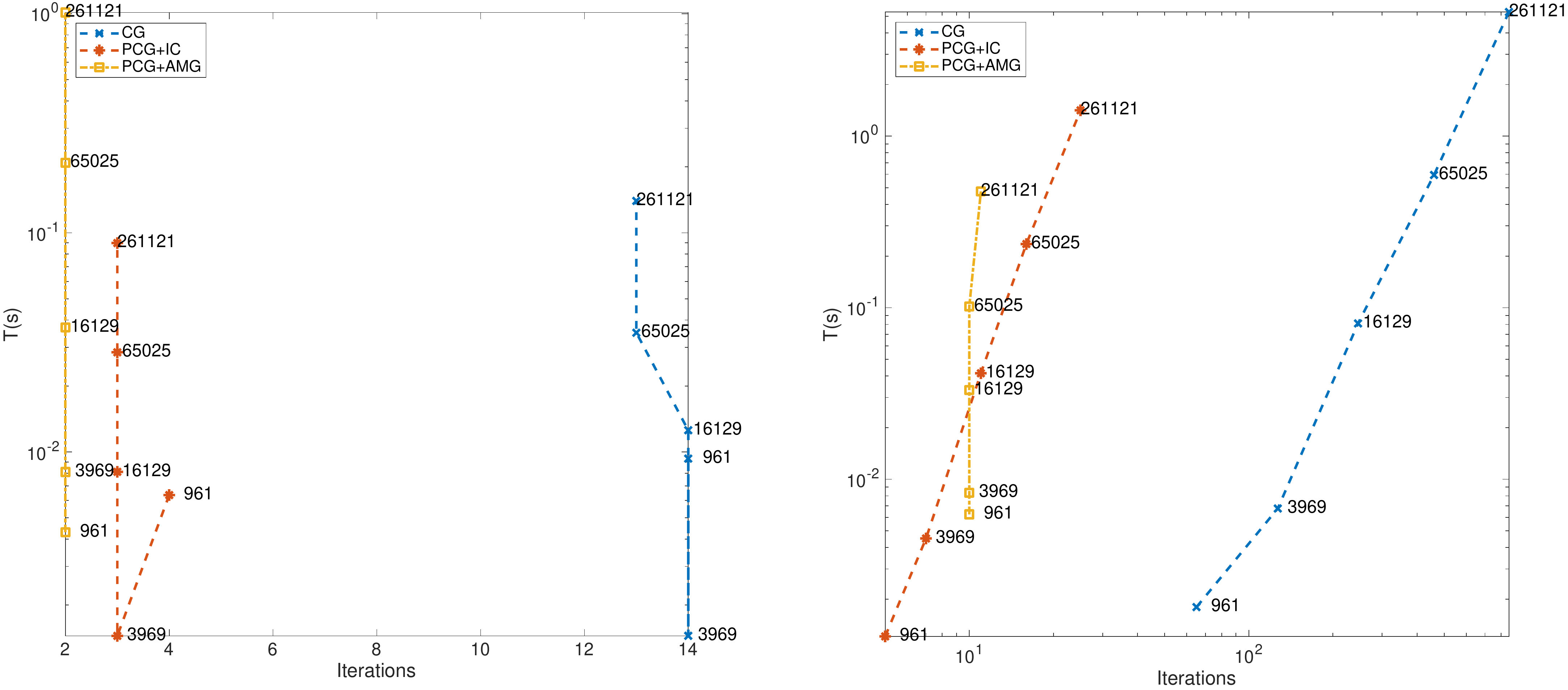}
	\caption{Comparison of solving routines for the auxiliary linear systems, on the left we compare the solution for the system involving the mass matrix $M$, while on the right the comparison is for the Hermitian stiffness matrix $K$. The comparisons do not take into account the building time for the various preconditioner since it is then distributed among the repeated solution.  {The maximum number of allowed PCG iterations is the size of the problem, while the stopping criterion {on the relative residual is set to a tolerance of~\texttt{1e-8}}}}\label{fig:innersolver}
\end{figure}
Nevertheless, as we discuss in the next Section~\ref{sec:numerical_experiments}, the time--efficiency in the auxiliary solve it is not so crucial, observe that already the direct method gives acceptable results under this aspect. What really matters is the combination of the achieved accuracy of the auxiliary solve with the presence, and the possible accumulation, of the $\alpha$ factor in the right--hand side of the auxiliary linear systems. This will cause for their solution by a direct method to return better performances for the lowest value of $\alpha$.

\section{Numerical Examples}
\label{sec:numerical_experiments}

In this section we test the application of the preconditioners analyzed in Section~\ref{sec:efficientsolution} on some test problems. All the numerical tests
are made on a laptop running Linux with 8~Gb
memory and CPU Intel\textsuperscript{\textregistered} Core\texttrademark\ i7--4710HQ CPU with clock 2.50~GHz and MATLAB version 9.4.0.813654 (R2018a). We recall again that all the relevant matrices and right--hand sides are generated by means of the FEniCS library (v.2018.1.0)~\cite{AlnaesBlechta2015a,LoggOlgaardEtAl2012a}; see again Section~\ref{sec:discretization} for the details.

We test the solution procedure with the un--restarted GMRES method set to achieve a tolerance on the residual of \texttt{tol = 1e-6}, and a maximum number of iteration \texttt{maxit = 100}, and measure the number of iterations, and the timings in second. As test problem we consider an instance of a Poisson control problem~\eqref{eq:theproblem}, and one with the diffusion--advection--reaction constraint from Section~\ref{sec:Study_BN_perm_non_herm}.

\paragraph{Poisson} The first test problem is an instance of the Poisson control problem~\eqref{eq:theproblem}, in which we want to obtain the desired state,
\[y_d(x_1,x_2) = -\sin(8\pi x_1)\sin(8\pi x_2)+\sin(\pi x_1)\sin(\pi x_2),\]
while using the forcing term 
\[z(x_1,x_2) = 2\pi^2\sin(\pi x_1)+\frac{1}{128\pi^2}\sin(8\pi x_1)\sin(8\pi x_2).\]
We test the solution for regularization parameter $\alpha = \texttt{1.0e-03}, \texttt{1.0e-06},\texttt{1.0e-09}$, and collect the results in Table~\ref{tab:poissonproblem}. The  {approximate} preconditioners are applied inside the Flexible--GMRES method as discussed in Section~\ref{sec:approximate_solution}.
\begin{table}[hbtp]
	\centering
	{\begin{tabular}{ll|llllll|llll}
			\toprule
			& & \multicolumn{6}{c}{GMRES} & \multicolumn{4}{c}{FGMRES+PCG+IC} \\
			& & \multicolumn{2}{c}{$I_N$} &  \multicolumn{2}{c}{$\mathcal{P}_N$} &  \multicolumn{2}{c}{$\mathcal{P}_{\text{BCT}}$} &  \multicolumn{2}{c}{$\mathcal{P}_N$} &  \multicolumn{2}{c}{$\mathcal{P}_{\text{BCT}}$} \\
			$\alpha$ & N & IT & T(s) & IT & T(s) & IT & T(s) & IT & T(s) & IT & T(s) \\
			\midrule
			1.0e-03 & 147 & $\dagger$ & - & 3 & 3.0e-03 & 3 & \textbf{2.5e-03} & 3 & 4.4e-03 & 3 & 4.5e-03 \\
			& 675 & $\dagger$ & - & 3 & 6.4e-03 & 3 & \textbf{3.7e-03} & 3 & 4.7e-03 & 3 & 4.6e-03 \\
			& 2883 & $\dagger$ & - & 3 & 1.0e-02 & 3 & 9.9e-03 & 3 & \textbf{7.3e-03} & 3 & \textbf{7.3e-03} \\
			& 11907 & $\dagger$ & - & 2 & 3.1e-02 & 2 & 3.0e-02 & 2 & \textbf{2.3e-02} & 2 & \textbf{2.3e-02} \\
			& 48387 & $\dagger$ & - & 2 & 2.1e-01 & 2 & 1.7e-01 & 2 & \textbf{1.5e-01} & 2 & \textbf{1.5e-01} \\
			& 195075 & $\dagger$ & - & 2 & 9.4e-01 & 2 & 9.0e-01 & 2 & \textbf{7.3e-01} & 2 & 7.4e-01 \\
			& 783363 & $\dagger$ & - & 1 & 2.1e+00 & 1 & \textbf{2.0e+00} & 1 & 2.9e+00 & 1 & 2.9e+00 \\
			\cmidrule{3-12}
			1.0e-06 & 147 & $\dagger$ & - & 15 & 4.3e-03 & 15 & 4.1e-03 & 15 & \textbf{1.5e-03} & 15 & \textbf{1.5e-03} \\
			& 675 & $\dagger$ & - & 14 & 1.3e-02 & 14 & \textbf{1.2e-02} & 14 & 1.7e-02 & 14 & 1.7e-02 \\
			& 2883 & $\dagger$ & - & 9 & 3.0e-02 & 9 & 3.0e-02 & 10 & \textbf{2.4e-02} & 10 & \textbf{2.4e-02} \\
			& 11907 & $\dagger$ & - & 6 & 1.0e-01 & 6 & 9.3e-02 & 6 & \textbf{7.0e-02} & 6 & 7.4e-02 \\
			& 48387 & $\dagger$ & - & 4 & 3.1e-01 & 4 & 3.0e-01 & 4 & 2.5e-01 & 4 & \textbf{2.1e-01} \\
			& 195075 & 86 & 3.8e+00 & 2 & 8.7e-01 & 2 & 8.4e-01 & 2 & 7.8e-01 & 2 & \textbf{7.6e-01} \\
			& 783363 & 80 & 3.0e+01 & 2 & \textbf{4.3e+00} & 2 & \textbf{4.3e+00} & 2 & 4.5e+00 & 2 & 4.6e+00 \\
			\cmidrule{3-12}
			1.0e-09 & 147 & $\dagger$ & - & 27 & 9.6e-03 & 27 & 8.7e-03 & 27 & \textbf{3.2e-03} & 27 & 3.4e-03 \\
			& 675 & $\dagger$ & - & 54 & 6.3e-02 & 54 & \textbf{5.8e-02} & 54 & 7.9e-02 & 54 & 7.9e-02 \\
			& 2883 & $\dagger$ & - & 52 & 2.0e-01 & 52 & 2.1e-01 & 52 & \textbf{1.6e-01} & 52 & \textbf{1.6e-01} \\
			& 11907 & $\dagger$ & - & 33 & 5.8e-01 & 33 & 6.1e-01 & 33 & \textbf{5.2e-01} & 33 & 5.5e-01 \\
			& 48387 & $\dagger$ & - & 20 & \textbf{1.5e+00} & 20 & \textbf{1.5e+00} & 43 & 4.8e+00 & 42 & 4.8e+00 \\
			& 195075 & 86 & \textbf{2.8e+00} & 33 & 1.3e+01 & 33 & 1.3e+01 & 37 & 3.0e+01 & 36 & 2.9e+01 \\
			& 783363 & 80 & 3.0e+01 & 33 & \textbf{2.5e+01} & 33 & \textbf{2.5e+01} & $\dagger$ & - & $\dagger$ & - \\
			\bottomrule
	\end{tabular}}
	\caption{Poisson Control Problem. We compare both the number of iterations, and the solution time for the various preconditioners. Best timings are highlighted in bold face. When the method fails to converge, i.e., the method reaches the maximum number of iterations, a $\dagger$ is reported.  {The inner tolerance for the PCG is set to 1e-8}}\label{tab:poissonproblem}
\end{table}
What we observe is that the approximate solution are at an advantage for the higher value of $\alpha$, while perform poorly for the smallest $\alpha = \texttt{1.0e-09}$. We stress that this effect is more  {connected} to the behavior of the accuracy in the computation of the Krylov vectors inside the FGMRES method, than to the optimal behavior of the auxiliary problems. Secondarily, what we observe is indeed the optimal behavior with respect to the iteration discussed in Theorem~\ref{thm:clusteringasymptotic}. Indeed, the preconditioning routine becomes asymptotically better with the size of the problem, i.e., we get fewer iteration for bigger problems. Moreover, the decreasing of the $\alpha$ introduces just a  {latency} effect in the solution, i.e., the asymptotic regimes kicks in for slightly bigger problems when $\alpha$ is smaller,  {we stress that this is exactly the phenomenon described in Remark~\ref{remark:asymptotic_regime_of_generalized_eigenvalues} regarding the asymptotic relation between the value of $h$ going to zero, and the value of $\alpha$ being fixed independently {of} $h$.} { To overcome this limitation, one could decouple the system by neglecting the matrix $\alpha \bar{M}^{-1}$, i.e., the $(2,2)$ block in~\eqref{eq:the_linear_system}, thus obtaining the preconditioner
\begin{equation}\label{eq:decoupledpreconditioner}
	\mathcal{P}_{D} = \left[\begin{array}{cc|c}
	\bar{M} & O & \bar{K}^T \\
	&    & \\
	O & O & - \bar{M} \\
	&    & \\
	[-0.1em]      \hline        &    & \\
	\bar{K} & -\bar{M} & O
	\end{array}\right].
\end{equation}
By computation analogous to the one in Remark~\ref{rmk:alternative_preconditioners}, we find that the non-unit eigenvalues for this preconditioner are the ones of the matrix sequence $\left\lbrace I + \frac{\alpha}{h^4} M^{-1} K M^{-1} K^T \right\rbrace_N$. The non-unit eigenvalues tend to cluster at one whenever $\alpha h^{-4} \propto \alpha N^4 $ goes to zero. This means that $\mathcal{P}_D$ is efficient for small values of $\alpha$ and moderate values of $N$ and worsen for diverging values of $N$ (keeping fixed $\alpha$), indeed this is confirmed by the numerical test in Table~\ref{tab:decoupledpreconditioner}.}
\begin{table}[htbp]
\centering
{
\begin{tabular}{ccccccccc}
	\toprule
	& \multicolumn{8}{c}{GMRES preconditioned by $\mathcal{P}_D$} \\
	\cmidrule{3-9}
	\multirow{3}{*}{$\alpha =$ 	1.0e-09 }& N    & 147 & 675 & 2883 & 11907 & 48387 & 195075 & 783363 \\
	& IT   & 4 & 5 & 6 & $\dagger$ & $\dagger$ & $\dagger$ & $\dagger$  \\
	& T(s) & 1.0e-02 & 5.4e-03 & 1.7e-02 & - & - & - & -  \\
	\bottomrule
\end{tabular}}
\caption{{Poisson Control Problem. We report both the number of iterations, and the solution time for the $\mathcal{P}_D$ preconditioner in~\eqref{eq:decoupledpreconditioner}, compare these entries with the last block of rows of Table~\ref{tab:poissonproblem}}}\label{tab:decoupledpreconditioner}
\end{table}

\paragraph{Diffusion--Convection--Reaction} The second case we consider is the problem~\eqref{eq:thegeneralproblem} in which the costraint $e(y,u)$ is given by the Equation~\eqref{eq:adevection_diffusion_reaction}, with coefficients $r = 1$, and $\mathbf{c} = (2,3)$. The desired state is given by the sum of the two impulses
\begin{equation*}
y_d(x_1,x_2) = \frac{0.5}{0.07\sqrt{2 \pi}}e^{-\frac{(x_1-0.2)^2 + (x_2 - 0.2)^2}{2 (0.07)^2}} + \frac{0.8}{0.05\sqrt{2 \pi}}e^{-\frac{(x_1-0.6)^2 + (x_2 - 0.6)^2}{2 (0.05)^2}},
\end{equation*}
while the forcing term is given by
\begin{equation*}
z(x_1,x_2) = \sin(\pi x_1)\sin(\pi x_2).
\end{equation*}
We test the solution for regularization parameter $\alpha = \texttt{1.0e-03}, \texttt{1.0e-06}, \texttt{1.0e-09}$, and collect the results in Table~\ref{tab:advectiondiffusionreaction}.
\begin{table}[hbtp]
	\centering
	{\begin{tabular}{ll|llllll|llll}
			\toprule
			& & \multicolumn{6}{c}{} & \multicolumn{4}{c}{FGMRES}\\
			& & \multicolumn{6}{c}{GMRES} & \multicolumn{4}{c}{PCG/BiCGstab+IC/ILU} \\
			& & \multicolumn{2}{c}{$I_N$} &  \multicolumn{2}{c}{$\mathcal{P}_N$} &  \multicolumn{2}{c}{$\mathcal{P}_{\text{BCT}}$} &  \multicolumn{2}{c}{$\mathcal{P}_N$} &  \multicolumn{2}{c}{$\mathcal{P}_{\text{BCT}}$} \\
			$\alpha$ & N & IT & T(s) & IT & T(s) & IT & T(s) & IT & T(s) & IT & T(s) \\
			\midrule
			1.0e-03 & 147 & $\dagger$ & - & 5 & 8.7e-01 & 5 & \textbf{4.7e-03} & 5 & 7.7e-03 & 5 & 7.0e-03 \\
			& 675 & $\dagger$ & - & 5 & 9.6e-03 & 5 & 8.7e-03 & 5 & 7.1e-03 & 5 & \textbf{6.4e-03} \\
			& 2883 & $\dagger$ & - & 4 & 7.2e-02 & 4 & 2.7e-02 & 4 & 1.4e-01 & 4 & \textbf{9.5e-03} \\
			& 11907 & $\dagger$ & - & 3 & 8.6e-01 & 3 & 8.9e-02 & 3 & 4.8e-02 & 3 & \textbf{3.4e-02} \\
			& 48387 & $\dagger$ & - & 3 & 1.1e+00 & 3 & 4.4e-01 & 3 & 3.9e-01 & 3 & \textbf{2.5e-01} \\
			& 195075 & $\dagger$ & - & 2 & 1.7e+00 & 2 & 1.7e+00 & 2 & 1.7e+00 & 2 & \textbf{1.1e+00} \\
			& 783363 & $\dagger$ & - & 2 & 8.5e+00 & 2 & 8.9e+00 & 2 & \textbf{7.1e+00} & 2 & 7.4e+00 \\
			\cmidrule{3-12}
			1.0e-06 & 147 & $\dagger$ & - & 24 & 1.5e-02 & 24 & \textbf{1.4e-02} & 24 & 2.9e-02 & 24 & 2.4e-02 \\
			& 675 & $\dagger$ & - & 26 & 4.7e-02 & 26 & 4.6e-02 & 26 & 3.6e-02 & 27 & \textbf{3.3e-02} \\
			& 2883 & $\dagger$ & - & 24 & 1.7e-01 & 24 & 1.6e-01 & 24 & 7.2e-02 & 25 & \textbf{6.0e-02} \\
			& 11907 & $\dagger$ & - & 22 & 6.9e-01 & 22 & 7.0e-01 & 22 & 4.0e-01 & 24 & \textbf{2.9e-01} \\
			& 48387 & $\dagger$ & - & 19 & 2.8e+00 & 19 & 2.8e+00 & 19 & 2.5e+00 & 22 & \textbf{1.9e+00} \\
			& 195075 & $\dagger$ & - & 17 & 1.4e+01 & 17 & 1.4e+01 & 17 & 1.4e+01 & 18 & \textbf{1.2e+01} \\
			& 783363 & $\dagger$ & - & 14 & 5.9e+01 & 14 & 6.1e+01 & 14 & 7.9e+01 & 14 & \textbf{5.9e+01} \\
			\cmidrule{3-12}
			1.0e-09 & 147 & $\dagger$ & - & 38 & 3.8e-02 & 38 & \textbf{3.5e-02} & 38 & 4.2e-02 & 38 & 4.4e-02 \\
			& 675 & $\dagger$ & - & 73 & 1.5e-01 & 73 & 1.6e-01 & 73 & \textbf{1.2e-01} & 87 & 1.4e-01 \\
			& 2883 & $\dagger$ & - & 84 & 6.5e-01 & 73 & 6.5e-01 & 86 & \textbf{3.3e-01} & 73 & 3.7e-01 \\
			& 11907 & $\dagger$ & - & 94 & 3.5e+00 & 94 & 3.4e+00 & 97 & 2.1e+00 & 97 & \textbf{1.9e+00} \\
			& 48387 & $\dagger$ & - & 87 & 1.4e+01 & 87 & 1.4e+01 & 87 & 1.2e+01 & 87 & \textbf{1.1e+01} \\
			& 195075 & $\dagger$ & - & 77 & 6.8e+01 & 77 & 6.8e+01 & $\dagger$ & - & $\dagger$ & - \\
			& 783363 & $\dagger$ & - & 66 & 5.2e+02 & 66 & 5.4e+02 & $\dagger$ & - & $\dagger$ & - \\
			\bottomrule
	\end{tabular}}
	\caption{Diffusion--Convection--Reaction Control Problem. We compare both the number of iterations, and the solution time for the various preconditioners. Best timings are highlighted in bold face. When the method fails to converge, i.e., the method reaches the maximum number of iterations, a $\dagger$ is reported.  {The tolerances for the inner solvers are set to \texttt{1e-8}}}\label{tab:advectiondiffusionreaction}
\end{table}
The results are completely analogous to the one for the Poisson case. We observe a higher number of iteration that is due to the fact that we are using an  {asymptotic} argument both for the sequence $\mathcal{S}_N$, and for its block; see Proposition~\ref{pro:non-hermitian-case},  {and the discussion in Remark~\ref{remark:asymptotic_regime_of_generalized_eigenvalues} for the asymptotic relationship between $h$, and $\alpha$.}

\section{Conclusions and future developments}
\label{sec:conclusion_and_future_developments}

In this paper we have produced a characterization for the saddle--point matrices arising from the application of the discretize--then--optimize approach to quadratic optimization problems with elliptic PDE constraints highlighting the presence of an hidden Generalized Locally Toeplitz structure, i.e., we have proposed an analysis that is sharper and more informative than the one that can be obtained by looking only at the saddle--point structure. We have produced a localization of the spectrum in three intervals, up to a number of outliers infinitesimal in the dimension of the problem, and used this characterization to produce an asymptotically optimal preconditioner, i.e., a preconditioner that is independent of the value of the regularization parameter $\alpha$, and whose performance increases for finer grids.

We plan to extend this analysis  { in order that it can cover more} general constraints, i.e.,  {we would like to }discuss also the case of  {sparse} optimization, and  {bounded} controls. Moreover, the GLT spectral analysis techniques we are using have been recently extended for becoming tools for the  {fast} and  {reliable} computation of generalized eigenvalues see, e.g.,~\cite{MR3816229,MR3894727}, since we have analyzed the structure of the eigenvectors of our preconditioned problems (Proposition~\ref{prop:from_preconditioned_to_eigenvalue_problem}), we plan to investigate the possible application of  {deflation} techniques to further accelerate our iterative methods.

\bigskip
{\bf Acknowledgment.} We are thankful to Prof. S. Serra--Capizzano for the insightful discussions on the spectral distribution results, and to the referee whose suggestion have been extremely helpful in improving the presentation of the material.

\bibliography{saddlespectral}

\begin{thebibliography}{10}

\bibitem{AlnaesBlechta2015a}
Martin~S. Aln{\ae}s, Jan Blechta, Johan Hake, August Johansson, Benjamin
  Kehlet, Anders Logg, Chris Richardson, Johannes Ring, Marie~E. Rognes, and
  Garth~N. Wells.
\newblock {The FEniCS Project Version 1.5}.
\newblock {\em Archive of Numerical Software}, 3(100), 2015.

\bibitem{MR3564863}
Owe Axelsson, Shiraz Farouq, and Maya Neytcheva.
\newblock Comparison of preconditioned {K}rylov subspace iteration methods for
  {PDE}-constrained optimization problems: {P}oisson and convection-diffusion
  control.
\newblock {\em Numer. Algorithms}, 73(3):631--663, 2016.

\bibitem{MR2220678}
Owe Axelsson and Maya Neytcheva.
\newblock Eigenvalue estimates for preconditioned saddle point matrices.
\newblock {\em Numer. Linear Algebra Appl.}, 13(4):339--360, 2006.

\bibitem{MR2785775}
Zhong-Zhi Bai.
\newblock Block preconditioners for elliptic {PDE}-constrained optimization
  problems.
\newblock {\em Computing}, 91(4):379--395, 2011.

\bibitem{benzi2005numerical}
Michele Benzi, Gene~H. Golub, and J\"{o}rg Liesen.
\newblock Numerical solution of saddle point problems.
\newblock {\em Acta Numer.}, 14:1--137, 2005.

\bibitem{benzi2006eigenvalues}
Michele Benzi and Valeria Simoncini.
\newblock On the eigenvalues of a class of saddle point matrices.
\newblock {\em Numer. Math.}, 103(2):173--196, 2006.

\bibitem{bergamaschi2012eigenvalue}
Luca Bergamaschi.
\newblock On eigenvalue distribution of constraint-preconditioned symmetric
  saddle point matrices.
\newblock {\em Numer. Linear Algebra Appl.}, 19(4):754--772, 2012.

\bibitem{MR3793630}
Daniele Bertaccini and Fabio Durastante.
\newblock {\em Iterative methods and preconditioning for large and sparse
  linear systems with applications}.
\newblock Monographs and Research Notes in Mathematics. CRC Press, Boca Raton,
  FL, 2018.

\bibitem{MR2322235}
Dietrich Braess.
\newblock {\em Finite elements}.
\newblock Cambridge University Press, Cambridge, third edition, 2007.
\newblock Theory, fast solvers, and applications in elasticity theory,
  Translated from the German by Larry L. Schumaker.

\bibitem{MR3711990}
Stefano Cipolla and Fabio Durastante.
\newblock Fractional {PDE} constrained optimization: an
  optimize-then-discretize approach with {L}-{BFGS} and approximate inverse
  preconditioning.
\newblock {\em Appl. Numer. Math.}, 123:43--57, 2018.

\bibitem{MR3689933}
Marco Donatelli, Ali Dorostkar, Mariarosa Mazza, Maya Neytcheva, and Stefano
  Serra-Capizzano.
\newblock Function-based block multigrid strategy for a two-dimensional linear
  elasticity-type problem.
\newblock {\em Comput. Math. Appl.}, 74(5):1015--1028, 2017.

\bibitem{cipolladurastante}
Fabio Durastante and Stefano Cipolla.
\newblock Fractional {PDE} constrained optimization: box and sparse constrained
  problems.
\newblock In {\em {Numerical Methods for Optimal Control Problems}}, chapter~6,
  pages 111--135. {Springer International Publishing}, Cham, 2018.

\bibitem{MR3816229}
Sven-Erik Ekstr\"{o}m, Isabella Furci, and Stefano Serra-Capizzano.
\newblock Exact formulae and matrix-less eigensolvers for block banded
  symmetric {T}oeplitz matrices.
\newblock {\em BIT}, 58(4):937--968, 2018.

\bibitem{MR3894727}
Sven-Erik Ekstr\"{o}m, Carlo Garoni, and Stefano Serra-Capizzano.
\newblock Are the {E}igenvalues of {B}anded {S}ymmetric {T}oeplitz {M}atrices
  {K}nown in {A}lmost {C}losed {F}orm?
\newblock {\em Exp. Math.}, 27(4):478--487, 2018.

\bibitem{axioms7030049}
Carlo Garoni, Mariarosa Mazza, and Stefano Serra-Capizzano.
\newblock Block generalized locally toeplitz sequences: From the theory to the
  applications.
\newblock {\em Axioms}, 7(3), 2018.

\bibitem{SerraLibro1}
Carlo Garoni and Stefano Serra-Capizzano.
\newblock {\em Generalized locally {T}oeplitz sequences: theory and
  applications. {V}ol. {I}}.
\newblock Springer, Cham, 2017.

\bibitem{SerraLibro2}
Carlo Garoni and Stefano Serra-Capizzano.
\newblock {\em Generalized locally {T}oeplitz sequences: theory and
  applications. {V}ol. {II}}.
\newblock Springer, Cham, 2018.

\bibitem{gould2009spectral}
N.~I.~M. Gould and V.~Simoncini.
\newblock Spectral analysis of saddle point matrices with indefinite leading
  blocks.
\newblock {\em SIAM J. Matrix Anal. Appl.}, 31(3):1152--1171, 2009.

\bibitem{GSz}
Ulf Grenander and G\'{a}bor Szeg\H{o}.
\newblock {\em Toeplitz forms and their applications}.
\newblock Chelsea Publishing Co., New York, second edition, 1984.

\bibitem{MR3787487}
Yi-Fen Ke and Chang-Feng Ma.
\newblock Some preconditioners for elliptic {PDE}-constrained optimization
  problems.
\newblock {\em Comput. Math. Appl.}, 75(8):2795--2813, 2018.

\bibitem{LoggOlgaardEtAl2012a}
Anders Logg, B.~Kristian \O{}lgaard, Marie~E. Rognes, and Garth N.~Wells.
\newblock {FFC: the FEniCS Form Compiler}.
\newblock In Anders Logg, Kent-Andre Mardal, and Garth~N. Wells, editors, {\em
  Automated Solution of Differential Equations by the Finite Element Method,
  Volume 84 of Lecture Notes in Computational Science and Engineering},
  chapter~11. Springer, 2012.

\bibitem{murphy2000note}
Malcolm~F. Murphy, Gene~H. Golub, and Andrew~J. Wathen.
\newblock A note on preconditioning for indefinite linear systems.
\newblock {\em SIAM J. Sci. Comput.}, 21(6):1969--1972, 2000.

\bibitem{MR1802361}
I.~Perugia and V.~Simoncini.
\newblock Block-diagonal and indefinite symmetric preconditioners for mixed
  finite element formulations.
\newblock {\em Numer. Linear Algebra Appl.}, 7(7-8):585--616, 2000.
\newblock Preconditioning techniques for large sparse matrix problems in
  industrial applications (Minneapolis, MN, 1999).

\bibitem{MR3187670}
J.~Pestana and A.~J. Wathen.
\newblock The antitriangular factorization of saddle point matrices.
\newblock {\em SIAM J. Matrix Anal. Appl.}, 35(2):339--353, 2014.

\bibitem{MR2759604}
Tyrone Rees and Martin Stoll.
\newblock Block-triangular preconditioners for {PDE}-constrained optimization.
\newblock {\em Numer. Linear Algebra Appl.}, 17(6):977--996, 2010.

\bibitem{MR1168084}
Torgeir Rusten and Ragnar Winther.
\newblock A preconditioned iterative method for saddlepoint problems.
\newblock {\em SIAM J. Matrix Anal. Appl.}, 13(3):887--904, 1992.
\newblock Iterative methods in numerical linear algebra (Copper Mountain, CO,
  1990).

\bibitem{locspsb1}
Stefano Serra.
\newblock Asymptotic results on the spectra of block {T}oeplitz preconditioned
  matrices.
\newblock {\em SIAM J. Matrix Anal. Appl.}, 20(1):31--44, 1999.

\bibitem{glt1}
S.~Serra-Capizzano.
\newblock Generalized locally {T}oeplitz sequences: spectral analysis and
  applications to discretized partial differential equations.
\newblock {\em Linear Algebra Appl.}, 366:371--402, 2003.
\newblock Special issue on structured matrices: analysis, algorithms and
  applications (Cortona, 2000).

\bibitem{glt2}
Stefano Serra-Capizzano.
\newblock The {GLT} class as a generalized {F}ourier analysis and applications.
\newblock {\em Linear Algebra Appl.}, 419(1):180--233, 2006.

\bibitem{MR2176808}
Stefano Serra-Capizzano, Daniele Bertaccini, and Gene~H. Golub.
\newblock How to deduce a proper eigenvalue cluster from a proper singular
  value cluster in the nonnormal case.
\newblock {\em SIAM J. Matrix Anal. Appl.}, 27(1):82--86, 2005.

\bibitem{sesana2013spectral}
Debora Sesana and Valeria Simoncini.
\newblock Spectral analysis of inexact constraint preconditioning for symmetric
  saddle point matrices.
\newblock {\em Linear Algebra Appl.}, 438(6):2683--2700, 2013.

\bibitem{Tillinota}
Paolo Tilli.
\newblock A note on the spectral distribution of {T}oeplitz matrices.
\newblock {\em Linear and Multilinear Algebra}, 45(2-3):147--159, 1998.

\bibitem{troltzsch2010optimal}
Fredi Tr\"{o}ltzsch.
\newblock {\em Optimal control of partial differential equations}, volume 112
  of {\em Graduate Studies in Mathematics}.
\newblock American Mathematical Society, Providence, RI, 2010.
\newblock Theory, methods and applications, Translated from the 2005 German
  original by J\"{u}rgen Sprekels.

\end{thebibliography}

\end{document}